\theoremstyle{plain}
\newtheorem{thm}{Theorem}[section]
\newtheorem{cor}{Corollary}[section]
\newtheorem{lem}{Lemma}[section]
\newtheorem{prop}{Proposition}[section]
\theoremstyle{definition}
\newtheorem{defn}{Definition}[section]
\theoremstyle{remark}
\newtheorem{rem}{Remark}[section]
\newtheorem{ex}{Example}[section]
\definecolor{afcol}{rgb}{1,0,0}
\providecommand{\keywords}[1]{\textbf{\textit{Keywords:}} #1}
\begin{document}

\title{Weighted fractional calculus: a general class of operators}

\date{}

\author[1]{Arran Fernandez\thanks{Email: \texttt{arran.fernandez@emu.edu.tr}}}
\author[1]{Hafiz Muhammad Fahad\thanks{Email: \texttt{hafizmuhammadfahad13@gmail.com}}}

\affil[1]{{\small Department of Mathematics, Faculty of Arts and Sciences, Eastern Mediterranean University, Famagusta, Northern Cyprus, via Mersin 10, Turkey}}

\maketitle

\begin{abstract}
The operators of fractional calculus come in many different types, which can be categorised into general classes according to their nature and properties. We conduct a formal study of the class known as weighted fractional calculus and its extension to the larger class known as weighted fractional calculus with respect to functions. These classes contain tempered, Hadamard-type, and Erd\'elyi--Kober operators as special cases, and in general they can be related to the classical Riemann--Liouville fractional calculus via conjugation relations. Considering the corresponding modifications of the Laplace transform and convolution operations enables differential equations to be solved in the setting of these general classes of operators.
\end{abstract}

\keywords{fractional calculus; operational calculus; algebraic conjugation; fractional differential equations; fractional calculus with respect to functions; weighted fractional calculus}

\section{Introduction}

The basic operators of calculus, namely differentiation and integration, can be generalised by allowing their order (number of repetitions) to roam outside of $\mathbb{Z}$ to more general domains such as $\mathbb{R}$ and $\mathbb{C}$. The study of such generalised operators is called fractional calculus, and this field itself has various possible levels of generality. The classical fractional derivatives and integrals, defined according to the Riemann--Liouville model, are no longer the only definitions in the literature; in the 21st century, so many new definitions have been proposed that it has become necessary to categorise them into general classes of operators for mathematical study.

Several textbooks from the late 20th century \cite{miller-ross,oldham-spanier,samko-kilbas-marichev} give the detailed theory of Riemann--Liouville and some other classical definitions of fractional integrals and derivatives. The more recent definitions are too numerous to list here, but we refer to \cite{hilfer-luchko,teodoro-machado-oliveira} for some surveys of examples, and to \cite{baleanu-fernandez} for a description of the philosophy of classification of operators in fractional calculus.

One example of a general class of operators is given by fractional calculus with analytic kernel functions \cite{fernandez-ozarslan-baleanu}, defined in 2019, which can be related to the classical Riemann--Liouville operators via infinite series. Another example is given by fractional calculus with respect to functions, first theorised by Osler in 1970 \cite{osler} and later studied in more detail in the textbooks \cite[\S18.2]{samko-kilbas-marichev} and \cite[\S2.5]{kilbas-srivastava-trujillo}. The operators in this class can be related to the classical Riemann--Liouville operators via an algebraic conjugation relation, which we describe in more detail below. The two general classes just described can also be merged into a larger class, fractional calculus with analytic kernels with respect to functions \cite{oumarou-fahad-djida-fernandez}, which is large enough to contain as special cases all of the operators in both of the above classes, as well as others such as the Hadamard-type fractional calculus which falls into neither of the above classes.

Another general class of operators, studied by Agrawal \cite{agrawal1,agrawal2} in 2012, is known as weighted (or scaled) fractional calculus with respect to functions, and its applications in variational calculus \cite{agrawal2} and probabilistic modelling \cite{kolokoltsov} have been thoroughly explored. This can also be seen as a merger of two classes, namely fractional calculus with respect to functions (as mentioned above) and weighted fractional calculus (to be defined below). A few recent papers have also studied mathematical properties of these weighted operators \cite{alrefai,jarad-abdeljawad-shah} and of the associated differential equations \cite{abdo-abdeljawad-ali-shah-jarad,bayrak-demir-ozbilge,liu-yang-feng-geng}.

Noticing this new pattern emerging in the literature, the authors were inspired to conduct a detailed study of the so-called weighted fractional calculus, in the spirit of Agrawal \cite{agrawal1,agrawal2} and Kolokoltsov \cite{kolokoltsov} who appreciated its operational properties, in order to understand its mathematical structure and provide results to make future studies much easier for anyone using these operators. In particular, by observing that a conjugation formula (similar to the one for fractional calculus with respect to functions) relates the operators of weighted fractional calculus with those of Riemann--Liouville fractional calculus, we can immediately deduce many results in the weighted theory from classical results on Riemann--Liouville.

The results and discussion of the current work will be structured as follows. Section \ref{Sec:prelim} consists of preliminary definitions and results concerning the classical fractional calculus and fractional calculus with respect to functions. Section \ref{Sec:WFC} consists of a detailed study of weighted fractional calculus, divided into subsections devoted to conjugation relations and their consequences, examples including the tempered and Kober--Erd\'elyi fractional calculi, and weighted versions of Laplace transforms and convolutions. Section \ref{Sec:WFCwrtf} extends the results of the previous section to the larger class of operators given by weighted fractional calculus with respect to functions; this larger class can again be characterised by conjugation relations, and it includes examples such as the Hadamard-type and Erd\'elyi--Kober fractional calculi. In Section \ref{Sec:concl} we conclude with a summary and some pointers for future work on weighted fractional calculus.

\section{Preliminaries} \label{Sec:prelim}

We firstly provide the definitions of the classical fractional integrals and derivatives of Riemann--Liouville and Caputo, then state some basic composition results and examples.

\begin{defn}[\cite{miller-ross,oldham-spanier,samko-kilbas-marichev,diethelm}] \label{Def:RL&C}
The Riemann--Liouville (RL) fractional integral of a function $f\in L^1(a,b)$, to order $\alpha$ in $\mathbb{R}$ or $\mathbb{C}$, is defined as
\begin{equation}
\label{RL:int}
\prescript{RL}{a}I^{\alpha}_xf(x)=\frac{1}{\Gamma(\alpha)}\int_a^x(x-t)^{\alpha-1}f(t)\,\mathrm{d}t,\qquad x\in(a,b),
\end{equation}
where we require $\mathrm{Re}(\alpha)>0$, or simply $\alpha>0$ if we assume real order.

Associated with this integral operator are two possible fractional derivative operators, named respectively after Riemann--Liouville (RL) and Caputo:
\begin{align}
\prescript{RL}{a}D^{\alpha}_xf(x)&=\frac{\mathrm{d}^n}{\mathrm{d}x^n}\Big(\prescript{RL}{a}I^{n-\alpha}_xf(x)\Big),\qquad x\in(a,b), \label{RL:der} \\
\prescript{C}{a}D^{\alpha}_xf(x)&=\prescript{RL}{a}I^{n-\alpha}_x\left(\frac{\mathrm{d}^n}{\mathrm{d}x^n}f(x)\right),\qquad x\in(a,b), \label{CAP:der}
\end{align}
where this time $\mathrm{Re}(\alpha)\geq0$, or simply $\alpha>0$ if we assume real order, and $f\in AC^n(a,b)$ where $n:=\lfloor\mathrm{Re}(\alpha)\rfloor+1$ so that $n-1\leq\mathrm{Re}(\alpha)<n$.

It is important to be aware that the Riemann--Liouville derivative \eqref{RL:der} is the analytic continuation of the Riemann--Liouville integral \eqref{RL:int} in the complex variable $\alpha$, under the convention that integrals of negative order are derivatives of positive order:
\begin{equation}
\label{RL:analcont}
\prescript{RL}{a}D^{\alpha}_xf(x)=\prescript{RL}{a}I^{-\alpha}_xf(x),\qquad\mathrm{Re}(\alpha)\geq0.
\end{equation}
This fact allows us to define both $\prescript{RL}{a}I^{\alpha}_xf(x)$ and $\prescript{RL}{a}D^{\alpha}_xf(x)$ for all values of $\alpha\in\mathbb{C}$, and apply principles of analytic continuation to extend many results from fractional integrals to fractional derivatives. In general, we may use the term ``differintegral'' to cover fractional integral and derivative operators together.
\end{defn}

\begin{lem}[\cite{oldham-spanier,samko-kilbas-marichev}] \label{Lem:RLsemigroup}
Riemann--Liouville differintegrals have semigroup properties as follows:
\begin{align*}
\prescript{RL}{a}I^{\alpha}_x\prescript{RL}{a}I^{\beta}_xf(x)&=\prescript{RL}{a}I^{\alpha+\beta}_xf(x),\qquad\alpha\in\mathbb{C},\mathrm{Re}(\beta)>0; \\
\frac{\mathrm{d}^n}{\mathrm{d}x^n}\prescript{RL}{a}D^{\alpha}_xf(x)&=\prescript{RL}{a}D^{\alpha+n}_xf(x),\qquad\alpha\in\mathbb{C},n\in\mathbb{N},
\end{align*}
where in both cases $f$ is any function such that the relevant expressions are well-defined.

Note that the operators labelled by $\alpha$ in both of these relations may be either fractional integrals or fractional derivatives, while the one labelled by $\beta$ must be a fractional integral.
\end{lem}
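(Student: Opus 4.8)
The plan is to prove the two semigroup relations separately, establishing each first in a range of orders where the operators are genuine fractional integrals and then extending by analytic continuation in the order parameter.

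For the first relation, I would begin with the base case $\mathrm{Re}(\alpha)>0$ and $\mathrm{Re}(\beta)>0$, where both operators are genuine fractional integrals given by the convergent integral \eqref{RL:int}. Writing out the composition as an iterated integral and applying Fubini's theorem to interchange the order of integration, the inner integral $\int_t^x (x-s)^{\alpha-1}(s-t)^{\beta-1}\,\mathrm{d}s$ becomes, after the substitution $s=t+u(x-t)$, a Beta-function integral equal to $(x-t)^{\alpha+\beta-1}B(\beta,\alpha)$. Using $B(\beta,\alpha)=\Gamma(\alpha)\Gamma(\beta)/\Gamma(\alpha+\beta)$ then collapses the expression precisely to $\prescript{RL}{a}I^{\alpha+\beta}_xf(x)$. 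To remove the restriction $\mathrm{Re}(\alpha)>0$ and thereby allow the $\alpha$-operator to be a derivative, I would fix $\beta$ with $\mathrm{Re}(\beta)>0$ and regard both sides of the identity as functions of the complex variable $\alpha$. By the analytic continuation principle recorded in \eqref{RL:analcont}, the differintegral $\prescript{RL}{a}I^{\alpha}_x$ of a fixed function depends holomorphically on $\alpha$; hence, setting $g=\prescript{RL}{a}I^{\beta}_xf$, both the left-hand side $\prescript{RL}{a}I^{\alpha}_xg$ and the right-hand side $\prescript{RL}{a}I^{\alpha+\beta}_xf$ are holomorphic in $\alpha$ on the domain where they are defined. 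Since they agree on the half-plane $\mathrm{Re}(\alpha)>0$ by the base case, the identity theorem forces them to agree for all admissible $\alpha\in\mathbb{C}$.

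For the second relation, I would argue directly from the definition \eqref{RL:der}. Writing $m=\lfloor\mathrm{Re}(\alpha)\rfloor+1$, so that $\prescript{RL}{a}D^{\alpha}_xf=\frac{\mathrm{d}^m}{\mathrm{d}x^m}\prescript{RL}{a}I^{m-\alpha}_xf$, composing with $\frac{\mathrm{d}^n}{\mathrm{d}x^n}$ and using additivity of integer-order differentiation yields $\frac{\mathrm{d}^{m+n}}{\mathrm{d}x^{m+n}}\prescript{RL}{a}I^{m-\alpha}_xf$. On the other hand, since $\lfloor\mathrm{Re}(\alpha+n)\rfloor+1=m+n$, the definition \eqref{RL:der} applied to order $\alpha+n$ gives exactly $\prescript{RL}{a}D^{\alpha+n}_xf=\frac{\mathrm{d}^{m+n}}{\mathrm{d}x^{m+n}}\prescript{RL}{a}I^{(m+n)-(\alpha+n)}_xf=\frac{\mathrm{d}^{m+n}}{\mathrm{d}x^{m+n}}\prescript{RL}{a}I^{m-\alpha}_xf$, which matches. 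The remaining case in which $\alpha$ itself labels an integral (so $\mathrm{Re}(\alpha)<0$) can be folded in either by the same analytic-continuation argument or by appealing to the first relation together with the elementary fact $\frac{\mathrm{d}}{\mathrm{d}x}\prescript{RL}{a}I^{\gamma}_xf=\prescript{RL}{a}I^{\gamma-1}_xf$ iterated $n$ times.

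The main obstacle I anticipate is not the computation but the rigorous justification of the analytic-continuation step: one must confirm that, for the chosen class of admissible $f$, the differintegral really is holomorphic in the order $\alpha$ on a connected open set, and that the region of well-definedness where the two sides are compared contains the half-plane $\mathrm{Re}(\alpha)>0$ as an open subset so that the identity theorem genuinely applies. The Fubini interchange in the base case also requires a brief absolute-integrability check, but for $f\in L^1(a,b)$ with $\mathrm{Re}(\alpha),\mathrm{Re}(\beta)>0$ this is routine.
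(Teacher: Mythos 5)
The paper gives no proof of this lemma: it is quoted as a known preliminary result, cited to the textbooks of Oldham--Spanier and Samko--Kilbas--Marichev, so there is no in-paper argument to compare against. Your proposal is correct and is essentially the standard proof found in those cited references --- the Fubini/Beta-function computation for $\mathrm{Re}(\alpha)>0$, $\mathrm{Re}(\beta)>0$, extension in $\alpha$ via the analytic-continuation convention \eqref{RL:analcont}, and, for the derivative relation, direct unwinding of the Riemann--Liouville derivative definition using $\lfloor\mathrm{Re}(\alpha+n)\rfloor+1=\lfloor\mathrm{Re}(\alpha)\rfloor+1+n$ --- so nothing needs correcting.
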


\begin{lem}[\cite{samko-kilbas-marichev,diethelm}]
\label{Lem:RLcompos}
The following composition properties are valid for Riemann--Liouville and Caputo differintegrals in cases where semigroup properties are not:
\begin{align*}
\prescript{RL}{a}I^{\alpha}_x\prescript{RL}{a}D^{\alpha}_xf(x)&=f(x)-\sum_{k=1}^n\frac{(x-a)^{\alpha-k}}{\Gamma(\alpha-k+1)}\cdot\lim_{x\rightarrow a^+}\prescript{RL}{a}D^{\alpha-k}_xf(x); \\
\prescript{RL}{a}I^{\alpha}_x\prescript{C}{a}D^{\alpha}_xf(x)&=f(x)-\sum_{k=0}^{n-1}\frac{(x-a)^k}{k!}\cdot\lim_{x\rightarrow a^+}\frac{\mathrm{d}^k}{\mathrm{d}x^k}f(x),
\end{align*}
where in both cases $\alpha\in\mathbb{C}$ with $\mathrm{Re}(\alpha)>0$ and $n=\lfloor\mathrm{Re}(\alpha)\rfloor+1$ while $f$ is any function such that the relevant expressions are well-defined. We also have the following relationship between the Riemann--Liouville and Caputo derivatives:
\begin{align*}
\prescript{C}{a}D^{\alpha}_xf(x)&=\prescript{RL}{a}D^{\alpha}_xf(x)-\sum_{k=0}^{n-1}\frac{(x-a)^{k-\alpha}}{\Gamma(k-\alpha+1)}\cdot\lim_{x\rightarrow a^+}\frac{\mathrm{d}^k}{\mathrm{d}x^k}f(x) \\
&=\prescript{RL}{a}D^{\alpha}_x\left(f(x)-\sum_{k=0}^{n-1}\frac{(x-a)^k}{k!}\cdot\lim_{x\rightarrow a^+}\frac{\mathrm{d}^k}{\mathrm{d}x^k}f(x)\right),
\end{align*}
where $f\in AC^n(a,b)$ and $\alpha,n$ are as before.
\end{lem}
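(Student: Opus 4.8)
The plan is to derive all four identities from a single integration-by-parts reduction formula together with the semigroup properties of Lemma \ref{Lem:RLsemigroup}. The key auxiliary computation is that, for $g\in AC^n(a,b)$ and a suitable order $\mu$,
\begin{equation*}
\prescript{RL}{a}I^{\mu}_x g^{(n)}(x)=\prescript{RL}{a}I^{\mu-n}_x g(x)-\sum_{k=0}^{n-1}\frac{(x-a)^{\mu-n+k}}{\Gamma(\mu-n+k+1)}\,g^{(k)}(a),
\end{equation*}
which I would obtain by integrating $\frac{1}{\Gamma(\mu)}\int_a^x(x-t)^{\mu-1}g'(t)\,\mathrm{d}t$ by parts once, producing the boundary contribution $-\frac{(x-a)^{\mu-1}}{\Gamma(\mu)}g(a)$ and the reduced integral $\prescript{RL}{a}I^{\mu-1}_x g(x)$, and then iterating this step $n$ times. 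Throughout, $\prescript{RL}{a}I^{\mu-n}_x$ is read as a differintegral via the analytic-continuation convention \eqref{RL:analcont}.

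For the first identity I would set $g=\prescript{RL}{a}I^{n-\alpha}_x f$, so that $g^{(n)}=\prescript{RL}{a}D^{\alpha}_x f$ by \eqref{RL:der}, and apply the reduction formula with $\mu=\alpha$. The leading term becomes $\prescript{RL}{a}I^{\alpha-n}_x\prescript{RL}{a}I^{n-\alpha}_x f=f$ by the semigroup property, the inner operator being an integral of positive order $\mathrm{Re}(n-\alpha)>0$. For the boundary terms I would use the second semigroup relation of Lemma \ref{Lem:RLsemigroup} to identify $g^{(k)}=\frac{\mathrm{d}^k}{\mathrm{d}x^k}\prescript{RL}{a}D^{\alpha-n}_x f=\prescript{RL}{a}D^{\alpha-n+k}_x f$; re-indexing by $j=n-k$ then turns the sum into exactly $\sum_{j=1}^n\frac{(x-a)^{\alpha-j}}{\Gamma(\alpha-j+1)}\lim_{x\to a^+}\prescript{RL}{a}D^{\alpha-j}_x f(x)$, as claimed.

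The remaining identities follow quickly. For the second, I would write $\prescript{RL}{a}I^{\alpha}_x\prescript{C}{a}D^{\alpha}_x f=\prescript{RL}{a}I^{\alpha}_x\prescript{RL}{a}I^{n-\alpha}_x f^{(n)}=\prescript{RL}{a}I^{n}_x f^{(n)}$ by the semigroup property, and the classical $n$-fold iterated-integral (Taylor remainder) formula gives $f-\sum_{k=0}^{n-1}\frac{(x-a)^k}{k!}f^{(k)}(a)$. The first RL--Caputo relation is the reduction formula applied directly to $\prescript{C}{a}D^{\alpha}_x f=\prescript{RL}{a}I^{n-\alpha}_x f^{(n)}$, i.e.\ with $\mu=n-\alpha$ and $g=f$, using $\prescript{RL}{a}I^{-\alpha}_x f=\prescript{RL}{a}D^{\alpha}_x f$ for the leading term. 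Finally, the second RL--Caputo relation follows by applying the power rule $\prescript{RL}{a}D^{\alpha}_x(x-a)^k=\frac{k!}{\Gamma(k-\alpha+1)}(x-a)^{k-\alpha}$ (itself an analytic continuation of the power rule for RL integrals) to each subtracted monomial and comparing with the first relation.

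The main obstacle is rigour in the integration by parts: at intermediate steps the reduced order $\mu-j$ may have nonpositive real part, so the vanishing of the boundary term at $t=x$ and the convergence of the reduced integral are not literally justified by the classical integral \eqref{RL:int}. I would handle this in the standard way, by first establishing each identity for $\mathrm{Re}(\alpha)$ large enough that every integral converges absolutely and every boundary term at $t=x$ genuinely vanishes, and then extending to all admissible $\alpha$ by analytic continuation in $\alpha$, both sides being analytic in view of \eqref{RL:analcont}. Care is also needed that the limits $\lim_{x\to a^+}$ appearing in the boundary terms exist, which is guaranteed by the assumed $AC^n(a,b)$ regularity of $f$.
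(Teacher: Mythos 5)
The paper never proves this lemma: it is quoted as a known preliminary result from \cite{samko-kilbas-marichev,diethelm}, so there is no internal proof to compare against. Your route --- a single integration-by-parts reduction formula, combined with the semigroup property of Lemma \ref{Lem:RLsemigroup}, the Taylor formula with integral remainder, and the power rule of Lemma \ref{Lem:RLCspecfunc} --- is essentially the standard textbook derivation, and your algebra checks out: the reduction formula is correct, the identification $g^{(k)}=\frac{\mathrm{d}^k}{\mathrm{d}x^k}\prescript{RL}{a}D^{\alpha-n}_xf=\prescript{RL}{a}D^{\alpha-n+k}_xf$ is a legitimate use of Lemma \ref{Lem:RLsemigroup}, and the re-indexings reproduce exactly the four stated identities.

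There is, however, a genuine flaw in the step you use to make the integration by parts rigorous. You propose to ``first establish each identity for $\mathrm{Re}(\alpha)$ large enough \dots then extend to all admissible $\alpha$ by analytic continuation in $\alpha$''. This cannot work as stated, because the identities being proved change their form as $\mathrm{Re}(\alpha)$ crosses integers: the number of correction terms is $n=\lfloor\mathrm{Re}(\alpha)\rfloor+1$. To continue analytically you must fix $n$, and once $n$ is fixed the $n$-term identities are simply false for $\mathrm{Re}(\alpha)>n$ (the correct formulas there have more terms, whose limits are generically divergent), and moreover the defining expressions $\prescript{RL}{a}D^{\alpha}_xf=\frac{\mathrm{d}^n}{\mathrm{d}x^n}\prescript{RL}{a}I^{n-\alpha}_xf$ and $\prescript{C}{a}D^{\alpha}_xf=\prescript{RL}{a}I^{n-\alpha}_x f^{(n)}$ lose their meaning there, since $\mathrm{Re}(n-\alpha)<0$. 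So there is no half-plane of validity from which to continue into the strip $n-1\le\mathrm{Re}(\alpha)<n$. The repair is to shift the continuation one level down, to your auxiliary reduction formula, continuing in the order $\mu$ alone while holding $g\in AC^n$ and the integer $n$ fixed: there the number of boundary terms is $n$ independently of $\mu$ (it counts integrations by parts, not the order), the formula is literally valid for $\mathrm{Re}(\mu)>n$, and both sides are analytic in $\mu$ on $\mathrm{Re}(\mu)>0$ (the term $\prescript{RL}{a}I^{\mu-n}_xg$ by the convention \eqref{RL:analcont}, the finite sum being entire in $\mu$), so the identity theorem extends it to all such $\mu$. Only then substitute $\mu=\alpha$ with $g=\prescript{RL}{a}I^{n-\alpha}_xf$ for the first identity, and $\mu=n-\alpha$ with $g=f$ for the RL--Caputo relation --- both substitutions now lie inside the region where the continued formula holds; this matters especially for the latter case, where $\mathrm{Re}(n-\alpha)\le1$ means not even the first integration by parts is literally valid. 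One further small point: for the first identity, the membership $\prescript{RL}{a}I^{n-\alpha}_xf\in AC^n$ and the existence of the limits $\lim_{x\to a^+}\prescript{RL}{a}D^{\alpha-k}_xf(x)$ are not consequences of $f\in AC^n$ but must be read off from the lemma's hypothesis that ``the relevant expressions are well-defined''. With these corrections your argument is complete.
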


\begin{lem}[\cite{miller-ross,oldham-spanier,diethelm}] \label{Lem:RLCspecfunc}
The Riemann--Liouville and Caputo differintegrals of certain functions are given as follows:
\begin{alignat*}{2}
\prescript{RL}{a}D^{\alpha}_x\Big((x-a)^{\beta}\Big)&=\frac{\Gamma(\beta+1)}{\Gamma(\beta-\alpha+1)}(x-a)^{\beta-\alpha},&&\qquad\alpha\in\mathbb{C},\mathrm{Re}(\beta)>-1; \\
\prescript{C}{a}D^{\alpha}_x\Big(E_{\alpha}\big(\omega(x-a)^{\alpha}\big)\Big)&=\omega E_{\alpha}\big(\omega(x-a)^{\alpha}\big),&&\qquad\omega\in\mathbb{C},\mathrm{Re}(\alpha)>0,
\end{alignat*}
where $E_{\alpha}$ is the Mittag-Leffler function. Note that the operator in the first identity can be either a fractional integral or a fractional derivative, according to the sign of $\mathrm{Re}(\alpha)$.
\end{lem}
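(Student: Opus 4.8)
The statement contains two independent identities, which I would prove by different methods. For the power-function formula, the plan is to establish it first for genuine fractional integrals (i.e.\ when the operator is $\prescript{RL}{a}I^{\gamma}_x$ with $\mathrm{Re}(\gamma)>0$) by direct computation, and then extend to the derivative case by analytic continuation in the order parameter, exactly as licensed by the relation \eqref{RL:analcont}.

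First I would substitute $f(t)=(t-a)^{\beta}$ into the defining integral \eqref{RL:int} for $\prescript{RL}{a}I^{\gamma}_x$ and change variables by $t=a+(x-a)s$. This turns the integral into
\[
\prescript{RL}{a}I^{\gamma}_x\big((x-a)^{\beta}\big)=\frac{(x-a)^{\beta+\gamma}}{\Gamma(\gamma)}\int_0^1 s^{\beta}(1-s)^{\gamma-1}\,\mathrm{d}s,
\]
which is a Beta integral; evaluating it as $B(\beta+1,\gamma)=\Gamma(\beta+1)\Gamma(\gamma)/\Gamma(\beta+\gamma+1)$ gives the claimed power rule with $\gamma$ in place of $-\alpha$. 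The condition $\mathrm{Re}(\beta)>-1$ is exactly what makes the factor $s^{\beta}$ integrable near $s=0$, so this is where that hypothesis is used. Writing $\gamma=-\alpha$ and appealing to \eqref{RL:analcont} then yields the formula for $\prescript{RL}{a}D^{\alpha}_x$ whenever $\mathrm{Re}(\alpha)<0$. To reach all $\alpha\in\mathbb{C}$, I would observe that both sides are analytic in $\alpha$ for fixed $\beta$ with $\mathrm{Re}(\beta)>-1$ (the right-hand side because $1/\Gamma$ is entire), so that agreement on the half-plane $\mathrm{Re}(\alpha)<0$ forces agreement everywhere by the identity theorem.

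For the Mittag-Leffler identity the plan is to expand $E_{\alpha}$ as its defining power series $E_{\alpha}(z)=\sum_{k=0}^{\infty}z^k/\Gamma(\alpha k+1)$, so that
\[
E_{\alpha}\big(\omega(x-a)^{\alpha}\big)=\sum_{k=0}^{\infty}\frac{\omega^k}{\Gamma(\alpha k+1)}(x-a)^{\alpha k},
\]
and then apply $\prescript{C}{a}D^{\alpha}_x$ term by term. The $k=0$ term is constant and is annihilated by the Caputo derivative. For each $k\geq1$ the exponent $\alpha k$ exceeds $n-1$, so the Taylor correction in Lemma \ref{Lem:RLcompos} vanishes and the Caputo derivative of $(x-a)^{\alpha k}$ coincides with its Riemann--Liouville derivative, which the power rule established above evaluates as $\frac{\Gamma(\alpha k+1)}{\Gamma(\alpha(k-1)+1)}(x-a)^{\alpha(k-1)}$. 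Substituting this and reindexing by $j=k-1$ collapses the series back into $\omega E_{\alpha}(\omega(x-a)^{\alpha})$, which is the assertion.

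The main obstacle I anticipate is justifying the term-by-term action of the operator on the infinite series: one must interchange $\prescript{C}{a}D^{\alpha}_x$ with $\sum_k$, which requires showing that the differintegrated series converges locally uniformly. Since $E_{\alpha}$ is entire and the power rule multiplies the $k$-th coefficient by a ratio of Gamma functions of controlled growth, I expect this to follow from a standard Weierstrass-type estimate on compact subsets of $(a,b)$, but it is the step that needs genuine care rather than routine symbol-pushing. A secondary point worth flagging is the boundary behaviour when $\alpha$ is a positive integer, where the convention $n=\lfloor\mathrm{Re}(\alpha)\rfloor+1$ must be handled consistently; for non-integer $\alpha$ the argument above goes through cleanly.
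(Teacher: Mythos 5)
The paper never proves this lemma: it is stated as a preliminary imported wholesale from the cited textbooks, so there is no internal argument to measure your proposal against. Judged on its own, your proof is correct and is essentially the standard textbook derivation. The Beta-function computation gives $\prescript{RL}{a}I^{\gamma}_x\big((x-a)^{\beta}\big)=\frac{\Gamma(\beta+1)}{\Gamma(\beta+\gamma+1)}(x-a)^{\beta+\gamma}$ for $\mathrm{Re}(\gamma)>0$, and since the right-hand side is entire in $\gamma$, it is (by uniqueness of analytic continuation) exactly the continuation that the convention \eqref{RL:analcont} uses to \emph{define} $\prescript{RL}{a}D^{\alpha}_x$; so your appeal to the identity theorem is even lighter than you suggest, and the first identity holds for all $\alpha\in\mathbb{C}$ at once. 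The Mittag-Leffler identity then follows by termwise application exactly as you describe, and the interchange of operator and sum is justified along the lines you anticipate: the termwise $n$-th derivative of the series again has coefficients decaying like reciprocal Gamma functions, hence converges locally uniformly on compact subsets of $(a,b)$, after which $\prescript{RL}{a}I^{n-\alpha}_x$ passes through the sum by dominated convergence.

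Your secondary flag about positive integer $\alpha$ deserves to be stated more forcefully: under the paper's convention $n=\lfloor\mathrm{Re}(\alpha)\rfloor+1$, the eigenfunction identity does not merely need care there --- it fails. For $\alpha=1$ one gets $n=2$ and $\prescript{C}{a}D^{1}_xf(x)=\int_a^xf''(t)\,\mathrm{d}t=f'(x)-f'(a^+)$, so $\prescript{C}{a}D^{1}_x\big(E_1(\omega(x-a))\big)=\omega e^{\omega(x-a)}-\omega$, not $\omega e^{\omega(x-a)}$. (The first identity is unaffected, since $\frac{\mathrm{d}^n}{\mathrm{d}x^n}\prescript{RL}{a}I^{n-\alpha}_x$ is insensitive to the choice of $n$.) Thus the second identity, as stated for all $\mathrm{Re}(\alpha)>0$, is only correct when $n-1<\mathrm{Re}(\alpha)$ strictly --- equivalently when $\mathrm{Re}(\alpha)$ is not an integer --- or else under the convention $n=\lceil\mathrm{Re}(\alpha)\rceil$ used in the cited references. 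This is a mismatch between the lemma and the paper's own convention for $n$, not a gap in your argument; restricting to non-integer $\alpha$, as you do, is the right repair.
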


Following the above basic introduction to RL and Caputo fractional calculus, we continue with a similar introduction to fractional calculus with respect to a function. Let $\phi:[a,b]\to\mathbb{R}$ be a strictly increasing $C^1$ function, so that $\phi'>0$ everywhere.

\begin{defn}[\cite{osler,samko-kilbas-marichev,kilbas-srivastava-trujillo}] \label{Def:WRTF}
The Riemann--Liouville fractional integral of a function $f\in L^1(a,b)$ with respect to $\phi$, to order $\alpha$ in $\mathbb{R}$ or $\mathbb{C}$, is defined as
\begin{equation}
\label{WRTF:RLint}
\prescript{RL}{a}I^{\alpha}_{\phi(x)}f(x)=\frac{1}{\Gamma(\alpha)}\int_a^x\big(\phi(x)-\phi(t)\big)^{\alpha-1}f(t)\phi'(t)\,\mathrm{d}t,\qquad x\in(a,b),
\end{equation}
where $\mathrm{Re}(\alpha)>0$, or simply $\alpha>0$ if we assume real order.

As before, associated with this integral operator are two possible fractional derivative operators, respectively the Riemann--Liouville derivative with respect to $\phi$ and the Caputo derivative with respect to $\phi$, and this time assuming that $\phi$ is a $C^{\infty}$ function:
\begin{align}
\prescript{RL}{a}D^{\alpha}_{\phi(x)}f(x)&=\left(\frac{1}{\phi'(x)}\cdot\frac{\mathrm{d}}{\mathrm{d}x}\right)^n\Big(\prescript{RL}{a}I^{n-\alpha}_{\phi(x)}f(x)\Big),\qquad x\in(a,b), \label{WRTF:RLder} \\
\prescript{C}{a}D^{\alpha}_{\phi(x)}f(x)&=\prescript{RL}{a}I^{n-\alpha}_{\phi(x)}\left(\frac{1}{\phi'(x)}\cdot\frac{\mathrm{d}}{\mathrm{d}x}\right)^nf(x),\qquad x\in(a,b), \label{WRTF:Cder}
\end{align}
where $\mathrm{Re}(\alpha)\geq0$, or simply $\alpha>0$ if we assume real order, and $f\in AC^n(a,b)$ where $n:=\lfloor\mathrm{Re}(\alpha)\rfloor+1$ so that $n-1\leq\mathrm{Re}(\alpha)<n$.

Again, the Riemann--Liouville derivative \eqref{WRTF:RLder} is the analytic continuation of the Riemann--Liouville integral \eqref{WRTF:RLint} in the complex variable $\alpha$, under the convention that integrals of negative order are derivatives of positive order. So we can define both $\prescript{RL}{a}I^{\alpha}_{\phi(x)}f(x)$ and $\prescript{RL}{a}D^{\alpha}_{\phi(x)}f(x)$ for all values of $\alpha\in\mathbb{C}$, as before.
\end{defn}

\begin{lem}[\cite{samko-kilbas-marichev,kilbas-srivastava-trujillo}] \label{Lem:WRTFconjug}
Let $\phi\in C^{\infty}[a,b]$ be a function as above, and let $Q_{\phi}$ be the functional operator of right-composition with $\phi$: $Q_{\phi}g=g\circ\phi$ for any function $g$ defined on the interval $[\phi(a),\phi(b)]$. The operators of fractional calculus with respect to functions can be expressed as algebraic conjugations, via the operator $Q_{\phi}$, of the classical fractional calculus operators:
\begin{equation}
\label{FwrtF:conjug}
\prescript{RL}{a}I^{\alpha}_{\phi(x)}=Q_\phi\circ\prescript{RL}{\phi(a)}I^{\alpha}_x\circ Q_\phi^{-1},\quad\quad\prescript{RL}{a}D^{\alpha}_{\phi(x)}=Q_\phi\circ\prescript{RL}{\phi(a)}D^{\alpha}_x\circ Q_\phi^{-1},\quad\quad\prescript{C}{a}D^{\alpha}_{\phi(x)}=Q_\phi\circ\prescript{C}{\phi(a)}D^{\alpha}_x\circ Q_\phi^{-1}.
\end{equation}
\end{lem}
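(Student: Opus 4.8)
The plan is to reduce all three identities in \eqref{FwrtF:conjug} to two elementary computations, after first recording that $Q_\phi$ is invertible with $Q_\phi^{-1}h=h\circ\phi^{-1}$, which is well-defined because $\phi$, being strictly increasing and smooth, admits a smooth inverse $\phi^{-1}:[\phi(a),\phi(b)]\to[a,b]$. The organising principle is that conjugation by $Q_\phi$ respects composition: if $A=Q_\phi\circ B\circ Q_\phi^{-1}$ and $C=Q_\phi\circ D\circ Q_\phi^{-1}$, then $A\circ C=Q_\phi\circ B\circ D\circ Q_\phi^{-1}$, since the intermediate factor $Q_\phi^{-1}\circ Q_\phi$ cancels. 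Hence it suffices to establish the conjugation formula separately for the two building blocks out of which the operators in Definition \ref{Def:WRTF} are assembled, namely the fractional integral \eqref{WRTF:RLint} and the first-order operator $\frac{1}{\phi'}\cdot\frac{\mathrm{d}}{\mathrm{d}x}$.

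First I would handle the integral. Writing $g=Q_\phi^{-1}f=f\circ\phi^{-1}$, applying the classical operator of Definition \ref{Def:RL&C}, and then composing with $Q_\phi$ (i.e.\ replacing the free variable by $\phi(x)$) gives
\begin{equation*}
\Big(Q_\phi\circ\prescript{RL}{\phi(a)}I^{\alpha}_x\circ Q_\phi^{-1}\Big)f(x)=\frac{1}{\Gamma(\alpha)}\int_{\phi(a)}^{\phi(x)}\big(\phi(x)-s\big)^{\alpha-1}f\big(\phi^{-1}(s)\big)\,\mathrm{d}s.
\end{equation*}
The substitution $s=\phi(t)$, with $\mathrm{d}s=\phi'(t)\,\mathrm{d}t$ and limits running from $t=a$ to $t=x$, converts the right-hand side directly into \eqref{WRTF:RLint}. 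This proves the integral identity for $\mathrm{Re}(\alpha)>0$.

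Next I would prove the crucial differential identity $\frac{1}{\phi'}\cdot\frac{\mathrm{d}}{\mathrm{d}x}=Q_\phi\circ\frac{\mathrm{d}}{\mathrm{d}x}\circ Q_\phi^{-1}$, which is nothing more than the chain rule: for smooth $h$ one has $\frac{\mathrm{d}}{\mathrm{d}x}(h\circ\phi)(x)=\phi'(x)\,h'(\phi(x))$, equivalently $\frac{1}{\phi'}\cdot\frac{\mathrm{d}}{\mathrm{d}x}\circ Q_\phi=Q_\phi\circ\frac{\mathrm{d}}{\mathrm{d}x}$. Iterating $n$ times, and again using that conjugation respects composition, yields $\big(\frac{1}{\phi'}\cdot\frac{\mathrm{d}}{\mathrm{d}x}\big)^{n}=Q_\phi\circ\frac{\mathrm{d}^{n}}{\mathrm{d}x^{n}}\circ Q_\phi^{-1}$. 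The two derivative conjugations in \eqref{FwrtF:conjug} then drop out by assembling these pieces according to the defining formulas \eqref{WRTF:RLder} and \eqref{WRTF:Cder}; for the Riemann--Liouville derivative,
\begin{align*}
\prescript{RL}{a}D^{\alpha}_{\phi(x)}&=\Big(\tfrac{1}{\phi'}\cdot\tfrac{\mathrm{d}}{\mathrm{d}x}\Big)^{n}\circ\prescript{RL}{a}I^{n-\alpha}_{\phi(x)}\\
&=Q_\phi\circ\tfrac{\mathrm{d}^{n}}{\mathrm{d}x^{n}}\circ Q_\phi^{-1}\circ Q_\phi\circ\prescript{RL}{\phi(a)}I^{n-\alpha}_{x}\circ Q_\phi^{-1}=Q_\phi\circ\prescript{RL}{\phi(a)}D^{\alpha}_{x}\circ Q_\phi^{-1},
\end{align*}
where the middle $Q_\phi^{-1}\circ Q_\phi$ cancels and the classical definition \eqref{RL:der} is recognised; the Caputo case is identical with the integral and differentiation factors in the opposite order.

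I do not expect a serious obstacle, since the content is essentially the chain rule together with a change of variables, and the algebra of conjugation makes everything telescope. The only point requiring care is the bookkeeping of variables, domains, and base points under $Q_\phi$ and $Q_\phi^{-1}$ — in particular, checking that the lower limit transforms correctly from $a$ to $\phi(a)$, which is exactly what the substitution $s=\phi(t)$ guarantees, and that the smoothness of $\phi^{-1}$ legitimises the repeated application of the chain rule. As an alternative route for the Riemann--Liouville derivative alone, one could instead invoke \eqref{RL:analcont}: both sides of the integral identity are analytic in $\alpha$ and $Q_\phi$ is independent of $\alpha$, so the derivative relation follows from the integral relation by analytic continuation, though this shortcut does not apply to the genuinely distinct Caputo operator, which must be treated by the direct computation above.
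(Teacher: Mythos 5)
Your proof is correct. The paper itself gives no proof of this lemma (it is quoted from the cited literature), but your argument is essentially identical in structure to the proofs the paper does give for its analogous conjugation results (Proposition \ref{CRs} and Theorem \ref{Thm:WWconjug}): establish the conjugation for the fractional integral by the substitution $s=\phi(t)$, establish it for the first-order operator $\frac{1}{\phi'(x)}\cdot\frac{\mathrm{d}}{\mathrm{d}x}$ via the chain rule, and then let the telescoping of $Q_\phi^{-1}\circ Q_\phi$ under composition deliver both derivative identities.
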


Many results about fractional calculus with respect to functions can be obtained as immediate consequences of Lemma \ref{Lem:WRTFconjug} together with corresponding results in classical fractional calculus. For example, the semigroup properties of Lemma \ref{Lem:RLsemigroup} immediately give rise to corresponding semigroup properties for Riemann--Liouville differintegrals with respect to functions, simply by conjugation of the algebraic composition relations. Fractional differential equations with respect to functions can be transformed into classical fractional differential equations by simple substitutions equivalent to the conjugation relation \cite{zaky-hendy-suragan}. And the examples of Lemma \ref{Lem:RLCspecfunc} immediately give rise to the following examples of Riemann--Liouville and Caputo differintegrals with respect to functions:
\begin{align*}
\prescript{RL}{a}D^{\alpha}_{\phi(x)}\Big(\big(\phi(x)-\phi(a)\big)^{\beta}\Big)&=\frac{\Gamma(\beta+1)}{\Gamma(\beta-\alpha+1)}\big(\phi(x)-\phi(a)\big)^{\beta-\alpha},\qquad\alpha\in\mathbb{C},\mathrm{Re}(\beta)>-1; \\
\prescript{C}{a}D^{\alpha}_{\phi(x)}\Big(E_{\alpha}\big(\omega\big(\phi(x)-\phi(a)\big)^{\alpha}\big)\Big)&=\omega E_{\alpha}\big(\omega\big(\phi(x)-\phi(a)\big)^{\alpha}\big),\qquad\omega\in\mathbb{C},\mathrm{Re}(\alpha)>0.
\end{align*}

We conclude this preliminaries section by showing the definition of fractional integrals with general analytic kernels.

\begin{defn}[\cite{fernandez-ozarslan-baleanu}]
For $\alpha,\beta\in\mathbb{C}$ with positive real parts, let $A$ be a kernel function given by a power series about $0$ with radius of convergence at least $(b-a)^{\mathrm{Re}(\beta)}$. The fractional integral with kernel $A$ and parameters $\alpha,\beta$ of a function $f\in L^1(a,b)$ is defined as
\[
\prescript{A}{a}I^{\alpha,\beta}_xf(x)=\int_a^x(x-t)^{\alpha-1}A\big((x-t)^{\beta}\big)f(t)\,\mathrm{d}t,\qquad x\in(a,b).
\]
Fractional derivatives can also be defined in this general class, both of Riemann--Liouville type and of Caputo type, but we refer to \cite{fernandez-ozarslan-baleanu} for the details of these.
\end{defn}

\section{Weighted fractional calculus} \label{Sec:WFC}

\begin{defn}
	\label{Def:wRL&wC}
	The weighted Riemann--Liouville fractional integral of a given function $f\in L^1(a,b)$, with a weight function $w\in L^{\infty}(a,b)$ and order $\alpha$ in $\mathbb{R}$ or $\mathbb{C}$, is defined by
	\[
	\prescript{RL}{a}I^{\alpha}_{x;w(x)}f(x)=\frac{1}{\Gamma(\alpha)w(x)}\int_a^x(x-t)^{\alpha-1}w(t)f(t)\,\mathrm{d}t,\qquad x\in(a,b),
	\]
	where we require $\mathrm{Re}(\alpha)>0$, or simply $\alpha>0$ if we assume real order.
	
	The weighted Riemann--Liouville fractional derivative of a given function $f\in AC^n(a,b)$, with a weight function $w\in AC^n[a,b]$ and order $\alpha$ in $\mathbb{R}$ or $\mathbb{C}$, is defined by
	\[
	\prescript{RL}{a}D^{\alpha}_{x;w(x)}f(x)=\left(\frac{\mathrm{d}}{\mathrm{d}x} + \frac{w^{\prime}(x)}{w(x)}\right)^n\prescript{RL}{a}I^{n-\alpha}_{x;w(x)}f(x),\qquad x\in(a,b),
	\]
	where $\mathrm{Re}(\alpha)\geq0$, or simply $\alpha>0$ if we assume real order, and $n:=\lfloor\mathrm{Re}(\alpha)\rfloor+1$ so that $n-1\leq\mathrm{Re}(\alpha)<n$.
	
	The weighted Caputo fractional derivative of a given function $f\in C^n(a,b)$, with a weight function $w\in C^n[a,b]$ and order $\alpha$ in $\mathbb{R}$ or $\mathbb{C}$, is defined by
	\[
	\prescript{C}{a}D^{\alpha}_{x;w(x)}f(x)=\prescript{RL}{a}I^{n-\alpha}_{x;w(x)}\left(\frac{\mathrm{d}}{\mathrm{d}x} + \frac{w^{\prime}(x)}{w(x)}\right)^nf(x),
	\]
	where $\mathrm{Re}(\alpha)\geq0$, or simply $\alpha>0$ if we assume real order, and $n:=\lfloor\mathrm{Re}(\alpha)\rfloor+1$ so that $n-1\leq\mathrm{Re}(\alpha)<n$.
\end{defn}

\subsection{Conjugation relations}

It is clear that the weighted Riemann--Liouville fractional integral with weight function $w$ is given by multiplying by $w$, applying the original Riemann--Liouville fractional integral to the same order, and then dividing by $w$ again. This gives rise to a conjugation relation for the weighted fractional integral, which we can also extend to weighted fractional derivatives in order to achieve the following result.

\begin{prop}\label{CRs}
	The weighted fractional differintegrals are conjugations of the original fractional differintegrals, as follows:
\begin{align*}
\prescript{RL}{a}I^{\alpha}_{x;w(x)}&= M_{w(x)}^{-1}  \circ\prescript{RL}{a}I^{\alpha}_x\circ M_{w(x)},\\
\prescript{RL}{a}D^{\alpha}_{x;w(x)}&=  M_{w(x)}^{-1}  \circ\prescript{RL}{a}D^{\alpha}_x\circ M_{w(x)},\\
\prescript{C}{a}D^{\alpha}_{x;w(x)}&=  M_{w(x)}^{-1}  \circ\prescript{C}{a}D^{\alpha}_x\circ M_{w(x)},
\end{align*}
	where the operator $M_{w(x)}$ acting on functions is defined by multiplication:
	\begin{equation}\label{Mdef}
	\big(M_{w(x)}f\big)(x)=w(x)f(x).
	\end{equation}
\end{prop}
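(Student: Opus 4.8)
The plan is to reduce all three identities to a single observation about the first-order weighted differential operator, and then to bootstrap to arbitrary order using the definitions together with the already-transparent conjugation for the integral.

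First I would dispose of the integral relation, which is essentially a restatement of Definition \ref{Def:wRL&wC}. Unwinding $M_{w(x)}^{-1}\circ\prescript{RL}{a}I^{\alpha}_x\circ M_{w(x)}$ applied to a function $f$: the inner $M_{w(x)}$ replaces $f$ by $wf$, the classical Riemann--Liouville integral produces $\frac{1}{\Gamma(\alpha)}\int_a^x(x-t)^{\alpha-1}w(t)f(t)\,\mathrm{d}t$, and the outer $M_{w(x)}^{-1}$ divides by $w(x)$, giving exactly the weighted integral. This settles the first line and, just as importantly, yields the identity $\prescript{RL}{a}I^{n-\alpha}_{x;w(x)}=M_{w(x)}^{-1}\circ\prescript{RL}{a}I^{n-\alpha}_x\circ M_{w(x)}$, which I will reuse in the derivative cases.

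The heart of the argument is the conjugation of the first-order operator, namely $\frac{\mathrm{d}}{\mathrm{d}x}+\frac{w'(x)}{w(x)}=M_{w(x)}^{-1}\circ\frac{\mathrm{d}}{\mathrm{d}x}\circ M_{w(x)}$. This is a one-line product-rule check: applying the right-hand side to $f$ gives $\frac{1}{w}\frac{\mathrm{d}}{\mathrm{d}x}(wf)=f'+\frac{w'}{w}f$. Since conjugation by a fixed operator respects composition, raising both sides to the $n$-th power makes the interior factors $M_{w(x)}\circ M_{w(x)}^{-1}$ telescope, so that $\left(\frac{\mathrm{d}}{\mathrm{d}x}+\frac{w'}{w}\right)^n=M_{w(x)}^{-1}\circ\frac{\mathrm{d}^n}{\mathrm{d}x^n}\circ M_{w(x)}$ for every $n\in\mathbb{N}$.

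With these two ingredients the derivative relations follow by direct substitution. For the Riemann--Liouville case I would start from $\prescript{RL}{a}D^{\alpha}_{x;w(x)}=\left(\frac{\mathrm{d}}{\mathrm{d}x}+\frac{w'}{w}\right)^n\circ\prescript{RL}{a}I^{n-\alpha}_{x;w(x)}$, replace each factor by its conjugate, let the central $M_{w(x)}\circ M_{w(x)}^{-1}$ cancel, and arrive at $M_{w(x)}^{-1}\circ\frac{\mathrm{d}^n}{\mathrm{d}x^n}\circ\prescript{RL}{a}I^{n-\alpha}_x\circ M_{w(x)}=M_{w(x)}^{-1}\circ\prescript{RL}{a}D^{\alpha}_x\circ M_{w(x)}$ by the definition of the classical RL derivative. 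The Caputo case is identical except that the integral and differential factors appear in the opposite order, the same interior cancellation producing $M_{w(x)}^{-1}\circ\prescript{RL}{a}I^{n-\alpha}_x\circ\frac{\mathrm{d}^n}{\mathrm{d}x^n}\circ M_{w(x)}$, which is $M_{w(x)}^{-1}\circ\prescript{C}{a}D^{\alpha}_x\circ M_{w(x)}$. The only genuine content is the conjugation of the first-order operator; everything else is bookkeeping, so I do not anticipate a real obstacle. The one point requiring care is that $M_{w(x)}^{-1}$ must be legitimate and the iterated weighted operator well-defined, so I would keep the hypotheses exactly those of Definition \ref{Def:wRL&wC} (with $w$ nowhere vanishing and of the stated regularity $AC^n$ or $C^n$), ensuring every composition above acts on the intended function spaces.
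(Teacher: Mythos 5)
Your proposal is correct and follows essentially the same route as the paper's own proof: verify the integral conjugation directly from the definition, establish the key identity $\frac{\mathrm{d}}{\mathrm{d}x}+\frac{w'(x)}{w(x)}=M_{w(x)}^{-1}\circ\frac{\mathrm{d}}{\mathrm{d}x}\circ M_{w(x)}$ by the product rule, and then obtain both derivative relations by composing (telescoping) conjugations. Your version merely spells out the bookkeeping (the $n$-th power telescoping and the substitution into the definitions) that the paper compresses into the phrase ``the results follow from composition of conjugation relations.''
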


\begin{proof} \allowdisplaybreaks
The first result $\prescript{RL}{a}I^{\alpha}_{x;w(x)}= M_{w(x)}^{-1}  \circ\prescript{RL}{a}I^{\alpha}_x\circ M_{w(x)}$ is clear. Since both types of weighted fractional derivatives are compositions of the weighted fractional integral with the operator $\frac{\mathrm{d}}{\mathrm{d}x} + \frac{w^{\prime}(x)}{w(x)}$ repeated $n$ times, it will suffice to show that this non-fractional operator also satisfies a conjugation relation, $\frac{\mathrm{d}}{\mathrm{d}x} + \frac{w^{\prime}(x)}{w(x)}=M_{w(x)}^{-1}\circ\frac{\mathrm{d}}{\mathrm{d}x}\circ M_{w(x)}$. This is easily proved using the product rule:
\begin{align*}
M_{w(x)}^{-1}\circ\frac{\mathrm{d}}{\mathrm{d}x}\circ M_{w(x)}f(x)&=M_{w(x)}^{-1}\circ\frac{\mathrm{d}}{\mathrm{d}x}\big(w(x)f(x)\big)=M_{w(x)}^{-1}\big(w(x)f'(x)+w'(x)f(x)\big) \\
&=f'(x)+\frac{w'(x)}{w(x)}f(x)=\left(\frac{\mathrm{d}}{\mathrm{d}x} + \frac{w^{\prime}(x)}{w(x)}\right)f(x).
\end{align*}
The results follow from composition of conjugation relations.
\end{proof}

The results of Proposition \ref{CRs} are extremely useful in the study of weighted fractional calculus, because now many fundamental results from the original fractional calculus can be extended immediately, simply by composition, to the corresponding results on the weighted fractional operators. It is often true that the best way to prove facts about a new mathematical object is to express it in terms of an older one: this is the same principle that gave rise to the series formula for fractional calculus with general analytic kernels and to the conjugation relation for fractional calculus with respect to functions.

\begin{prop}
The weighted Riemann--Liouville derivative is the analytic continuation of the weighted Riemann--Liouville integral in the complex variable $\alpha$, under the convention that integrals of negative order are derivatives of positive order:
\[
\prescript{RL}{a}D^{\alpha}_{x;w(x)}f(x)=\prescript{RL}{a}I^{-\alpha}_{x;w(x)}f(x),\qquad\mathrm{Re}(\alpha)\geq0.
\]
This fact allows both $\prescript{RL}{a}I^{\alpha}_{x;w(x)}f(x)$ and $\prescript{RL}{a}D^{\alpha}_{x;w(x)}f(x)$ to be defined for all values of $\alpha\in\mathbb{C}$, in the same way as for the original Riemann--Liouville differintegrals.
\end{prop}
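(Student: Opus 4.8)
The plan is to reduce the entire statement to the corresponding classical fact \eqref{RL:analcont} by invoking the conjugation relations of Proposition \ref{CRs}. The crucial observation is that the conjugating operator $M_{w(x)}$ depends only on the variable $x$ and not on the order $\alpha$, so it is compatible with the operation of analytic continuation in $\alpha$; everything then follows by transporting the classical result through the fixed conjugation.

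First I would establish the operator identity for $\mathrm{Re}(\alpha)\geq 0$. Starting from the derivative conjugation relation $\prescript{RL}{a}D^{\alpha}_{x;w(x)} = M_{w(x)}^{-1}\circ\prescript{RL}{a}D^{\alpha}_x\circ M_{w(x)}$, I would substitute the classical analytic-continuation identity \eqref{RL:analcont}, namely $\prescript{RL}{a}D^{\alpha}_x=\prescript{RL}{a}I^{-\alpha}_x$, to obtain $M_{w(x)}^{-1}\circ\prescript{RL}{a}I^{-\alpha}_x\circ M_{w(x)}$. By the integral conjugation relation of Proposition \ref{CRs}, read with $-\alpha$ in place of $\alpha$, this expression is exactly $\prescript{RL}{a}I^{-\alpha}_{x;w(x)}$, which gives the stated equation.

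Second, to justify the phrase \emph{analytic continuation}, I would fix a suitable function $f$ and a point $x$ and examine the map $\alpha\mapsto\prescript{RL}{a}I^{\alpha}_{x;w(x)}f(x)$. By the integral conjugation relation this equals $\frac{1}{w(x)}\,\prescript{RL}{a}I^{\alpha}_x\big(w f\big)(x)$, and since the prefactor $1/w(x)$ is independent of $\alpha$, analyticity in $\alpha$ of the weighted integral is inherited directly from that of the classical integral $\prescript{RL}{a}I^{\alpha}_x(wf)(x)$. The classical fact \eqref{RL:analcont} guarantees that the latter extends analytically from the half-plane $\mathrm{Re}(\alpha)>0$ to all of $\mathbb{C}$, with its continuation to $\mathrm{Re}(\alpha)\leq 0$ being the classical derivative $\prescript{RL}{a}D^{-\alpha}_x(wf)$; conjugating back by the $\alpha$-independent factor $1/w(x)$ carries both the existence of the continuation and its value into the weighted setting, yielding $\prescript{RL}{a}D^{-\alpha}_{x;w(x)}f(x)$.

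I do not expect any genuine obstacle here: the whole content is that conjugation by an order-independent multiplication operator commutes with analytic continuation in the order, so the weighted theory simply inherits the classical structure. The only points deserving a moment's care are keeping the sign convention straight (integrals of negative order being derivatives of positive order, so that the substitution $\beta=-\alpha$ aligns the two half-planes correctly) and recording that the admissible class of functions $f$ is precisely the one for which the classical continuation in \eqref{RL:analcont} is valid, with $w f$ in place of $f$.
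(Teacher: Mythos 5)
Your proposal is correct and takes essentially the same approach as the paper: the paper's proof consists of a single sentence invoking exactly the two ingredients you use, namely the conjugation relations of Proposition~\ref{CRs} combined with the classical analytic-continuation identity \eqref{RL:analcont}. Your additional observation that conjugation by the $\alpha$-independent operator $M_{w(x)}$ commutes with analytic continuation in $\alpha$ is a careful elaboration of why that one-line argument works, not a different method.
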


\begin{proof}
This follows immediately from the conjugation relations of Proposition \ref{CRs} together with the corresponding analytic continuation result for Riemann--Liouville differintegrals given at Equation \eqref{RL:analcont}.
\end{proof}

\begin{prop}
	The weighted fractional differintegrals have semigroup properties as follows:
	\begin{align*}
		\prescript{RL}{a}I^{\alpha}_{x;w(x)} \prescript{RL}{a}I^{\beta}_{x;w(x)} f(x) &= \prescript{RL}{a}I^{\alpha+\beta}_{x;w(x)} f(x),\qquad\alpha\in\mathbb{C},\mathrm{Re}(\beta)>0; \\
		\left(\frac{\mathrm{d}}{\mathrm{d}x} + \frac{w^{\prime}(x)}{w(x)}\right)^n \prescript{RL}{a}D^{\alpha}_{x;w(x)} f(x) &= \prescript{RL}{a}D^{n+\alpha}_{x;w(x)} f(x),\qquad\alpha\in\mathbb{C},n\in\mathbb{N},
	\end{align*}
	where in both cases $f$ is any function such that the relevant expressions are well-defined.

Note that the operators labelled by $\alpha$ in both of these relations may be either fractional integrals or fractional derivatives, while the one labelled by $\beta$ must be a fractional integral.
\end{prop}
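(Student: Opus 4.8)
The plan is to deduce both identities directly from the conjugation relations of Proposition \ref{CRs}, exactly in the spirit of the authors' remark that fundamental results transfer from the original calculus ``simply by composition.'' The single algebraic fact underlying everything is that composing two conjugated operators causes the inner multiplication operators to telescope: for any operators $S,T$ one has $\big(M_{w(x)}^{-1}\circ S\circ M_{w(x)}\big)\big(M_{w(x)}^{-1}\circ T\circ M_{w(x)}\big)=M_{w(x)}^{-1}\circ S\circ T\circ M_{w(x)}$, since the central factor $M_{w(x)}\circ M_{w(x)}^{-1}$ is the identity. Thus conjugation by $M_{w(x)}$ is an algebra homomorphism, and any compositional identity among the classical operators pulls back verbatim to the weighted ones.

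For the first identity I would write $\prescript{RL}{a}I^{\alpha}_{x;w(x)}\,\prescript{RL}{a}I^{\beta}_{x;w(x)}$ using the integral conjugation relation, telescope the central $M_{w(x)}\circ M_{w(x)}^{-1}$ to obtain $M_{w(x)}^{-1}\circ\prescript{RL}{a}I^{\alpha}_x\circ\prescript{RL}{a}I^{\beta}_x\circ M_{w(x)}$, and then invoke the Riemann--Liouville semigroup property of Lemma \ref{Lem:RLsemigroup}, valid precisely for $\alpha\in\mathbb{C}$ and $\mathrm{Re}(\beta)>0$, to replace the interior composition by $\prescript{RL}{a}I^{\alpha+\beta}_x$. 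Reading the resulting conjugation $M_{w(x)}^{-1}\circ\prescript{RL}{a}I^{\alpha+\beta}_x\circ M_{w(x)}$ backwards through Proposition \ref{CRs} identifies it as $\prescript{RL}{a}I^{\alpha+\beta}_{x;w(x)}$. The validity conditions, including the stipulation that the $\beta$-operator must be an integer, transfer unchanged because they are exactly those imposed by the classical semigroup law.

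For the second identity I first need the $n$-fold composition rule for the first-order operator $\frac{\mathrm{d}}{\mathrm{d}x}+\frac{w'(x)}{w(x)}$. The proof of Proposition \ref{CRs} already established $\frac{\mathrm{d}}{\mathrm{d}x}+\frac{w'(x)}{w(x)}=M_{w(x)}^{-1}\circ\frac{\mathrm{d}}{\mathrm{d}x}\circ M_{w(x)}$; applying the telescoping cancellation $n$ times yields $\big(\frac{\mathrm{d}}{\mathrm{d}x}+\frac{w'(x)}{w(x)}\big)^n=M_{w(x)}^{-1}\circ\frac{\mathrm{d}^n}{\mathrm{d}x^n}\circ M_{w(x)}$. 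Composing this with the conjugation relation for $\prescript{RL}{a}D^{\alpha}_{x;w(x)}$ and again cancelling the central $M_{w(x)}\circ M_{w(x)}^{-1}$ reduces the left-hand side to $M_{w(x)}^{-1}\circ\frac{\mathrm{d}^n}{\mathrm{d}x^n}\circ\prescript{RL}{a}D^{\alpha}_x\circ M_{w(x)}$. The second semigroup relation of Lemma \ref{Lem:RLsemigroup} then collapses the interior to $\prescript{RL}{a}D^{n+\alpha}_x$, and reading the conjugation backwards identifies the whole expression as $\prescript{RL}{a}D^{n+\alpha}_{x;w(x)}$.

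As for obstacles, there are essentially none of substance, since the argument is a mechanical transfer through the homomorphism property of conjugation. The only points demanding a little care are the telescoping of the $n$-fold power, where one should confirm that conjugation genuinely commutes with taking powers and that $M_{w(x)}$ is invertible on the relevant function spaces (which rests on $w$ being nowhere zero, as is already implicit in the appearance of $w'/w$), and the bookkeeping of which labelled operators are permitted to be fractional derivatives rather than integrals, a restriction inherited directly from Lemma \ref{Lem:RLsemigroup}.
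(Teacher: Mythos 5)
Your proof is correct and follows exactly the paper's own route: the paper proves this proposition in one line as "an immediate consequence of Proposition \ref{CRs} (conjugation relations) with Lemma \ref{Lem:RLsemigroup} (Riemann--Liouville semigroup properties)", and your argument simply makes explicit the telescoping of $M_{w(x)}\circ M_{w(x)}^{-1}$ that the paper leaves implicit. The only blemish is a slip of wording: you wrote that the $\beta$-operator "must be an integer" where you clearly meant it must be a fractional \emph{integral} (i.e.\ $\mathrm{Re}(\beta)>0$).
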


\begin{proof}
	This is an immediate consequence of Proposition \ref{CRs} (conjugation relations) with Lemma \ref{Lem:RLsemigroup} (Riemann--Liouville semigroup properties).
\end{proof}

\begin{prop}
The following composition properties are valid for weighted Riemann--Liouville and Caputo differintegrals in cases where semigroup properties are not:
\begin{align*}
\prescript{RL}{a}I^{\alpha}_{x;w(x)}\prescript{RL}{a}D^{\alpha}_{x;w(x)}f(x)&=f(x)-\sum_{k=1}^n\frac{(x-a)^{\alpha-k}}{\Gamma(\alpha-k+1)}\cdot\frac{w(a^+)}{w(x)}\cdot\lim_{x\rightarrow a^+}\prescript{RL}{a}D^{\alpha-k}_{x;w(x)}f(x); \\
\prescript{RL}{a}I^{\alpha}_{x;w(x)}\prescript{C}{a}D^{\alpha}_{x;w(x)}f(x)&=f(x)-\sum_{k=0}^{n-1}\frac{(x-a)^k}{k!}\cdot\frac{w(a^+)}{w(x)}\cdot\lim_{x\rightarrow a^+}\left(\frac{\mathrm{d}}{\mathrm{d}x} + \frac{w'(x)}{w(x)}\right)^kf(x),
\end{align*}
where in both cases $\alpha\in\mathbb{C}$ with $\mathrm{Re}(\alpha)>0$ and $n=\lfloor\mathrm{Re}(\alpha)\rfloor+1$ while $f$ is any function such that the relevant expressions are well-defined. We also have the following relationship between the weighted Riemann--Liouville and Caputo derivatives:
\begin{align*}
\prescript{C}{a}D^{\alpha}_{x;w(x)}f(x)&=\prescript{RL}{a}D^{\alpha}_{x;w(x)}f(x)-\sum_{k=0}^{n-1}\frac{(x-a)^{k-\alpha}}{\Gamma(k-\alpha+1)}\cdot\frac{w(a^+)}{w(x)}\cdot\lim_{x\rightarrow a^+}\left(\frac{\mathrm{d}}{\mathrm{d}x} + \frac{w'(x)}{w(x)}\right)^kf(x) \\
&=\prescript{RL}{a}D^{\alpha}_{x;w(x)}\left(f(x)-\sum_{k=0}^{n-1}\frac{(x-a)^k}{k!}\cdot\frac{w(a^+)}{w(x)}\cdot\lim_{x\rightarrow a^+}\left(\frac{\mathrm{d}}{\mathrm{d}x} + \frac{w'(x)}{w(x)}\right)^kf(x)\right),
\end{align*}
where $f\in AC^n(a,b)$ and $\alpha,n$ are as before.
\end{prop}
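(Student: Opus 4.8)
The plan is to lean entirely on the conjugation relations of Proposition \ref{CRs}, which reduce every weighted composition to the corresponding classical composition of Lemma \ref{Lem:RLcompos}. The recurring mechanism is that, for each operator $T$ appearing in the classical identities, its weighted counterpart is $M_{w(x)}^{-1}\circ T\circ M_{w(x)}$, so any product of weighted operators telescopes: the inner $M_{w(x)}$ and $M_{w(x)}^{-1}$ cancel, leaving $M_{w(x)}^{-1}\circ(T_1\circ T_2)\circ M_{w(x)}$, to which the classical result applies directly with the substituted function $wf$.

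To illustrate with the first identity, I would first collapse the inner multiplication operators,
\[
\prescript{RL}{a}I^{\alpha}_{x;w(x)}\prescript{RL}{a}D^{\alpha}_{x;w(x)}f(x) = M_{w(x)}^{-1}\circ\prescript{RL}{a}I^{\alpha}_x\circ\prescript{RL}{a}D^{\alpha}_x\circ M_{w(x)}f(x),
\]
and then apply the first composition formula of Lemma \ref{Lem:RLcompos} to the function $g=wf$. Dividing through by $w(x)$ restores $f(x)$ in the leading term and attaches a factor $\frac{1}{w(x)}$ to the sum. The one genuine computation is then to rewrite each classical limit $\lim_{x\to a^+}\prescript{RL}{a}D^{\alpha-k}_x(wf)(x)$ in weighted terms: the derivative conjugation relation gives $\prescript{RL}{a}D^{\alpha-k}_x(wf)(x)=w(x)\,\prescript{RL}{a}D^{\alpha-k}_{x;w(x)}f(x)$, and continuity of $w$ at $a^+$ pulls the weight out of the limit as $w(a^+)$, producing exactly the stated factor $\frac{w(a^+)}{w(x)}$.

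The second identity follows identically, using the second composition formula of Lemma \ref{Lem:RLcompos} together with the iterated-operator identity $\left(\frac{\mathrm{d}}{\mathrm{d}x}+\frac{w'}{w}\right)^k=M_{w(x)}^{-1}\circ\frac{\mathrm{d}^k}{\mathrm{d}x^k}\circ M_{w(x)}$ already established inside the proof of Proposition \ref{CRs}; this converts $\lim_{x\to a^+}\frac{\mathrm{d}^k}{\mathrm{d}x^k}(wf)(x)$ into $w(a^+)\lim_{x\to a^+}\left(\frac{\mathrm{d}}{\mathrm{d}x}+\frac{w'}{w}\right)^kf(x)$. For the two forms of the weighted Riemann--Liouville--Caputo relationship I would conjugate the two corresponding classical identities of Lemma \ref{Lem:RLcompos}; the second form additionally uses $M_{w(x)}^{-1}\circ\prescript{RL}{a}D^{\alpha}_x=\prescript{RL}{a}D^{\alpha}_{x;w(x)}\circ M_{w(x)}^{-1}$ to move the conjugating factor past the outer Riemann--Liouville derivative before simplifying the bracketed argument.

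I expect the only real obstacle to be the bookkeeping around the boundary factor: one must apply the substitution $g=wf$ \emph{before} taking limits, and verify that the continuity hypotheses $w\in AC^n[a,b]$ (respectively $C^n[a,b]$) legitimately yield $\lim_{x\to a^+}w(x)=w(a^+)$, so that the weight separates cleanly from the limit of the weighted differintegral. Once this point is handled carefully, all four displayed formulas drop out by the same conjugate-then-substitute pattern, requiring no further analysis.
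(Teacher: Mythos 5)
Your proposal is correct and follows essentially the same route as the paper's own proof: collapse the inner multiplication operators via the conjugation relations of Proposition \ref{CRs}, apply Lemma \ref{Lem:RLcompos} to the substituted function $wf$, then convert each classical boundary term back into weighted form and use continuity of $w$ to extract the factor $w(a^+)$ from the limit. The paper carries out exactly this computation for the first identity and dismisses the rest with ``similarly,'' so your explicit treatment of the Caputo identity and of the two Riemann--Liouville--Caputo relations (including moving $M_{w(x)}^{-1}$ past the outer derivative) only fills in details the paper omits.
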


\begin{proof}
All of these relations follow directly from combining the conjugation relations of Proposition \ref{CRs} with the previous relations of Lemma \ref{Lem:RLcompos}. But the conjugation process is not quite so easy this time, as the final term (finite sum) is more tricky to deal with than single weighted fractional operators or compositions thereof. We verify the first of the stated relations carefully as follows:
\begin{align*}
\prescript{RL}{a}I^{\alpha}_{x;w(x)}\prescript{RL}{a}D^{\alpha}_{x;w(x)}f(x)&=\frac{1}{w(x)}\prescript{RL}{a}I^{\alpha}_x\prescript{RL}{a}D^{\alpha}_x\Big(w(x)f(x)\Big) \\
&=\frac{1}{w(x)}\left[w(x)f(x)-\sum_{k=1}^n\frac{(x-a)^{\alpha-k}}{\Gamma(\alpha-k+1)}\cdot\lim_{x\rightarrow a^+}\prescript{RL}{a}D^{\alpha-k}_x\Big(w(x)f(x)\Big)\right] \\
&=f(x)-\sum_{k=1}^n\frac{(x-a)^{\alpha-k}}{\Gamma(\alpha-k+1)w(x)}\cdot\lim_{x\rightarrow a^+}\Big(w(x)\cdot\prescript{RL}{a}D^{\alpha-k}_{x;w(x)}f(x)\Big) \\
&=f(x)-\sum_{k=1}^n\frac{(x-a)^{\alpha-k}w(a^+)}{\Gamma(\alpha-k+1)w(x)}\cdot\lim_{x\rightarrow a^+}\prescript{RL}{a}D^{\alpha-k}_{x;w(x)}f(x),
\end{align*}
which is the required result. Similarly for the other relations stated in the Proposition.
\end{proof}

\begin{prop} \label{Prop:specfunc}
The weighted Riemann--Liouville and Caputo differintegrals of certain functions are given as follows:
\begin{alignat*}{2}
\prescript{RL}{a}D^{\alpha}_{x;w(x)}\left(\frac{(x-a)^{\beta}}{w(x)}\right)&=\frac{\Gamma(\beta+1)}{\Gamma(\beta-\alpha+1)}\frac{(x-a)^{\beta-\alpha}}{w(x)},&&\qquad\alpha\in\mathbb{C},\mathrm{Re}(\beta)>-1; \\
\prescript{C}{a}D^{\alpha}_{x;w(x)}\left(\frac{E_{\alpha}\big(\omega(x-a)^{\alpha}\big)}{w(x)}\right)&=\omega\cdot\frac{E_{\alpha}\big(\omega(x-a)^{\alpha}\big)}{w(x)},&&\qquad\omega\in\mathbb{C},\mathrm{Re}(\alpha)>0,
\end{alignat*}
where $E_{\alpha}$ is the Mittag-Leffler function. Note that the operator in the first identity can be either a fractional integral or a fractional derivative, according to the sign of $\mathrm{Re}(\alpha)$.
\end{prop}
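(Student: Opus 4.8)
The plan is to deduce both identities directly from the conjugation relations of Proposition \ref{CRs} together with the classical special-function formulae of Lemma \ref{Lem:RLCspecfunc}. The crucial observation is that the factor $1/w(x)$ appearing inside each argument has been placed precisely so as to be absorbed by the multiplication operator $M_{w(x)}$ in the conjugation: when one applies $M_{w(x)}$ first, the weight cancels and the problem reduces to the corresponding unweighted computation.

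For the first identity I would write $\prescript{RL}{a}D^{\alpha}_{x;w(x)}=M_{w(x)}^{-1}\circ\prescript{RL}{a}D^{\alpha}_x\circ M_{w(x)}$ and apply this to $(x-a)^{\beta}/w(x)$. The innermost operator $M_{w(x)}$ turns this argument into simply $(x-a)^{\beta}$; the classical power-rule from Lemma \ref{Lem:RLCspecfunc} then yields $\frac{\Gamma(\beta+1)}{\Gamma(\beta-\alpha+1)}(x-a)^{\beta-\alpha}$; and a final application of $M_{w(x)}^{-1}$, which divides by $w(x)$, reproduces the claimed right-hand side. The hypothesis $\mathrm{Re}(\beta)>-1$ and the remark that the operator may be read as either a fractional integral or a fractional derivative according to the sign of $\mathrm{Re}(\alpha)$ are both inherited verbatim from the unweighted statement.

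The second identity proceeds in exactly the same way, now using the Caputo conjugation $\prescript{C}{a}D^{\alpha}_{x;w(x)}=M_{w(x)}^{-1}\circ\prescript{C}{a}D^{\alpha}_x\circ M_{w(x)}$. Multiplying the argument $E_{\alpha}\big(\omega(x-a)^{\alpha}\big)/w(x)$ by $w(x)$ strips the weight to leave $E_{\alpha}\big(\omega(x-a)^{\alpha}\big)$, to which the classical Caputo eigenfunction relation applies, giving $\omega E_{\alpha}\big(\omega(x-a)^{\alpha}\big)$; dividing once more by $w(x)$ completes the argument.

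I do not expect any genuine obstacle in this proof: essentially all of its content lies in recognising that the $1/w(x)$ factors have been engineered to cancel the conjugating multiplication, collapsing each computation onto its Riemann--Liouville or Caputo counterpart. The only point requiring a little care is the bookkeeping of the three-fold composition, together with the observation that $M_{w(x)}$ and its inverse act pointwise and therefore commute freely with the scalar Gamma-function and $\omega$ coefficients that appear along the way.
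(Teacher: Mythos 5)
Your proof is correct and is exactly the paper's argument: the paper likewise derives both identities by combining the conjugation relations of Proposition \ref{CRs} with the classical formulae of Lemma \ref{Lem:RLCspecfunc}, merely stating this in one line where you have spelled out the cancellation of the $1/w(x)$ factor by $M_{w(x)}$.
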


	\begin{proof}
The result follows directly from the conjugation relations given by Proposition \ref{CRs}, combined with the results given in Lemma \ref{Lem:RLCspecfunc}.
	\end{proof}
	
\begin{rem}
Note that the functions used in Proposition \ref{Prop:specfunc} are just two possible examples that could have been chosen. Any known result for Riemann--Liouville or Caputo differintegrals of any particular functions can now easily be extended to an analogous result on weighted differintegrals, with the functions divided by $w(x)$ on left and right sides of the identity.

Note also that, for any given smooth nonzero function $h(x)$ on $(a,b)$, we can now find a weighted Caputo-type fractional derivative which has this function as an eigenfunction, by choosing the weight function to be $w(x)=\frac{E_{\alpha}\big(\omega(x-a)^{\alpha}\big)}{h(x)}$ and using the second result of Proposition \ref{Prop:specfunc}.
\end{rem}

\subsection{Examples}

In this subsection, we discuss some particular choices of weight function $w(x)$ which lead to interesting special cases in the theory of weighted fractional calculus. We start with a trivial example and then proceed to two more examples which give rise to models of fractional calculus that are already well known and studied in the literature.

\begin{ex}
If $w(x)=k$ is a constant, then the operators of weighted fractional calculus, as given in Definition \ref{Def:wRL&wC}, are exactly the same as the original Riemann--Liouville and Caputo operators given in Definition \ref{Def:RL&C}. This is because the operator $M_{w(x)}$ in this case commutes with the operators of fractional differentiation and integration, and indeed with any linear functional operator.

Note that this is the only possible type of function $w(x)$ which returns the original Riemann--Liouville and Caputo operators, because it is the only case when the weighted 1st-order derivative $\frac{\mathrm{d}}{\mathrm{d}x} + \frac{w^{\prime}(x)}{w(x)}$ is the same as the original 1st-order derivative $\frac{\mathrm{d}}{\mathrm{d}x}$.
\end{ex}

\begin{rem}
More generally, we can conclude that, for a given weight function $w(x)=w_0(x)$, any constant multiple $w(x)=kw_0(x)$ will give rise to exactly the same operators of weighted fractional calculus as $w_0(x)$. In fact, this implication goes both ways: two weight functions $w_1(x)$ and $w_2(x)$ give rise to the same weighted operators if and only if they are constant multiples of each other. Therefore, we can say that the class of weighted fractional calculi bijects with the space of possible weight functions quotiented by the equivalence relation of constant multiplication.
\end{rem}

\begin{ex} \label{Ex:tempered}
If $w(x)=e^{\beta x}$ is an exponential function, then the operators of weighted fractional calculus, as given in Definition \ref{Def:wRL&wC}, are precisely those of tempered fractional calculus, defined \cite{meerschaert-sabzikar-chen,li-deng-zhao} as follows:
\begin{align*}
	\prescript{T}{a}I^{\alpha,\beta}_xf(x)&=\frac{1}{\Gamma(\alpha)}\int_a^x(x-t)^{\alpha-1}e^{-\beta(x-t)}f(t)\,\mathrm{d}t,\qquad\beta\in\mathbb{C},\mathrm{Re}(\alpha)>0; \\
	\prescript{TR}{a}D^{\alpha,\beta}_xf(x)&=\left(\frac{\mathrm{d}}{\mathrm{d}x} + \beta\right)^n\prescript{T}{a}I^{n-\alpha,\beta}_xf(x),\qquad\beta\in\mathbb{C},\mathrm{Re}(\alpha)\geq0; \\
	\prescript{TC}{a}D^{\alpha,\beta}_xf(x)&=\prescript{T}{a}I^{n-\alpha,\beta}_x\left(\frac{\mathrm{d}}{\mathrm{d}x} + \beta\right)^nf(x),\qquad\beta\in\mathbb{C},\mathrm{Re}(\alpha)\geq0.
\end{align*}
Therefore, tempered fractional calculus is within the class of weighted fractional calculus. It is also known \cite{fernandez-ustaoglu} that tempered fractional calculus is within the class of fractional calculus with analytic kernels. In fact, the intersection between these two general classes of fractional operators consists only of tempered fractional calculus, as we show in the following Theorem.
\end{ex}

\begin{thm}
The only operator which is a member of both the class of fractional integrals with general analytic kernels and the class of weighted fractional integrals is the tempered fractional integral. In other words, if
\[
\mathcal{O}=\prescript{A}{a}I^{\alpha,\beta}_x=\prescript{RL}{a}I^{\alpha}_{x;w(x)},
\]
with an analytic function $A$ and a real continuous function $w$, then
\[
\mathcal{O}=\prescript{T}{a}I^{\alpha,\kappa}_x
\]
for some constant $\kappa\in\mathbb{R}$.
\end{thm}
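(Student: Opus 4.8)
The plan is to prove the statement by comparing the two operators at the level of their integral kernels. Both $\prescript{A}{a}I^{\alpha,\beta}_x$ and $\prescript{RL}{a}I^{\alpha}_{x;w(x)}$ are Volterra-type integral operators acting on $f\in L^1(a,b)$, with kernels
\[
K_A(x,t)=(x-t)^{\alpha-1}A\big((x-t)^{\beta}\big),\qquad K_w(x,t)=\frac{1}{\Gamma(\alpha)}(x-t)^{\alpha-1}\frac{w(t)}{w(x)},
\]
respectively. The first step is to argue that the hypothesis $\mathcal{O}=\prescript{A}{a}I^{\alpha,\beta}_x=\prescript{RL}{a}I^{\alpha}_{x;w(x)}$ forces $K_A=K_w$. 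Since two integral operators of this lower-triangular form that agree on all admissible $f$ must have kernels agreeing for almost every $(x,t)$ with $a<t<x<b$ (seen by testing against approximations to point masses, or by uniqueness of the Volterra kernel), and since both kernels are continuous in $t$ on $(a,x)$ by continuity of $A$ and of $w$, the two kernels coincide identically.

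Next, I would cancel the common nonvanishing factor $(x-t)^{\alpha-1}$ for $t<x$, reducing kernel equality to
\[
A\big((x-t)^{\beta}\big)=\frac{1}{\Gamma(\alpha)}\cdot\frac{w(t)}{w(x)},\qquad a<t<x<b.
\]
The crucial observation is that the left-hand side depends on $x$ and $t$ only through the difference $s:=x-t$, since the analytic-kernel operator is of convolution type. Writing $H(s):=\Gamma(\alpha)A(s^{\beta})$, this says $w(t)/w(x)=H(x-t)$; that is, the weight ratio is translation-invariant. Note that $w$ cannot vanish on $(a,b)$, for otherwise the ratio would be undefined or unbounded, and being real and continuous it then has constant sign; since replacing $w$ by $|w|$ alters neither the operator nor the ratio, I may assume $w>0$.

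Taking logarithms and setting $u:=\log w$, the relation becomes $u(t+s)-u(t)=-\log H(s)$, a quantity independent of $t$. Denoting this by $g(s)$, a direct computation shows $g(s_1+s_2)=g(s_1)+g(s_2)$, so that $g$ satisfies Cauchy's functional equation; by continuity its only solutions are $g(s)=\kappa s$ for some $\kappa\in\mathbb{R}$, whence $u$ is affine and $w(x)=Ce^{\kappa x}$ for constants $C>0$ and $\kappa\in\mathbb{R}$. Since (as observed earlier in Section \ref{Sec:WFC}) constant multiples of the weight give rise to the same weighted operator, I may drop $C$ and take $w(x)=e^{\kappa x}$; then Example \ref{Ex:tempered} identifies $\prescript{RL}{a}I^{\alpha}_{x;w(x)}$ with the tempered fractional integral $\prescript{T}{a}I^{\alpha,\kappa}_x$, with $\kappa$ real because $u$ is real-valued, as required.

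The main obstacle I anticipate is the passage from mere continuity of $w$ to its exponential form: the translation-invariance of the weight ratio is exactly Cauchy's functional equation, and one must invoke continuity — rather than differentiability, which is not assumed — to exclude the pathological non-measurable solutions. A secondary technical point is the rigorous justification that equality of the two operators forces pointwise equality of their kernels, together with the verification that $w$ is nonvanishing so that passing to $\log w$ is legitimate.
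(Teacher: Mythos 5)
Your proof is correct and takes essentially the same route as the paper: equate the two Volterra kernels, cancel $(x-t)^{\alpha-1}$, observe that the analytic-kernel side depends only on $x-t$ so the ratio $w(t)/w(x)$ is translation-invariant, and reduce this to Cauchy's functional equation, which continuity solves to give $w(x)=Ce^{\kappa x}$ and hence the tempered operator. The only differences are cosmetic: you pass to logarithms and use the additive form of Cauchy's equation (carefully justifying that $w$ is nonvanishing and may be taken positive), whereas the paper stays with the multiplicative form $W(x-t)W(t)=W(x)$; your extra justifications of kernel equality and of the sign of $w$ tighten steps the paper treats as immediate.
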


\begin{proof}
We start with an operator $\mathcal{O}$ which is in both classes, and write the integrals explicitly as follows:
\[
\mathcal{O}f(x)=\int_a^x(x-t)^{\alpha-1}A\big((x-t)^{\beta}\big)f(t)\,\mathrm{d}t=\frac{1}{\Gamma(\alpha)w(x)}\int_a^x(x-t)^{\alpha-1}w(t)f(t)\,\mathrm{d}t,
\]
for all $f\in L^1(a,b)$ and $x\in(a,b)$. Therefore, we must have
\[
\Gamma(\alpha)A\big((x-t)^{\beta}\big)=\frac{w(t)}{w(x)},\qquad a<t<x<b.
\]
The left-hand side is a function of $x-t$; let us write $q(z)=\Gamma(\alpha)A(z^{\beta})$, noting that this is a smooth function on the real interval $z\in[0,b-a]$. Without loss of generality, let us assume $a=0$ as the lower end of the interval. Now we have $q(x-t)=\frac{w(t)}{w(x)}$ for $0<t<x<b$. Letting $t\to0$, we find that $q(x)=\frac{w(0)}{w(x)}$, so the equation becomes
\[
\frac{w(0)}{w(x-t)}=\frac{w(t)}{w(x)},\qquad 0<t<x<b.
\]
Writing $W(z)=\frac{w(z)}{w(0)}$ (a constant multiple of the original function $w$), we obtain Cauchy's functional equation:
\[
W(x-t)W(t)=W(x),\qquad 0<t<x<b.
\]
Since we assumed $w$ real and continuous, this means it must be a constant times an exponential function, $w(x)=w(0)e^{\kappa x}$ for some constant $\kappa\in\mathbb{R}$, which gives the result.
\end{proof}

\begin{ex} \label{Ex:Kober}
If $w(x)=x^{\eta}$ is a power function, then the operators of weighted fractional calculus, as given in Definition \ref{Def:wRL&wC}, are almost exactly those of the so-called Kober--Erd\'elyi (or simply Kober) fractional calculus, defined in \cite[\S18.1]{samko-kilbas-marichev} and \cite[\S2.6]{kilbas-srivastava-trujillo} as follows:
\[
\prescript{K}{a}I^{\alpha;\eta}_xf(x)=\frac{x^{-\alpha-\eta}}{\Gamma(\alpha)}\int_a^x(x-t)^{\alpha-1}t^{\eta}f(t)\,\mathrm{d}t,\qquad\mathrm{Re}(\alpha)>0,\mathrm{Re}(\eta)>0.
\]
(Usually, the Kober--Erd\'elyi operator is written with lower bound $a=0$, but we see no reason here to restrict to this particular case, as it can easily be written with the general constant of integration $a$.) It is clear that we have
\[
\prescript{K}{a}I^{\alpha;\eta}_xf(x)=x^{-\alpha}\cdot\prescript{RL}{a}I^{\alpha}_{x;x^{\eta}}f(x).
\]
This fact was already noted in \cite[Eq. (2.6.15)]{kilbas-srivastava-trujillo}, using a different notation of $M_{\eta}$ operators to give a relationship between the Kober--Erd\'elyi integral and the Riemann--Liouville integral.

A closely related, but more general, operator is the so-called Erd\'elyi--Kober fractional integral, which is also discussed in \cite[\S18.1]{samko-kilbas-marichev} and \cite[\S2.6]{kilbas-srivastava-trujillo}. However, this involves a fractional power of $(x^{\sigma}-t^{\sigma})$, not only $(x-t)$, so it is not easily related to a special case of weighted fractional calculus as given in Definition \ref{Def:wRL&wC}. In fact, the appearance of this operator has elements of both weighted fractional calculus and fractional calculus with respect to functions, as given in Definition \ref{Def:WRTF}, and later we shall see that it is a special case of the larger class of operators described in Section \ref{Sec:WFCwrtf} as weighted fractional calculus with respect to functions.
\end{ex}

\subsection{Laplace transform and convolution} 

In this subsection, we discuss a weighted integral transform, a very simple variation on the Laplace transform, which can be used to solve linear fractional differential equations involving weighted Riemann--Liouville and Caputo fractional derivatives. Related to this transform, we shall introduce a weighted convolution operation, which has a conjugation relation with the classical Laplace-type convolution. Using the conjugation approach and well-known facts about the classical Laplace transform and convolution, we can easily prove the fundamental properties of the weighted Laplace transform and weighted convolution.

\begin{defn}
	Let $ f : [0, \infty) \to \mathbb{C} $ be a real-valued or complex-valued function. Then the weighted Laplace transform of $ f $ with a weight function $ w $ is defined by
	\begin{equation} \label{WLT}
		\mathcal{L}_{w(x)} \left\{ f(x) \right\}=F(s)=\int_{0}^{\infty} e^{-s x} w(x) f(x) \,\mathrm{d}x,
	\end{equation}
	for any $s\in\mathbb{C}$ such that this is a convergent integral.
\end{defn}

It is clear that the $w$-weighted Laplace transform of $f$ is precisely the usual Laplace transform of the function $w(x)f(x)$: in operational calculus terms, this means that
	\begin{equation} \label{WLT:comp}
		\mathcal{L}_{w(x)}=\mathcal{L}\circ M_{w(x)},
	\end{equation}
where $M_{w(x)}$ is the multiplication operator defined by \eqref{Mdef} above.

Therefore, $F(s)$ exists for $\mathrm{Re}(s)>c$ if $w$ and $f$ are such that $w(x)f(x)$ is of exponential order $c$: for example, if $w\in L^{\infty}[0,\infty)$ and $f$ is of exponential order $c$, or if $w$ is of exponential order $c_1$ and $f$ is of exponential order $c_2$ with $c_1+c_2=c$.

As corollaries of the relation \eqref{WLT:comp}, we mention the following results.

\begin{cor}\label{IWLT:cor}
The inverse weighted Laplace transform exists for any function which has a classical inverse Laplace transform, and it may be written as follows:
	\begin{equation}
		\label{IWLT:comp}
		\mathcal{L}_{w(x)}^{-1}=M_{w(x)}^{-1}\circ\mathcal{L}^{-1},
	\end{equation}
	or in other words
	\begin{equation} \label{inverseWL}
		\mathcal{L}^{-1}_{w(x)} \left\{ F(s)  \right\} =\frac{{1}}{{2 \pi i w(x)}} \int_{c-i\infty}^{c+i\infty} e^{sx }  F(s) \,\mathrm{d}s.
	\end{equation}
\end{cor}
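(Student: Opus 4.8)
The plan is to derive everything directly from the operational factorisation \eqref{WLT:comp}, namely $\mathcal{L}_{w(x)}=\mathcal{L}\circ M_{w(x)}$, by inverting the composition. Since $w$ is a nonvanishing weight function, the multiplication operator $M_{w(x)}$ is invertible, with $M_{w(x)}^{-1}$ given by division by $w(x)$, exactly as used in Proposition \ref{CRs}. The inverse of a composition of invertible operators reverses the order, so formally $\mathcal{L}_{w(x)}^{-1}=(\mathcal{L}\circ M_{w(x)})^{-1}=M_{w(x)}^{-1}\circ\mathcal{L}^{-1}$, which is precisely the claimed operator identity \eqref{IWLT:comp}. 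The only point requiring care is that $\mathcal{L}^{-1}$ is a genuine inverse only on the range of $\mathcal{L}$, so the existence of the weighted inverse transform must be inherited from the classical theory rather than asserted outright.

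Concretely, I would argue as follows. Suppose $F(s)$ admits a classical inverse Laplace transform, so that $\mathcal{L}^{-1}\{F(s)\}$ is well defined. Whenever $F(s)=\mathcal{L}_{w(x)}\{f(x)\}$, the factorisation \eqref{WLT:comp} gives $F(s)=\mathcal{L}\{w(x)f(x)\}$; applying the classical inverse transform to both sides yields $\mathcal{L}^{-1}\{F(s)\}=w(x)f(x)$, and dividing through by $w(x)$ — that is, applying $M_{w(x)}^{-1}$ — recovers $f(x)=\frac{1}{w(x)}\mathcal{L}^{-1}\{F(s)\}$. This simultaneously establishes the existence of the weighted inverse, whenever the classical one exists, and its explicit form. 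The integral representation \eqref{inverseWL} then follows immediately by substituting the classical Bromwich formula $\mathcal{L}^{-1}\{F(s)\}=\frac{1}{2\pi i}\int_{c-i\infty}^{c+i\infty}e^{sx}F(s)\,\mathrm{d}s$ into this identity, with the abscissa $c$ chosen to the right of all singularities of $F$ exactly as in the unweighted setting; no new convergence analysis is needed, since the contour integral is literally the classical one divided by $w(x)$.

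I do not anticipate a serious analytic obstacle here: the entire statement is a consequence of the algebraic factorisation \eqref{WLT:comp} together with the invertibility of $M_{w(x)}$. The subtlest point — and the only thing I would flag explicitly — is the precise sense of the word ``inverse''. The classical $\mathcal{L}^{-1}$ is unique only up to modification on null sets (or, for sufficiently regular $F$, at points of continuity, by Lerch's theorem), and the weighted inverse inherits uniqueness in exactly the same sense, since multiplication by the nonvanishing function $1/w(x)$ preserves both null sets and points of continuity. Thus the weighted inverse transform is as well-defined as, and no better-behaved than, its classical counterpart.
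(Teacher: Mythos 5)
Your proposal is correct and follows exactly the route the paper intends: the paper states this corollary as an immediate consequence of the factorisation \eqref{WLT:comp}, $\mathcal{L}_{w(x)}=\mathcal{L}\circ M_{w(x)}$, and your inversion of that composition (with the Bromwich formula substituted for $\mathcal{L}^{-1}$) is precisely the omitted one-line argument, fleshed out. Your added remarks on the range of $\mathcal{L}$ and on uniqueness up to null sets are careful refinements, not deviations.
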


\begin{cor}\label{Coroll:genfn}
	If $f$ is a function which has a classical Laplace transform $F(s)$, then the weighted Laplace transform of $\left(M_{w(x)}^{-1}f\right)(x)=\frac{f(x)}{w(x)}$ is also $F(s)$:
	\[
	\mathcal{L}\{f(x)\}=F(s)\quad\Rightarrow\quad\mathcal{L}_{w(x)}\left\{\frac{f(x)}{w(x)}\right\}=F(s).
	\]
\end{cor}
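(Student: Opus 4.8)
The plan is to invoke the operational composition identity \eqref{WLT:comp}, namely $\mathcal{L}_{w(x)}=\mathcal{L}\circ M_{w(x)}$, applied to the specific input $M_{w(x)}^{-1}f$. First I would rewrite the quantity to be computed as $\mathcal{L}_{w(x)}\big(M_{w(x)}^{-1}f\big)$, recognising that $\frac{f(x)}{w(x)}=\big(M_{w(x)}^{-1}f\big)(x)$. Substituting the composition formula then yields $\mathcal{L}\circ M_{w(x)}\circ M_{w(x)}^{-1}f$, and since the multiplication operator and its inverse compose to the identity, this collapses to $\mathcal{L}(f)=F(s)$, which is exactly the desired conclusion. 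In effect this is an immediate consequence of \eqref{WLT:comp}, in the same spirit as Corollary \ref{IWLT:cor}.

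If one prefers a self-contained verification, I would instead expand the definition \eqref{WLT} directly: substituting $\frac{f(x)}{w(x)}$ for the input gives the integral $\int_0^\infty e^{-sx}w(x)\cdot\frac{f(x)}{w(x)}\,\mathrm{d}x$, in which the factor $w(x)$ cancels the factor $\frac{1}{w(x)}$, leaving precisely $\int_0^\infty e^{-sx}f(x)\,\mathrm{d}x=F(s)$. There is no real obstacle here; the only point worth recording is that the resulting integral is literally identical to the one defining $\mathcal{L}\{f\}$, so it converges on exactly the same half-plane $\mathrm{Re}(s)>c$, and no extra growth hypothesis on $w$ is needed beyond its nonvanishing, which is what makes $M_{w(x)}^{-1}$ well defined in the first place.
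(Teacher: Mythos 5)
Your proof is correct and takes essentially the same route as the paper: the paper presents this corollary as an immediate consequence of the composition identity $\mathcal{L}_{w(x)}=\mathcal{L}\circ M_{w(x)}$ from Equation \eqref{WLT:comp}, which is precisely your first argument, with $M_{w(x)}\circ M_{w(x)}^{-1}$ collapsing to the identity. Your second, integral-level verification is just an unwinding of the same cancellation and adds nothing that needs checking.
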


The reason for introducing a weighted Laplace transform, even when it is so similar to the classical Laplace transform that all results concerning it are trivially proved, is that it has a natural relationship with the operators of weighted fractional calculus. In the following theorem, we find the weighted Laplace transforms of the weighted Riemann--Liouville and Caputo fractional differintegrals.

\begin{thm} \label{inttrans}
	Let $ \alpha >0 $ and let $f$ be a continuous function on $[0,\infty)$ which is of $w$-weighted exponential order, where $w$ is a continuous weight function. Then we have the following results.
	\begin{enumerate}
	\item \[\mathcal{L}_{w(x)} \left\{ \left( 	\prescript{RL}{0}I^{\alpha}_{x;w(x)}f\right)(x) \right\}= s^{-\alpha}\mathcal{L}_{w(x)} \left\{f(x)\right\}.\]
	\item Let $ n-1\leq\mathrm{Re}(\alpha)<n\in\mathbb{Z}^+ $, and assume that $\prescript{RL}{0}D^{\alpha}_{x;w(x)}f$ is continuous on $[0,\infty)$ and of $w$-weighted exponential order. Then
	\[\mathcal{L}_{w(x)}\left\{\left(\prescript{RL}{0}D^{\alpha}_{x;w(x)}f\right)(x) \right\} = s^{\alpha}\mathcal{L}_{w(x)}\{f(x)\} - w(0^+)\sum_{i=0}^{n-1} s^{n-i-1}\left(\prescript{RL}{0}I^{n-i-\alpha}_{x;w(x)}f\right)(0^+).\]
	\item Let $ n-1\leq\mathrm{Re}(\alpha)<n\in\mathbb{Z}^+ $, and assume that $\prescript{RL}{0}D^{n}_{x;w(x)}f$ is continuous on $[0,\infty)$ and of $w$-weighted exponential order. Then
	\[\mathcal{L}_{w(x)} \left\{ \left(	\prescript{C}{0}D^{\alpha}_{x;w(x)}f\right)(x) \right\} =s^{\alpha} \mathcal{L}_{w(x)}\{f(x)\} - w(0^+)\sum_{i=0}^{n-1} s^{\alpha-i-1}\left[\left(\frac{\mathrm{d}}{\mathrm{d}x} + \frac{w^{\prime}(x)}{w(x)}\right)^if\right](0^+).\]
	\end{enumerate}
\end{thm}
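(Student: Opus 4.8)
The plan is to reduce each of the three statements to the corresponding classical Laplace-transform formula by exploiting two conjugation relations simultaneously: the factorisation $\mathcal{L}_{w(x)}=\mathcal{L}\circ M_{w(x)}$ from \eqref{WLT:comp}, and the conjugation relations for the weighted differintegrals from Proposition \ref{CRs}. The key observation is that for any weighted operator of the form $\mathcal{D}_w=M_{w(x)}^{-1}\circ\mathcal{D}\circ M_{w(x)}$, where $\mathcal{D}$ is the classical RL integral, RL derivative, or Caputo derivative, we have
\[
\mathcal{L}_{w(x)}\left\{\mathcal{D}_w f\right\}=\mathcal{L}\left\{M_{w(x)}\mathcal{D}_w f\right\}=\mathcal{L}\left\{\mathcal{D}(wf)\right\},
\]
since the leading $M_{w(x)}$ cancels the $M_{w(x)}^{-1}$ in the conjugation. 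Thus every weighted Laplace transform of a weighted operator applied to $f$ equals the classical Laplace transform of the classical operator applied to $g:=wf$, and I can invoke the well-known classical formulas directly. The hypothesis that $f$ is of $w$-weighted exponential order is exactly what guarantees that $g=wf$ is of classical exponential order, so that these classical formulas and the convergence of all integrals are justified.

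For part 1, I would apply the above with $\mathcal{D}=\prescript{RL}{0}I^{\alpha}_x$ and use the standard identity $\mathcal{L}\{\prescript{RL}{0}I^{\alpha}_x g\}(s)=s^{-\alpha}\mathcal{L}\{g\}(s)$. Since $\mathcal{L}\{g\}=\mathcal{L}\{wf\}=\mathcal{L}_{w(x)}\{f\}$, this immediately yields $s^{-\alpha}\mathcal{L}_{w(x)}\{f\}$, with no boundary terms to track. For parts 2 and 3 the same reduction applies, but now the classical formulas carry finite sums of boundary values, and translating these back into weighted notation is the only genuinely delicate step. For the RL derivative I would start from the classical formula, written in the form
\[
\mathcal{L}\left\{\prescript{RL}{0}D^{\alpha}_x g\right\}(s)=s^{\alpha}\mathcal{L}\{g\}(s)-\sum_{i=0}^{n-1}s^{n-i-1}\left(\prescript{RL}{0}I^{n-i-\alpha}_x g\right)(0^+),
\]
apply it with $g=wf$, and then rewrite each boundary term using the conjugation relation of Proposition \ref{CRs} in the form $\prescript{RL}{0}I^{n-i-\alpha}_x(wf)=w(x)\cdot\prescript{RL}{0}I^{n-i-\alpha}_{x;w(x)}f$; letting $x\to 0^+$ produces the single factor $w(0^+)$ multiplying $\left(\prescript{RL}{0}I^{n-i-\alpha}_{x;w(x)}f\right)(0^+)$, exactly as claimed.

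For the Caputo derivative I would use the classical formula $\mathcal{L}\{\prescript{C}{0}D^{\alpha}_x g\}(s)=s^{\alpha}\mathcal{L}\{g\}(s)-\sum_{i=0}^{n-1}s^{\alpha-i-1}g^{(i)}(0^+)$ with $g=wf$, and convert the ordinary derivatives $(wf)^{(i)}$ using the first-order conjugation identity established inside the proof of Proposition \ref{CRs}. Iterating $\frac{\mathrm{d}}{\mathrm{d}x}\circ M_{w(x)}=M_{w(x)}\circ\left(\frac{\mathrm{d}}{\mathrm{d}x}+\frac{w'(x)}{w(x)}\right)$ gives $\frac{\mathrm{d}^i}{\mathrm{d}x^i}(wf)=w(x)\cdot\left(\frac{\mathrm{d}}{\mathrm{d}x}+\frac{w'(x)}{w(x)}\right)^i f$, and once more taking $x\to 0^+$ yields the factor $w(0^+)$ in front of the bracketed weighted derivatives.

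I expect the main obstacle to be bookkeeping rather than conceptual: matching the index conventions between the classical RL-derivative formula (often stated in terms of $\prescript{RL}{0}D^{\alpha-k-1}_x g$ evaluated at the origin, via the reindexing $k=n-i-1$ together with $\prescript{RL}{0}D^{\beta}_x=\prescript{RL}{0}I^{-\beta}_x$) and the form involving weighted integrals $\prescript{RL}{0}I^{n-i-\alpha}_{x;w(x)}f$ used in the statement, and confirming that the single factor $w(0^+)$ can be pulled uniformly outside each sum. Once the conjugation identities are in hand, no further analysis is required, since all convergence and regularity issues are subsumed in the $w$-weighted exponential order hypothesis.
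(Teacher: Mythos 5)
Your proposal is correct and follows essentially the same route as the paper's own proof: combining the conjugation relations of Proposition \ref{CRs} with the factorisation $\mathcal{L}_{w(x)}=\mathcal{L}\circ M_{w(x)}$ so that each weighted transform reduces to the classical formula applied to $g=wf$, and then converting the boundary terms via $\prescript{RL}{0}I^{n-i-\alpha}_x(wf)=w(x)\,\prescript{RL}{0}I^{n-i-\alpha}_{x;w(x)}f$ and $(wf)^{(i)}=w(x)\bigl(\tfrac{\mathrm{d}}{\mathrm{d}x}+\tfrac{w'(x)}{w(x)}\bigr)^i f$ to extract the factor $w(0^+)$. The only point the paper treats more carefully is why assumptions imposed solely at the highest order of differentiation suffice for the classical formulas (continuity and exponential boundedness are preserved by integration, so all intermediate-order terms inherit these properties), which your closing remark gestures at but does not spell out.
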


\begin{proof}
All of these results follow from combining the results of Proposition \ref{CRs} and Equation \eqref{WLT:comp} with the following well-known facts on classical Laplace transforms of fractional integrals and derivatives:
\begin{align*}
\mathcal{L} \left\{ \left( 	\prescript{RL}{0}I^{\alpha}_{x}f\right)(x) \right\}&= s^{-\alpha}\mathcal{L} \left\{ f(x) \right\}, \\
\mathcal{L}\left\{\left(\prescript{RL}{0}D^{\alpha}_{x}f\right)(x) \right\} &= s^{\alpha}\mathcal{L}\{f(x)\} - \sum_{i=0}^{n-1} s^{n-i-1}\left(\prescript{}{0}I^{n-i-\alpha}_{x}f\right)(0^+), \\
\mathcal{L} \left\{ \left(	\prescript{C}{0}D^{\alpha}_{x}f\right)(x) \right\} &=s^{\alpha} \mathcal{L}\{f(x)\} - \sum_{i=0}^{n-1} s^{\alpha-i-1}f^{(i)}(0^+).
\end{align*}
Note that the given conditions on $f$ imply that (in the second case, Riemann--Liouville derivatives) the function $\prescript{RL}{0}I^{n-\alpha}_{x;w(x)}f$ and all its weighted derivatives from $1$st to $n$th order, namely $\prescript{}{0}D^{\alpha-n+1}_{x;w(x)}f$, $\ldots$, $\prescript{RL}{0}D^{\alpha-1}_{x;w(x)}f$, $\prescript{RL}{0}D^{\alpha}_{x;w(x)}f$, are continuous on $[0,\infty)$ and of $w$-weighted exponential order, while (in the third case, Caputo derivatives) the function $f$ and all its weighted derivatives from $1$st to $n$th order, namely $\prescript{RL}{0}D^{1}_{x;w(x)}f$, $ \prescript{RL}{0}D^{2}_{x;w(x)}f$, $\ldots$, $\prescript{RL}{0}D^{n}_{x;w(x)}f$, are continuous on $[0,\infty)$ and of $w$-weighted exponential order, and so is $\prescript{C}{0}D^{\alpha}_{x;w(x)}f $. This is because continuity and exponential boundedness are preserved by integration; we have fixed our assumptions to be only at the highest order of differentiation.
\end{proof}

Now, we consider a $ w $-weighted convolution operation between two functions, which relates naturally to the weighted Laplace transform.

\begin{defn} 
The $ w $-weighted convolution of two real-valued or complex-valued functions $f,g:[0,\infty)\to\mathbb{C}$ is the function $f\ast_{w(x)} g$ defined by 
	\begin{equation}\label{convolution}
		\Big(f*_{w(x)} g\Big)(x)= \frac{1}{w(x)}\int_{0}^{x} w(x-t)f\left( x-t \right)  w(t)g(t)  \,\mathrm{d}t.
	\end{equation}
\end{defn}

It is clear that this weighted convolution is closely related to classical convolution via the following formula:
\[
\Big(f*_{w(x)} g\Big)(x)= \frac{1}{w(x)}\big(wf\big)*\big(wg\big)(x),
\]
or in other words, using the multiplication operator defined in \eqref{Mdef},
	\begin{equation}\label{RSbwWCnC}
		f\ast_{w(x)} g = M_{w(x)}^{-1} \Big( \left( M_{w(x)} f \right) \ast  \left( M_{w(x)} g \right) \Big).
	\end{equation}
Since both classical and $ w $-weighted convolutions are binary operations acting on pairs of functions, we can use the alternative notations $\ast(f,g)$ and $\ast_{w(x)}(f,g)$, instead of $f\ast g$ and $f\ast_{w(x)} g$, in order to rewrite \eqref{RSbwWCnC} as a conjugation relation of operators:
	\begin{equation*}
		\ast_{w(x)} = M_{w(x)}^{-1} \circ \ast \circ \Big( M_{w(x)},M_{w(x)} \Big) \quad\Rightarrow\quad
		\ast_{w(x)}\left( f,g \right)  = M_{w(x)}^{-1}  \Big( \ast \left(M_{w(x)}f,M_{w(x)}g\right)  \Big).
	\end{equation*}

These relations, combined with the operational relation \eqref{WLT:comp} for the weighted Laplace transforms, give rise to the following important corollary, an analogue of the classical Laplace convolution theorem.

\begin{cor}
	If $f,g:[0,X]\to\mathbb{C}$ are piecewise continuous and their products with $w$ are of exponential order $c>0 $, then
	\begin{equation*}
		\mathcal{L}_{w(x)} \left\{ f\ast_{w(x)} g\right\}=\mathcal{L}_{w(x)}\{f\}\mathcal{L}_{w(x)}\{g\}.
	\end{equation*}
\end{cor}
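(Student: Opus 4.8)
The plan is to reduce this weighted convolution theorem to the classical Laplace convolution theorem by means of the two conjugation relations already established, namely Equation \eqref{WLT:comp} for the weighted Laplace transform and Equation \eqref{RSbwWCnC} for the weighted convolution. This is exactly the same philosophy used throughout the paper: express each weighted object in terms of its classical counterpart via the multiplication operator $M_{w(x)}$, apply the known classical result, and then translate back.

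First I would start from the left-hand side and peel off the weighted Laplace transform using \eqref{WLT:comp}, writing $\mathcal{L}_{w(x)}\{f\ast_{w(x)}g\}=\mathcal{L}\{M_{w(x)}(f\ast_{w(x)}g)\}$. Next I would substitute the convolution conjugation \eqref{RSbwWCnC}, which gives $M_{w(x)}(f\ast_{w(x)}g)=(M_{w(x)}f)\ast(M_{w(x)}g)$, since the outer $M_{w(x)}$ simply cancels the $M_{w(x)}^{-1}$ appearing in \eqref{RSbwWCnC}. At this point the expression has become $\mathcal{L}\{(M_{w(x)}f)\ast(M_{w(x)}g)\}$, a purely classical Laplace transform of a purely classical convolution.

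Then I would invoke the classical Laplace convolution theorem to factor this as $\mathcal{L}\{M_{w(x)}f\}\,\mathcal{L}\{M_{w(x)}g\}$, and finally apply \eqref{WLT:comp} once more in each factor to recognise $\mathcal{L}\{M_{w(x)}f\}=\mathcal{L}_{w(x)}\{f\}$ and $\mathcal{L}\{M_{w(x)}g\}=\mathcal{L}_{w(x)}\{g\}$, which is precisely the claimed identity. The entire argument is thus a short chain of substitutions bracketing a single appeal to the classical theorem.

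The only point requiring care — the main obstacle, though a mild one — is verifying that the hypotheses legitimately permit the application of the classical convolution theorem. That theorem requires $M_{w(x)}f=wf$ and $M_{w(x)}g=wg$ to be locally integrable (guaranteed here by piecewise continuity) and of exponential order (which is exactly the stated assumption that the products $wf$ and $wg$ are of exponential order $c>0$). These conditions ensure that both classical Laplace transforms converge on a common right half-plane and that the underlying convolution integral is absolutely convergent, so that the Fubini argument behind the classical proof applies. Once this is confirmed, each equality in the chain above is rigorous and the result follows.
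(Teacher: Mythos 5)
Your proof is correct and follows exactly the route the paper intends: the paper derives this corollary precisely by combining the conjugation relation \eqref{RSbwWCnC} for the weighted convolution with the composition relation \eqref{WLT:comp} for the weighted Laplace transform and then invoking the classical convolution theorem. Your additional remarks on piecewise continuity and exponential order justifying the classical theorem are a sensible (and welcome) elaboration of what the paper leaves implicit.
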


In the following theorem, we use the weighted Laplace transform method to establish a regularity condition for the solutions to weighted fractional differential equations of the following type:
\begin{align}
	\prescript{C}{0}D^{\alpha}_{x;w(x)}\boldsymbol{y}(x)&=A\boldsymbol{y}(x)+\boldsymbol{g}(x), \qquad x\geq0, \label{de}
	\\ \boldsymbol{y}(0)&= \boldsymbol{\eta},  \label{ic}
\end{align}
where $0<\alpha<1$ is fixed and $ A = \left(a_{ij}\right) $ is an $n\times n$ constant matrix and $\boldsymbol{g}$ is a continuous $n$-dimensional vector-valued function and $\boldsymbol{\eta}$ is a constant $n$-dimensional vector.

\begin{thm}
	Assume that the system $\eqref{de}-\eqref{ic}$ has a unique continuous solution $ \boldsymbol{y} $. If $ \boldsymbol{g}$ is continuous on $[0,\infty) $ and $ w $-weighted exponentially bounded, then $ \boldsymbol{y} $ and $ \prescript{C}{0}D^{\alpha}_{x;w(x)}\boldsymbol{y} $ are both $ w $-weighted exponentially bounded too.
\end{thm}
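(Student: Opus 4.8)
The plan is to exploit the Caputo conjugation relation of Proposition~\ref{CRs} to reduce the weighted system to a classical Caputo system, and then to establish exponential boundedness of the latter by a weighted-supremum-norm (Bielecki-type) estimate applied to the equivalent Volterra integral equation.

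First I would introduce $\boldsymbol{z}(x)=w(x)\boldsymbol{y}(x)=\big(M_{w(x)}\boldsymbol{y}\big)(x)$. By the relation $\prescript{C}{0}D^{\alpha}_{x;w(x)}=M_{w(x)}^{-1}\circ\prescript{C}{0}D^{\alpha}_x\circ M_{w(x)}$ of Proposition~\ref{CRs}, and since the constant matrix $A$ commutes with scalar multiplication by $w(x)$, the system \eqref{de}--\eqref{ic} is equivalent to the classical Caputo system
\begin{equation*}
\prescript{C}{0}D^{\alpha}_x\boldsymbol{z}(x)=A\boldsymbol{z}(x)+w(x)\boldsymbol{g}(x),\qquad\boldsymbol{z}(0)=w(0^+)\boldsymbol{\eta}.
\end{equation*}
The point of this change of variable is that $w$-weighted exponential boundedness of $\boldsymbol{y}$ is, by definition, classical exponential boundedness of $\boldsymbol{z}=w\boldsymbol{y}$, while the hypothesis on $\boldsymbol{g}$ says precisely that the forcing term $w\boldsymbol{g}$ is classically exponentially bounded, say $\|w(t)\boldsymbol{g}(t)\|\le Me^{ct}$ with $M,c>0$. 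The statement therefore reduces to showing that the continuous solution $\boldsymbol{z}$ of this classical system is exponentially bounded.

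Next I would pass to the equivalent integral equation: applying $\prescript{RL}{0}I^{\alpha}_x$ to the reduced equation and invoking the composition identity of Lemma~\ref{Lem:RLcompos} with $n=1$ (valid since $0<\alpha<1$) gives
\begin{equation*}
\boldsymbol{z}(x)=w(0^+)\boldsymbol{\eta}+\frac{1}{\Gamma(\alpha)}\int_0^x(x-t)^{\alpha-1}\big(A\boldsymbol{z}(t)+w(t)\boldsymbol{g}(t)\big)\,\mathrm{d}t.
\end{equation*}
The core of the argument is then an a priori estimate in the weighted norm $\|\boldsymbol{z}\|_{\lambda,X}=\sup_{0\le x\le X}e^{-\lambda x}\|\boldsymbol{z}(x)\|$, which is finite for each $X$ by continuity of $\boldsymbol{z}$. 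Multiplying the integral equation by $e^{-\lambda x}$ and estimating the two integral contributions by means of the elementary bounds
\begin{equation*}
\int_0^x(x-t)^{\alpha-1}e^{-\lambda(x-t)}\,\mathrm{d}t\le\frac{\Gamma(\alpha)}{\lambda^{\alpha}},\qquad\int_0^x(x-t)^{\alpha-1}e^{ct}\,\mathrm{d}t\le\frac{\Gamma(\alpha)}{c^{\alpha}}e^{cx},
\end{equation*}
the term containing $A\boldsymbol{z}$ yields the self-referential contribution $\tfrac{\|A\|}{\lambda^{\alpha}}\|\boldsymbol{z}\|_{\lambda,X}$, and the forcing term contributes at most $\tfrac{M}{c^{\alpha}}$ provided $\lambda\ge c$. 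Choosing $\lambda$ large enough that $\|A\|/\lambda^{\alpha}\le\tfrac12$ lets me absorb the self-referential term and conclude $\|\boldsymbol{z}\|_{\lambda,X}\le 2\big(\|w(0^+)\boldsymbol{\eta}\|+M/c^{\alpha}\big)$, a bound independent of $X$; hence $\boldsymbol{z}$, and therefore $\boldsymbol{y}$, is $w$-weighted exponentially bounded of order $\lambda$.

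The assertion about the derivative is then immediate from the differential equation itself: the right-hand side $A\boldsymbol{y}+\boldsymbol{g}$ of \eqref{de} is a sum of $w$-weighted exponentially bounded functions---the first because $\boldsymbol{y}$ has just been shown to be such and $A$ is constant, the second by hypothesis---so $\prescript{C}{0}D^{\alpha}_{x;w(x)}\boldsymbol{y}$ is $w$-weighted exponentially bounded as well. I expect the main obstacle to be the weighted-norm estimate of the previous paragraph: one must verify the convolution bound for the singular fractional kernel carefully, choose $\lambda$ so as to absorb the linear self-referential term, and---most importantly---check that the resulting constant is genuinely uniform in $X$, since it is exactly this uniformity that upgrades a local bound into global exponential boundedness on $[0,\infty)$. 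Everything else is a direct application of the conjugation relation and of the composition properties already established in the preliminaries.
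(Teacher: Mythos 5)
Your proof is correct, and its first step---conjugating by $M_{w(x)}$ to turn \eqref{de}--\eqref{ic} into the classical Caputo system $\prescript{C}{0}D^{\alpha}_x\boldsymbol{z}=A\boldsymbol{z}+w\boldsymbol{g}$, $\boldsymbol{z}(0)=w(0^+)\boldsymbol{\eta}$ for $\boldsymbol{z}=w\boldsymbol{y}$---is exactly the reduction the paper makes. The difference lies in what happens next: the paper's entire proof consists of this conjugation together with a citation of \cite[Theorem 3.1]{Kexue}, which asserts precisely that a continuous solution of a classical Caputo system with continuous, exponentially bounded forcing is, along with its Caputo derivative, exponentially bounded. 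You instead reprove that classical ingredient from scratch: you pass to the Volterra integral equation via Lemma~\ref{Lem:RLcompos} (legitimate here since $\prescript{C}{0}D^{\alpha}_x\boldsymbol{z}=w\cdot\prescript{C}{0}D^{\alpha}_{x;w(x)}\boldsymbol{y}=w(A\boldsymbol{y}+\boldsymbol{g})$ is continuous), and run a Bielecki-type estimate in the norm $\sup_{0\le x\le X}e^{-\lambda x}\lVert\cdot\rVert$, choosing $\lambda\ge c$ with $\lVert A\rVert/\lambda^{\alpha}\le\tfrac12$ so that the self-referential term is absorbed uniformly in $X$; both convolution bounds you invoke are valid, and the uniformity in $X$ is indeed the crux that upgrades the local bound to a global one. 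Your handling of the derivative is also slightly different and more economical than relying on the cited theorem: you read its boundedness directly off the equation, since $w\cdot\prescript{C}{0}D^{\alpha}_{x;w(x)}\boldsymbol{y}=A\boldsymbol{z}+w\boldsymbol{g}$ is a sum of exponentially bounded functions. What the paper's route buys is brevity (a one-line proof leaning on the literature); what yours buys is self-containedness, making explicit the analytic content that the citation hides. The only points to tidy are routine: $w$ should be continuous, nonvanishing, with $w(0^+)$ finite, so that the change of variables and the initial condition $\boldsymbol{z}(0)=w(0^+)\boldsymbol{\eta}$ make sense.
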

\begin{proof}
	Using the result proved in \cite[Theorem 3.1]{Kexue} together with the conjugation relations given by Proposition \ref{CRs}, we obtain the required result.
\end{proof}

\section{Weighted fractional calculus with respect to functions} \label{Sec:WFCwrtf}

\begin{defn}[{\cite{agrawal1,jarad-abdeljawad-shah}}] \label{Def:WW}
	Let $\phi:[a,b]\to\mathbb{R}$ be a strictly increasing $C^1$ function, so that $\phi'>0$ everywhere, and let $w\in L^{\infty}(a,b)$ be a weight function. The $w$-weighted Riemann--Liouville fractional integral with respect to $\phi$ of a given function $f\in L^1(a,b)$, to order $\alpha$ in $\mathbb{R}$ or $\mathbb{C}$, is defined by
	\[
	\prescript{RL}{a}I^{\alpha}_{\phi(x);w(x)}f(x)=\frac{1}{\Gamma(\alpha)w(x)}\int_a^x\big(\phi(x)-\phi(t)\big)^{\alpha-1}w(t)f(t)\phi'(t)\,\mathrm{d}t,\qquad x\in(a,b),
	\]
	where we require $\mathrm{Re}(\alpha)>0$, or simply $\alpha>0$ if we assume real order.
	
	Assuming sufficient differentiability conditions on $\phi$ and $w$, we can also define the $w$-weighted Riemann--Liouville fractional derivative with respect to $\phi$ of a given function $f\in AC^n_{\phi}(a,b)$ and the $w$-weighted Caputo fractional derivative with respect to $\phi$ of a given function $f\in C^n_{\phi}(a,b)$, to order $\alpha$ in $\mathbb{R}$ or $\mathbb{C}$, as follows:
	\begin{align*}
	\prescript{RL}{a}D^{\alpha}_{\phi(x);w(x)}f(x)&=\Big(D_{\phi(x);w(x)}\Big)^n\prescript{RL}{a}I^{n-\alpha}_{\phi(x);w(x)}f(x), \\
	\prescript{C}{a}D^{\alpha}_{\phi(x);w(x)}f(x)&=\prescript{RL}{a}I^{n-\alpha}_{\phi(x);w(x)}\Big(D_{\phi(x);w(x)}\Big)^nf(x),
	\end{align*}
	where $\mathrm{Re}(\alpha)\geq0$, or simply $\alpha>0$ if we assume real order, and $n:=\lfloor\mathrm{Re}(\alpha)\rfloor+1$ so that $n-1\leq\mathrm{Re}(\alpha)<n$, and where the first-order operator $D_{\phi(x);w(x)}$ is defined by
	\[
	D_{\phi(x);w(x)}f(x)=\frac{1}{w(x)\phi'(x)}\cdot\frac{\mathrm{d}}{\mathrm{d}x}\big(w(x)f(x)\big),
	\]
	or equivalently $D_{\phi(x);w(x)}=\frac{1}{\phi'(x)}\cdot\left(\frac{\mathrm{d}}{\mathrm{d}x}+\frac{w'(x)}{w(x)}\right)$.
\end{defn}

These operators were introduced by Agrawal \cite{agrawal1,agrawal2} and also studied by Jarad et al \cite{jarad-abdeljawad-shah}. Here we shall consider them from the operational viewpoint, using the notion of conjugation to connect them directly to the original Riemann--Liouville and Caputo operators and thence to derive many results in an easy way without the unnecessary calculations of some previous works \cite{jarad-abdeljawad-shah}.

\subsection{Conjugation relations}

\begin{thm}[{\cite[\S3.3]{kolokoltsov}}] \label{Thm:WWconjug}
	The weighted fractional differintegrals with respect to functions may be written as conjugations of the original fractional differintegrals as follows:
\begin{align*}
\prescript{RL}{a}I^{\alpha}_{\phi(x);w(x)}&= M_{w(x)}^{-1}\circ Q_{\phi}\circ\prescript{RL}{\phi(a)}I^{\alpha}_x\circ Q_{\phi}^{-1}\circ M_{w(x)},\\
\prescript{RL}{a}D^{\alpha}_{\phi(x);w(x)}&=M_{w(x)}^{-1}\circ Q_{\phi}\circ\prescript{RL}{\phi(a)}D^{\alpha}_x\circ Q_{\phi}^{-1}\circ M_{w(x)},\\
\prescript{C}{a}D^{\alpha}_{\phi(x);w(x)}&=M_{w(x)}^{-1}\circ Q_{\phi}\circ\prescript{C}{\phi(a)}D^{\alpha}_x\circ Q_{\phi}^{-1}\circ M_{w(x)},
\end{align*}
	where the multiplication operator $M_{w(x)}$ is defined by \eqref{Mdef} and the composition operator $Q_{\phi}$ is defined in Lemma \ref{Lem:WRTFconjug}.
\end{thm}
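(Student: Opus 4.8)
The plan is to prove all three conjugation identities at once by exhibiting a single invertible operator $N := M_{w(x)}^{-1}\circ Q_{\phi}$, with inverse $N^{-1} = Q_{\phi}^{-1}\circ M_{w(x)}$, and showing that each weighted-with-respect-to-$\phi$ differintegral equals $N\circ(\text{classical operator})\circ N^{-1}$. Since conjugation by a fixed invertible operator distributes over composition --- exactly the principle already used in the proof of Proposition \ref{CRs} and in the remark following Lemma \ref{Lem:WRTFconjug} --- it suffices to check the two building blocks separately: the fractional integral $\prescript{RL}{\phi(a)}I^{\alpha}_x$ and the first-order operator $\frac{\mathrm{d}}{\mathrm{d}x}$. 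Once these are conjugated correctly by $N$, the Riemann--Liouville and Caputo derivatives follow automatically, since they are defined as compositions of powers of $\frac{\mathrm{d}}{\mathrm{d}x}$ (respectively $D_{\phi(x);w(x)}$) with an integral of order $n-\alpha$.

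First I would treat the integral. Reading off the defining formula in Definition \ref{Def:WW}, the weight $w$ enters only through the factor $\frac{w(t)}{w(x)}$ outside the kernel, so directly $\prescript{RL}{a}I^{\alpha}_{\phi(x);w(x)}f = \frac{1}{w(x)}\,\prescript{RL}{a}I^{\alpha}_{\phi(x)}\!\big(w\,f\big)$, that is, $\prescript{RL}{a}I^{\alpha}_{\phi(x);w(x)} = M_{w(x)}^{-1}\circ\prescript{RL}{a}I^{\alpha}_{\phi(x)}\circ M_{w(x)}$. Substituting the unweighted conjugation $\prescript{RL}{a}I^{\alpha}_{\phi(x)} = Q_{\phi}\circ\prescript{RL}{\phi(a)}I^{\alpha}_x\circ Q_{\phi}^{-1}$ from Lemma \ref{Lem:WRTFconjug} gives precisely the first stated identity, namely $N\circ\prescript{RL}{\phi(a)}I^{\alpha}_x\circ N^{-1}$.

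Next I would treat the first-order operator $D_{\phi(x);w(x)}$, verifying the claim $D_{\phi(x);w(x)} = N\circ\frac{\mathrm{d}}{\mathrm{d}x}\circ N^{-1}$. This I would check by the same product-rule computation as in the proof of Proposition \ref{CRs}, which gives $\frac{\mathrm{d}}{\mathrm{d}x}+\frac{w'}{w} = M_{w(x)}^{-1}\circ\frac{\mathrm{d}}{\mathrm{d}x}\circ M_{w(x)}$, combined with the elementary conjugation $\frac{1}{\phi'(x)}\frac{\mathrm{d}}{\mathrm{d}x} = Q_{\phi}\circ\frac{\mathrm{d}}{\mathrm{d}x}\circ Q_{\phi}^{-1}$ (the chain rule applied with $y=\phi(x)$, already implicit in Definition \ref{Def:WRTF}). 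Composing these two conjugations yields $M_{w(x)}^{-1}\circ Q_{\phi}\circ\frac{\mathrm{d}}{\mathrm{d}x}\circ Q_{\phi}^{-1}\circ M_{w(x)}$, and unwinding it on a test function $f$ returns exactly $\frac{1}{w(x)\phi'(x)}\frac{\mathrm{d}}{\mathrm{d}x}\big(w(x)f(x)\big)$, matching Definition \ref{Def:WW}. Raising this to the $n$th power, the inner factors $Q_{\phi}^{-1}\circ M_{w(x)}\circ M_{w(x)}^{-1}\circ Q_{\phi}$ telescope to the identity, so $\big(D_{\phi(x);w(x)}\big)^n = N\circ\big(\frac{\mathrm{d}}{\mathrm{d}x}\big)^n\circ N^{-1}$. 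Composing this with the integral conjugation and invoking the defining relations $\prescript{RL}{\phi(a)}D^{\alpha}_x = \big(\frac{\mathrm{d}}{\mathrm{d}x}\big)^n\,\prescript{RL}{\phi(a)}I^{n-\alpha}_x$ and its Caputo counterpart then delivers the two derivative identities.

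The main obstacle is not computational but a matter of bookkeeping: since $M_{w(x)}$ and $Q_{\phi}$ need not commute, the order of the two conjugating layers is essential. One must conjugate by $Q_{\phi}$ on the inside (closest to the classical operator) and by $M_{w(x)}^{-1}$ on the outside, i.e.\ use $N = M_{w(x)}^{-1}\circ Q_{\phi}$ rather than $Q_{\phi}\circ M_{w(x)}^{-1}$; only this ordering reproduces the operator $D_{\phi(x);w(x)} = \frac{1}{w\phi'}\frac{\mathrm{d}}{\mathrm{d}x}(w\,\cdot\,)$ of Definition \ref{Def:WW}, in which the weight is applied before differentiation and the $\phi'$ factor afterwards. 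Keeping this order straight throughout the telescoping is the only point requiring genuine care.
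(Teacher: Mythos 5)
Your proposal is correct and follows essentially the same route as the paper's own proof: the integral case is obtained by factoring out the weight and invoking Lemma \ref{Lem:WRTFconjug}, the first-order operator $D_{\phi(x);w(x)}$ is shown to equal $M_{w(x)}^{-1}\circ Q_{\phi}\circ\frac{\mathrm{d}}{\mathrm{d}x}\circ Q_{\phi}^{-1}\circ M_{w(x)}$ by the chain rule and product rule, and the derivative identities then follow by telescoping compositions of conjugations. Your closing remark on the ordering of $M_{w(x)}^{-1}$ and $Q_{\phi}$ is a valid point of care, and it is exactly the issue the paper addresses separately in Remark \ref{Rem:whichway}.
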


\begin{proof}
It is clear that the $w$-weighted Riemann--Liouville fractional integral with respect to $\phi$ is given by multiplying by $w$, applying the (unweighted) Riemann--Liouville fractional integral with respect to $\phi$ to the same order, and then dividing by $w$ again. This gives a conjugation relation between the weighted fractional integral with respect to $\phi$ and the original fractional integral with respect to $\phi$, which can be combined with Lemma \ref{Lem:WRTFconjug} as follows:
\[
\prescript{RL}{a}I^{\alpha}_{\phi(x);w(x)}= M_{w(x)}^{-1}\circ \prescript{RL}{a}I^{\alpha}_{\phi(x)}\circ M_{w(x)}=M_{w(x)}^{-1}\circ\Big(Q_\phi\circ\prescript{RL}{\phi(a)}I^{\alpha}_x\circ Q_\phi^{-1}\Big)\circ M_{w(x)}.
\]
Both types of fractional derivatives are compositions of this fractional integral operator with the first-order operator $D_{\phi(x);w(x)}=\frac{1}{\phi'(x)}\cdot\left(\frac{\mathrm{d}}{\mathrm{d}x}+\frac{w'(x)}{w(x)}\right)$ repeated $n$ times, so it will suffice to show that $D_{\phi(x);w(x)}$ also satisfies a conjugation relation, which is easily proved using the product rule and chain rule:
\begin{align*}
M_{w(x)}^{-1}\circ Q_\phi\circ\frac{\mathrm{d}}{\mathrm{d}x}\circ Q_\phi^{-1}\circ M_{w(x)}f(x)&=M_{w(x)}^{-1}\circ\left(\frac{1}{\phi'(x)}\cdot\frac{\mathrm{d}}{\mathrm{d}x}\right)\circ M_{w(x)}f(x) \\
&=M_{w(x)}^{-1}\left(\frac{1}{\phi'(x)}\cdot\frac{\mathrm{d}}{\mathrm{d}x}\Big(w(x)f(x)\Big)\right) \\
&=\frac{1}{w(x)\phi'(x)}\Big(w(x)f'(x)+w'(x)f(x)\Big) \\
&=\frac{1}{\phi'(x)}\left(\frac{\mathrm{d}}{\mathrm{d}x}+\frac{w'(x)}{w(x)}\right)f(x).
\end{align*}
The results follow from composition of conjugation relations.
\end{proof}

\begin{rem} \label{Rem:whichway}
The above result shows that the operators of Definition \ref{Def:WW} are obtained by starting with the original operators of fractional calculus, applying $Q_{\phi}$-conjugation to make the operators with respect to functions, and then applying $M_{w}$-conjugation to make them weighted. There is a definite order to the modifications: we are using the weighted versions of operators with respect to functions. What about the with-respect-to-functions versions of weighted operators, as mentioned briefly in \cite[Eq. (3.36)--3.38)]{kolokoltsov}?

In fact, these two potential general classes of operators come to the same thing, because of the relation $M_{w(x)}\circ Q_{\phi}^{-1}=Q_{\phi}^{-1}\circ M_{w(\phi(x))}$. It can be checked by some elementary calculations that
\[
Q_\phi\circ M_{w(x)}^{-1}\circ\frac{\mathrm{d}}{\mathrm{d}x}\circ M_{w(x)}\circ Q_\phi^{-1}f(x)=\frac{1}{\phi'(x)}\left(\frac{\mathrm{d}}{\mathrm{d}x}+\frac{(w\circ\phi)'(x)}{w\circ\phi(x)}\right)f(x),
\]
confirming that the class of weighted fractional operators with respect to functions is the same class regardless of which order we apply the weighted and with-respect-to-functions aspects of the operators. To write the alternative conjugation relations explicitly, we have
\begin{align*}
\prescript{RL}{a}I^{\alpha}_{\phi(x);w(x)}&=Q_{\phi}\circ M_{w(\phi^{-1}(x))}^{-1}\circ\prescript{RL}{\phi(a)}I^{\alpha}_x\circ M_{w(\phi^{-1}(x))}\circ Q_{\phi}^{-1},\\
\prescript{RL}{a}D^{\alpha}_{\phi(x);w(x)}&=Q_{\phi}\circ M_{w(\phi^{-1}(x))}^{-1}\circ\prescript{RL}{\phi(a)}D^{\alpha}_x\circ M_{w(\phi^{-1}(x))}\circ Q_{\phi}^{-1},\\
\prescript{C}{a}D^{\alpha}_{\phi(x);w(x)}&=Q_{\phi}\circ M_{w(\phi^{-1}(x))}^{-1}\circ\prescript{C}{\phi(a)}D^{\alpha}_x\circ M_{w(\phi^{-1}(x))}\circ Q_{\phi}^{-1},
\end{align*}
derived from the results of Theorem \ref{Thm:WWconjug}.
\end{rem}

The above conjugation results are very useful in understanding weighted fractional calculus with respect to functions. They were mentioned in \cite[Eq. (3.31)--(3.35)]{kolokoltsov}, but they were not used in \cite{jarad-abdeljawad-shah}, many of whose results can be proved much more quickly by directly using the corresponding results from classical fractional calculus together with the conjugation relations.

\begin{prop} \label{Prop:WWanalcont}
The $w$-weighted RL derivative with respect to $\phi$ is the analytic continuation, in the complex variable $\alpha$, of the $w$-weighted RL integral with respect to $\phi$, under the convention that integrals of negative order are derivatives of positive order:
\[
\prescript{RL}{a}D^{\alpha}_{\phi(x);w(x)}f(x)=\prescript{RL}{a}I^{-\alpha}_{\phi(x);w(x)}f(x),\qquad\mathrm{Re}(\alpha)\geq0.
\]
This fact allows both $\prescript{RL}{a}I^{\alpha}_{\phi(x);w(x)}f(x)$ and $\prescript{RL}{a}D^{\alpha}_{\phi(x);w(x)}f(x)$ to be defined for all values of $\alpha\in\mathbb{C}$, in the same way as for the original Riemann--Liouville differintegrals.
\end{prop}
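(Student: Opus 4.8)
The plan is to deduce this result directly from the conjugation relations of Theorem \ref{Thm:WWconjug} together with the classical analytic continuation identity \eqref{RL:analcont}, exactly mirroring the argument used earlier in this section for the plain weighted operators. The essential observation is that all of the operators appearing in the conjugation---namely $M_{w(x)}^{-1}$, $Q_{\phi}$, $Q_{\phi}^{-1}$, and $M_{w(x)}$---are entirely independent of the order $\alpha$, so conjugation by them commutes with analytic continuation in the complex variable $\alpha$.

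First I would recall from Theorem \ref{Thm:WWconjug} that
\[
\prescript{RL}{a}D^{\alpha}_{\phi(x);w(x)}=M_{w(x)}^{-1}\circ Q_{\phi}\circ\prescript{RL}{\phi(a)}D^{\alpha}_x\circ Q_{\phi}^{-1}\circ M_{w(x)},
\]
and then substitute the classical identity \eqref{RL:analcont}, which gives $\prescript{RL}{\phi(a)}D^{\alpha}_x=\prescript{RL}{\phi(a)}I^{-\alpha}_x$ for $\mathrm{Re}(\alpha)\geq0$. Carrying this substitution through the fixed outer operators immediately yields
\[
\prescript{RL}{a}D^{\alpha}_{\phi(x);w(x)}=M_{w(x)}^{-1}\circ Q_{\phi}\circ\prescript{RL}{\phi(a)}I^{-\alpha}_x\circ Q_{\phi}^{-1}\circ M_{w(x)}=\prescript{RL}{a}I^{-\alpha}_{\phi(x);w(x)},
\]
where the final equality is just the integral conjugation relation of Theorem \ref{Thm:WWconjug} read at order $-\alpha$. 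This is precisely the claimed identity.

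To justify the analytic-continuation phrasing rather than merely the pointwise equality of operators, I would note that the central operator $\prescript{RL}{\phi(a)}I^{\alpha}_x$ is analytic in $\alpha$ and admits the Riemann--Liouville derivative as its analytic continuation across $\mathrm{Re}(\alpha)=0$, by \eqref{RL:analcont}. Pre- and post-composition with the $\alpha$-independent operators $M_{w(x)}$, $Q_{\phi}$, and their inverses preserves this analyticity, so $\prescript{RL}{a}I^{\alpha}_{\phi(x);w(x)}$ inherits an analytic dependence on $\alpha$ whose continuation past the imaginary axis is exactly the weighted Riemann--Liouville derivative with respect to $\phi$. I do not anticipate any genuine obstacle here: the only point requiring a moment's care is confirming that conjugation by fixed operators commutes with analytic continuation, which is immediate precisely because the conjugating operators do not depend on $\alpha$.
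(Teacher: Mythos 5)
Your proof is correct and follows exactly the paper's own argument: the paper likewise derives this proposition by combining the conjugation relations of Theorem \ref{Thm:WWconjug} with the classical analytic continuation identity \eqref{RL:analcont}. Your additional remark that conjugation by the $\alpha$-independent operators $M_{w(x)}$ and $Q_{\phi}$ commutes with analytic continuation in $\alpha$ is a nice explicit justification of a step the paper leaves implicit.
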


\begin{proof}
This follows directly from the conjugation relations of Theorem \ref{Thm:WWconjug} together with the corresponding analytic continuation result for Riemann--Liouville differintegrals given at Equation \eqref{RL:analcont}.
\end{proof}

\begin{prop} \label{Prop:WWsemigroup}
	The weighted fractional differintegrals with respect to functions have semigroup properties as follows:
	\begin{align*}
		\prescript{RL}{a}I^{\alpha}_{\phi(x);w(x)} \prescript{RL}{a}I^{\beta}_{\phi(x);w(x)} f(x) &= \prescript{RL}{a}I^{\alpha+\beta}_{\phi(x);w(x)} f(x),\qquad\alpha\in\mathbb{C},\mathrm{Re}(\beta)>0; \\
		\Big(D_{\phi(x);w(x)}\Big)^n \prescript{RL}{a}D^{\alpha}_{\phi(x);w(x)} f(x) &= \prescript{RL}{a}D^{n+\alpha}_{\phi(x);w(x)} f(x),\qquad\alpha\in\mathbb{C},n\in\mathbb{N},
	\end{align*}
	where in both cases $f$ is any function such that the relevant expressions are well-defined.

Note that the operators labelled by $\alpha$ in both of these relations may be either fractional integrals or fractional derivatives, while the one labelled by $\beta$ must be a fractional integral.
\end{prop}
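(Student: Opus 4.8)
The plan is to prove both identities in exactly the same spirit as the earlier (unweighted, non-with-respect-to-functions) semigroup proposition: namely, to invoke the conjugation relations of Theorem \ref{Thm:WWconjug} to reduce each statement to the corresponding classical Riemann--Liouville semigroup property of Lemma \ref{Lem:RLsemigroup}. The essential observation is that conjugation by any fixed invertible operator is compatible with composition, so that a product of conjugated operators telescopes: the conjugating factors on the ``inner'' sides cancel, leaving a single conjugation of the product of the ``inner'' operators. Here the conjugating operator is the composite $Q_{\phi}^{-1}\circ M_{w(x)}$ (with inverse $M_{w(x)}^{-1}\circ Q_{\phi}$), so both the multiplication layer and the composition-with-$\phi$ layer must cancel cleanly in the correct order.

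For the first identity I would write each weighted fractional integral with respect to $\phi$ in its conjugated form from Theorem \ref{Thm:WWconjug}, compose the two operators labelled $\alpha$ and $\beta$, and observe that the adjacent factors $M_{w(x)}\circ M_{w(x)}^{-1}$ and $Q_{\phi}^{-1}\circ Q_{\phi}$ both reduce to the identity. This leaves $M_{w(x)}^{-1}\circ Q_{\phi}\circ\big(\prescript{RL}{\phi(a)}I^{\alpha}_x\,\prescript{RL}{\phi(a)}I^{\beta}_x\big)\circ Q_{\phi}^{-1}\circ M_{w(x)}$, at which point the classical integral semigroup property (first line of Lemma \ref{Lem:RLsemigroup}, valid for $\alpha\in\mathbb{C}$ and $\mathrm{Re}(\beta)>0$) collapses the inner product to $\prescript{RL}{\phi(a)}I^{\alpha+\beta}_x$. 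Recognising the result as the conjugated form of $\prescript{RL}{a}I^{\alpha+\beta}_{\phi(x);w(x)}$ completes this half.

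For the second identity, the only extra ingredient is the conjugation relation for the first-order operator $D_{\phi(x);w(x)}$, which was already established inside the proof of Theorem \ref{Thm:WWconjug}: $D_{\phi(x);w(x)}=M_{w(x)}^{-1}\circ Q_{\phi}\circ\frac{\mathrm{d}}{\mathrm{d}x}\circ Q_{\phi}^{-1}\circ M_{w(x)}$. Raising this to the $n$th power and again telescoping the inner conjugating factors gives $\big(D_{\phi(x);w(x)}\big)^n=M_{w(x)}^{-1}\circ Q_{\phi}\circ\frac{\mathrm{d}^n}{\mathrm{d}x^n}\circ Q_{\phi}^{-1}\circ M_{w(x)}$. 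Composing this with the conjugated form of $\prescript{RL}{a}D^{\alpha}_{\phi(x);w(x)}$ and cancelling the interior factors reduces the claim to $\frac{\mathrm{d}^n}{\mathrm{d}x^n}\,\prescript{RL}{\phi(a)}D^{\alpha}_x=\prescript{RL}{\phi(a)}D^{\alpha+n}_x$, which is precisely the second line of Lemma \ref{Lem:RLsemigroup}. Re-conjugating then yields $\prescript{RL}{a}D^{n+\alpha}_{\phi(x);w(x)}$.

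I do not expect a genuine obstacle here, since the argument is a pure transport-of-structure along the conjugation of Theorem \ref{Thm:WWconjug}; the only point requiring care is the bookkeeping of the two nested conjugating layers, ensuring that $M_{w(x)}$ and $Q_{\phi}$ cancel in the right order and that the power $\big(D_{\phi(x);w(x)}\big)^n$ conjugates to $\frac{\mathrm{d}^n}{\mathrm{d}x^n}$ rather than to some $\phi'$-decorated variant. As in the analogous earlier proposition, validity is understood for any $f$ making the relevant expressions well-defined, so no additional domain hypotheses need to be tracked beyond those inherited from Lemma \ref{Lem:RLsemigroup}.
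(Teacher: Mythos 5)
Your proposal is correct and follows exactly the paper's own argument: the paper proves this proposition as an immediate consequence of the conjugation relations of Theorem \ref{Thm:WWconjug} combined with the classical semigroup properties of Lemma \ref{Lem:RLsemigroup}. Your write-up simply makes explicit the telescoping of the conjugating factors $M_{w(x)}$ and $Q_{\phi}$ (including the conjugation of $D_{\phi(x);w(x)}$ to $\tfrac{\mathrm{d}}{\mathrm{d}x}$ for the derivative identity), which the paper leaves implicit.
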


\begin{proof}
	This is an immediate consequence of Theorem \ref{Thm:WWconjug} (conjugation relations) with Lemma \ref{Lem:RLsemigroup} (Riemann--Liouville semigroup properties).
\end{proof}

\begin{prop}
The following composition properties are valid for weighted Riemann--Liouville and Caputo differintegrals with respect to functions in cases where semigroup properties are not:
\begin{align*}
\prescript{RL}{a}I^{\alpha}_{\phi(x);w(x)}\prescript{RL}{a}D^{\alpha}_{\phi(x);w(x)}f(x)&=f(x)-\sum_{k=1}^n\frac{\big(\phi(x)-\phi(a)\big)^{\alpha-k}}{\Gamma(\alpha-k+1)}\cdot\frac{w(a^+)}{w(x)}\cdot\lim_{x\rightarrow a^+}\prescript{RL}{a}D^{\alpha-k}_{\phi(x);w(x)}f(x); \\
\prescript{RL}{a}I^{\alpha}_{x;w(x)}\prescript{C}{a}D^{\alpha}_{x;w(x)}f(x)&=f(x)-\sum_{k=0}^{n-1}\frac{\big(\phi(x)-\phi(a)\big)^k}{k!}\cdot\frac{w(a^+)}{w(x)}\cdot\lim_{x\rightarrow a^+}\Big(D_{\phi(x);w(x)}\Big)^kf(x),
\end{align*}
where in both cases $\alpha\in\mathbb{C}$ with $\mathrm{Re}(\alpha)>0$ and $n=\lfloor\mathrm{Re}(\alpha)\rfloor+1$ while $f$ is any function such that the relevant expressions are well-defined. We also have the following relationship between the weighted Riemann--Liouville and Caputo derivatives with respect to functions:
\begin{align*}
\prescript{C}{a}D^{\alpha}_{\phi(x);w(x)}f(x)&=\prescript{RL}{a}D^{\alpha}_{\phi(x);w(x)}f(x)-\sum_{k=0}^{n-1}\frac{\big(\phi(x)-\phi(a)\big)^{k-\alpha}}{\Gamma(k-\alpha+1)}\cdot\frac{w(a^+)}{w(x)}\cdot\lim_{x\rightarrow a^+}\Big(D_{\phi(x);w(x)}\Big)^kf(x) \\
&=\prescript{RL}{a}D^{\alpha}_{\phi(x);w(x)}\left(f(x)-\sum_{k=0}^{n-1}\frac{\big(\phi(x)-\phi(a)\big)^k}{k!}\cdot\frac{w(a^+)}{w(x)}\cdot\lim_{x\rightarrow a^+}\Big(D_{\phi(x);w(x)}\Big)^kf(x)\right),
\end{align*}
where $f\in AC^n_{\phi}(a,b)$ and $\alpha,n$ are as before.
\end{prop}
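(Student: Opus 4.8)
The plan is to imitate the carefully worked proof of the corresponding Proposition for weighted (non-with-respect-to-functions) operators given earlier, now conjugating by a composite operator rather than by $M_{w(x)}$ alone. Concretely, I would combine the conjugation relations of Theorem \ref{Thm:WWconjug} with the classical composition identities of Lemma \ref{Lem:RLcompos}. Write $N=Q_\phi^{-1}\circ M_{w(x)}$, so that the left-hand conjugator $M_{w(x)}^{-1}\circ Q_\phi$ is precisely $N^{-1}$, and Theorem \ref{Thm:WWconjug} reads $\prescript{RL}{a}I^{\alpha}_{\phi(x);w(x)}=N^{-1}\circ\prescript{RL}{\phi(a)}I^{\alpha}_x\circ N$, with identical statements for the Riemann--Liouville and Caputo derivatives. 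All the operators involved are linear, so conjugation distributes over the finite sums that will appear.

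For the first identity, I would start from
\[
\prescript{RL}{a}I^{\alpha}_{\phi(x);w(x)}\prescript{RL}{a}D^{\alpha}_{\phi(x);w(x)}f=N^{-1}\circ\prescript{RL}{\phi(a)}I^{\alpha}_x\circ\prescript{RL}{\phi(a)}D^{\alpha}_x\circ Nf,
\]
apply the first relation of Lemma \ref{Lem:RLcompos} (with base point $\phi(a)$) to the inner composition acting on $g:=Nf$, and then apply $N^{-1}$ to both sides. The leading term returns $N^{-1}Nf=f$. In each summand, the power $(y-\phi(a))^{\alpha-k}$ becomes $(\phi(x)-\phi(a))^{\alpha-k}$ after the $Q_\phi$ part of $N^{-1}$ (the substitution $y=\phi(x)$) and acquires a factor $1/w(x)$ from the $M_{w(x)}^{-1}$ part, while the constant boundary limit passes through unchanged.

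The step requiring care — exactly as flagged in the analogous unweighted Proposition above — is the boundary limit. Using the derivative conjugation relation I would rewrite $\lim_{y\to\phi(a)^+}\prescript{RL}{\phi(a)}D^{\alpha-k}_y g(y)$ as $\lim_{y\to\phi(a)^+}\big(N\,\prescript{RL}{a}D^{\alpha-k}_{\phi(x);w(x)}f\big)(y)$. Unwinding $N=Q_\phi^{-1}\circ M_{w(x)}$ shows this equals $\lim_{x\to a^+}w(x)\,\prescript{RL}{a}D^{\alpha-k}_{\phi(x);w(x)}f(x)$, since $\phi^{-1}(y)\to a^+$ as $y\to\phi(a)^+$. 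By continuity of $w$ this factorises as $w(a^+)\lim_{x\to a^+}\prescript{RL}{a}D^{\alpha-k}_{\phi(x);w(x)}f(x)$, which combines with the $1/w(x)$ already produced to give the stated factor $w(a^+)/w(x)$. Assembling these pieces yields the first identity verbatim.

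The second identity and the two forms of the Riemann--Liouville--Caputo relationship follow by the same three-step scheme (conjugate, apply the relevant line of Lemma \ref{Lem:RLcompos}, conjugate back), using the Caputo conjugation relation of Theorem \ref{Thm:WWconjug} and the fact that $N$ intertwines $D_{\phi(x);w(x)}$ with $\frac{\mathrm{d}}{\mathrm{d}x}$, so that $\big(D_{\phi(x);w(x)}\big)^k=N^{-1}\circ\frac{\mathrm{d}^k}{\mathrm{d}x^k}\circ N$. Hence the classical boundary quantities $\frac{\mathrm{d}^k}{\mathrm{d}x^k}g$ at $\phi(a)^+$ transform into $\big(D_{\phi(x);w(x)}\big)^k f$ at $a^+$, with the same $w(a^+)$ factor emerging from the $M_{w(x)}$ inside the limit. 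I expect the only genuine subtlety throughout to be this bookkeeping of the boundary term under the composite conjugation; every other manipulation is routine composition of relations already established.
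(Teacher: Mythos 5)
Your proposal is correct and follows essentially the same route as the paper's proof: conjugate via Theorem \ref{Thm:WWconjug}, apply the classical composition identities of Lemma \ref{Lem:RLcompos} at the base point $\phi(a)$, and then carefully transport the boundary limits back, converting $\lim_{y\to\phi(a)^+}$ into $\lim_{x\to a^+}$ and extracting the factor $w(a^+)$ by continuity of $w$. Packaging the conjugator as $N=Q_\phi^{-1}\circ M_{w(x)}$ is only a notational tidying of the paper's explicit manipulations, and your treatment of the delicate boundary-term bookkeeping matches the paper's exactly.
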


\begin{proof}
We verify the first of the stated relations as follows:
\begin{align*}
\prescript{RL}{a}I^{\alpha}_{\phi(x);w(x)}\prescript{RL}{a}D^{\alpha}_{\phi(x);w(x)}f(x)&=M_{w(x)}^{-1}\circ Q_{\phi}\circ\prescript{RL}{\phi(a)}I^{\alpha}_x\circ\prescript{RL}{\phi(a)}D^{\alpha}_x\circ Q_{\phi}^{-1}\circ M_{w(x)}f(x) \\
&\hspace{-3.5cm}=M_{w(x)}^{-1}\circ Q_{\phi}\left[\prescript{RL}{\phi(a)}I^{\alpha}_x\prescript{RL}{\phi(a)}D^{\alpha}_x\Big(w\big(\phi^{-1}(x)\big)f\big(\phi^{-1}(x)\big)\Big)\right] \\
&\hspace{-3.5cm}=M_{w(x)}^{-1}\circ Q_{\phi}\Bigg[w\big(\phi^{-1}(x)\big)f\big(\phi^{-1}(x)\big) \\ &\hspace{-1cm}-\sum_{k=1}^n\frac{\big(x-\phi(a)\big)^{\alpha-k}}{\Gamma(\alpha-k+1)}\cdot\lim_{x\rightarrow\phi(a)^+}\prescript{RL}{\phi(a)}D^{\alpha-k}_x\Big(w\big(\phi^{-1}(x)\big)f\big(\phi^{-1}(x)\big)\Big)\Bigg] \\
&\hspace{-3.5cm}=\frac{1}{w(x)}\left[w(x)f(x)-\sum_{k=1}^n\frac{\big(\phi(x)-\phi(a)\big)^{\alpha-k}}{\Gamma(\alpha-k+1)}\cdot\lim_{x\rightarrow\phi(a)^+}\prescript{RL}{\phi(a)}D^{\alpha-k}_x\Big(w\big(\phi^{-1}(x)\big)f\big(\phi^{-1}(x)\big)\Big)\right] \\
&\hspace{-3.5cm}=f(x)-\sum_{k=1}^n\frac{\big(\phi(x)-\phi(a)\big)^{\alpha-k}}{\Gamma(\alpha-k+1)w(x)}\cdot\lim_{x\rightarrow\phi(a)^+}\prescript{RL}{\phi(a)}D^{\alpha-k}_x\circ Q_{\phi}^{-1}\circ M_{w(x)}f(x) \\
&\hspace{-3.5cm}=f(x)-\sum_{k=1}^n\frac{\big(\phi(x)-\phi(a)\big)^{\alpha-k}}{\Gamma(\alpha-k+1)w(x)}\cdot\lim_{x\rightarrow a^+}Q_{\phi}\circ\prescript{RL}{\phi(a)}D^{\alpha-k}_x\circ Q_{\phi}^{-1}\circ M_{w(x)}f(x) \\
&\hspace{-3.5cm}=f(x)-\sum_{k=1}^n\frac{\big(\phi(x)-\phi(a)\big)^{\alpha-k}}{\Gamma(\alpha-k+1)}\cdot\frac{w(a^+)}{w(x)}\cdot\lim_{x\rightarrow a^+}\prescript{RL}{a}D^{\alpha-k}_{\phi(x);w(x)}f(x),
\end{align*}
which is the required result. Similar manipulations can be used to prove all of the other stated relations, starting from the corresponding relations for Riemann--Liouville and Caputo differintegrals; we omit the straightforward details.
\end{proof}

\begin{prop} \label{Prop:WWspecfunc}
The weighted Riemann--Liouville and Caputo differintegrals of certain functions with respect to another function are given as follows:
\begin{alignat*}{2}
\prescript{RL}{a}D^{\alpha}_{\phi(x);w(x)}\left(\frac{\big(\phi(x)-\phi(a)\big)^{\beta}}{w(x)}\right)&=\frac{\Gamma(\beta+1)}{\Gamma(\beta-\alpha+1)}\frac{\big(\phi(x)-\phi(a)\big)^{\beta-\alpha}}{w(x)},&&\qquad\alpha\in\mathbb{C},\mathrm{Re}(\beta)>-1; \\
\prescript{C}{a}D^{\alpha}_{\phi(x);w(x)}\left(\frac{E_{\alpha}\Big(\omega\big(\phi(x)-\phi(a)\big)^{\alpha}\Big)}{w(x)}\right)&=\omega\cdot\frac{E_{\alpha}\Big(\omega\big(\phi(x)-\phi(a)\big)^{\alpha}\Big)}{w(x)},&&\qquad\omega\in\mathbb{C},\mathrm{Re}(\alpha)>0,
\end{alignat*}
where $E_{\alpha}$ is the Mittag-Leffler function. Note that the operator in the first identity can be either a fractional integral or a fractional derivative, according to the sign of $\mathrm{Re}(\alpha)$.
\end{prop}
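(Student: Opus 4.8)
The plan is to mirror exactly the one-line proof of Proposition \ref{Prop:specfunc}, replacing the plain weighted conjugation of Proposition \ref{CRs} by the weighted-with-respect-to-$\phi$ conjugation of Theorem \ref{Thm:WWconjug}, and then reducing everything to the classical formulas of Lemma \ref{Lem:RLCspecfunc}. The whole point is that the two test functions have been written precisely so that conjugation peels away all of the extra structure: dividing by $w$ and precomposing with $\phi^{-1}$ turns $\frac{(\phi(x)-\phi(a))^{\beta}}{w(x)}$ into the bare power $(x-\phi(a))^{\beta}$, and turns $\frac{E_{\alpha}(\omega(\phi(x)-\phi(a))^{\alpha})}{w(x)}$ into $E_{\alpha}(\omega(x-\phi(a))^{\alpha})$, both of which are exactly the functions handled by Lemma \ref{Lem:RLCspecfunc} with base point $\phi(a)$ in place of $a$.

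Concretely, for the first identity I would start from Theorem \ref{Thm:WWconjug},
\[
\prescript{RL}{a}D^{\alpha}_{\phi(x);w(x)}\left(\frac{(\phi(x)-\phi(a))^{\beta}}{w(x)}\right)=M_{w(x)}^{-1}\circ Q_{\phi}\circ\prescript{RL}{\phi(a)}D^{\alpha}_x\circ Q_{\phi}^{-1}\circ M_{w(x)}\left(\frac{(\phi(x)-\phi(a))^{\beta}}{w(x)}\right),
\]
and evaluate the right-hand side from the inside out. Applying $M_{w(x)}$ cancels the denominator, leaving $(\phi(x)-\phi(a))^{\beta}$; applying $Q_{\phi}^{-1}$ (that is, replacing $x$ by $\phi^{-1}(x)$) collapses this to $(x-\phi(a))^{\beta}$; the classical Riemann--Liouville formula of Lemma \ref{Lem:RLCspecfunc} with lower limit $\phi(a)$ then produces $\frac{\Gamma(\beta+1)}{\Gamma(\beta-\alpha+1)}(x-\phi(a))^{\beta-\alpha}$; applying $Q_{\phi}$ restores the argument to $\phi(x)-\phi(a)$; and finally $M_{w(x)}^{-1}$ reinserts the factor $1/w(x)$, yielding the stated right-hand side. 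The Caputo identity is obtained identically, using the Caputo conjugation of Theorem \ref{Thm:WWconjug} together with the Mittag-Leffler eigenfunction formula of Lemma \ref{Lem:RLCspecfunc}.

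There is no real obstacle here; the argument is pure bookkeeping of the conjugating operators. The only point demanding a little care is tracking how the substitutions $Q_{\phi}^{\pm1}$ act on the arguments and, in particular, that the base point of the underlying classical operator must be taken as $\phi(a)$ rather than $a$, so that Lemma \ref{Lem:RLCspecfunc} applies with $\mathrm{Re}(\beta)>-1$ (respectively $\mathrm{Re}(\alpha)>0$) exactly as stated. As with Proposition \ref{Prop:specfunc}, the same recipe shows that any known classical differintegral of any particular function extends immediately to this weighted-with-respect-to-$\phi$ setting, with the test function divided by $w(x)$ and its argument written in terms of $\phi(x)-\phi(a)$ on both sides of the identity.
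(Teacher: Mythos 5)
Your proposal is correct and matches the paper's own proof, which likewise obtains both identities directly from the conjugation relations of Theorem \ref{Thm:WWconjug} combined with Lemma \ref{Lem:RLCspecfunc}; your explicit tracking of how $M_{w(x)}$ and $Q_{\phi}^{\pm1}$ act, and that the underlying classical operator must be based at $\phi(a)$, is exactly the bookkeeping the paper leaves implicit.
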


	\begin{proof}
The result follows directly from the conjugation relations of Theorem \ref{Thm:WWconjug}, combined with the results of Lemma \ref{Lem:RLCspecfunc}.
	\end{proof}
	
\begin{rem}
As before, the functions used in Proposition \ref{Prop:WWspecfunc} are just two possible examples that could have been chosen. Any known result for Riemann--Liouville or Caputo differintegrals of any particular functions can now easily be extended to an analogous result on weighted differintegrals with respect to functions, with the functions divided by $w(x)$ on left and right sides of the identity.
\end{rem}

\subsection{Examples}

In this subsection, we discuss some particular choices of functions for both $w(x)$ and $\phi(x)$ which will lead to interesting special cases of weighted fractional calculus with respect to functions, some of which are already well known and studied in the literature.

\begin{ex}
If $w(x)=k$ is a constant, then the operators of $w$-weighted fractional calculus with respect to $\phi$, as given in Definition \ref{Def:WW}, are exactly the same as the operators of the original fractional calculus with respect to $\phi$, as given in Definition \ref{Def:WRTF}. Indeed, this is the only case where $w$-weighted fractional calculus with respect to $\phi$ reduces to fractional calculus with respect to a function, because it is the only case when the first-order operator $D_{\phi(x);w(x)}=\frac{1}{\phi'(x)}\cdot\left(\frac{\mathrm{d}}{\mathrm{d}x}+\frac{w'(x)}{w(x)}\right)$ becomes simply a function times $\frac{\mathrm{d}}{\mathrm{d}x}$.

If $\phi(x)=x$, then the operators of $w$-weighted fractional calculus with respect to $\phi$, as given in Definition \ref{Def:WW}, are exactly the same as the operators of $w$-weighted fractional calculus, as given in Definition \ref{Def:wRL&wC}. Indeed, this is the only case where $w$-weighted fractional calculus with respect to $\phi$ reduces to a case of weighted fractional calculus, because it is the only case when the first-order operator $D_{\phi(x);w(x)}=\frac{1}{\phi'(x)}\cdot\left(\frac{\mathrm{d}}{\mathrm{d}x}+\frac{w'(x)}{w(x)}\right)$ contains simply the operator $\frac{\mathrm{d}}{\mathrm{d}x}$ without any function multiplier.
\end{ex}

\begin{ex} \label{Ex:temperedWRTF}
If $w(x)=e^{\beta\phi(x)}$ is an exponential function of $\phi$, then the operators of $w$-weighted fractional calculus with respect to $\phi$, as given in Definition \ref{Def:WW}, are precisely those of tempered fractional calculus with respect to a function, as defined in \cite{fahad-fernandez-rehman-siddiqi}. Therefore, tempered fractional calculus with respect to a function forms an overlap between the general class of weighted fractional operators with respect to functions \cite{jarad-abdeljawad-shah} and the general class of fractional operators with analytic kernels with respect to functions \cite{oumarou-fahad-djida-fernandez}.
\end{ex}

\begin{ex} \label{Ex:Hadamardtype}
If $w(x)=x^{\beta}$ is a power function and $\phi(x)=\log(x)$ is the natural logarithm function, then the operators of $w$-weighted fractional calculus with respect to $\phi$ become precisely those of Hadamard-type fractional calculus, defined \cite{kilbas,butzer-kilbas-trujillo} as follows:
\begin{align*}
	\prescript{H}{a}I^{\alpha,\beta}_xf(x)&=\frac{1}{\Gamma(\alpha)}\int_a^x\left(\frac{t}{x}\right)^{\beta}\left(\log\frac{x}{t}\right)^{\alpha-1}\frac{f(t)}{t}\,\mathrm{d}t,\qquad\beta\in\mathbb{C},\mathrm{Re}(\alpha)>0; \\
	\prescript{HR}{a}D^{\alpha,\beta}_xf(x)&=\left(x\cdot\frac{\mathrm{d}}{\mathrm{d}x}+\beta\right)^n\prescript{H}{a}I^{n-\alpha,\beta}_xf(x),\qquad\beta\in\mathbb{C},\mathrm{Re}(\alpha)\geq0; \\
	\prescript{HC}{a}D^{\alpha,\beta}_xf(x)&=\prescript{H}{a}I^{n-\alpha,\beta}_x\left(x\cdot\frac{\mathrm{d}}{\mathrm{d}x} + \beta\right)^nf(x),\qquad\beta\in\mathbb{C},\mathrm{Re}(\alpha)\geq0.
\end{align*}
This model of fractional calculus was also studied in \cite{fahad-fernandez-rehman-siddiqi}, where it was determined to be a special case of tempered fractional calculus with respect to a function, namely the case with respect to the logarithm function.
\end{ex}

\begin{rem}
Note that choosing $w(x)=e^{\beta\phi(x)}$ in Example \ref{Ex:temperedWRTF}, rather than $w(x)=e^{\beta x}$ as we did to obtain tempered fractional calculus as a special case of weighted fractional calculus in Example \ref{Ex:tempered}, is necessary because we are defining weighted fractional calculus with respect to functions by applying the $M_w$ conjugation after the $Q_{\phi}$ conjugation. As discussed in Remark \ref{Rem:whichway}, this means that our operators of Definition \ref{Def:WW} are
\[
\text{weighted }(\text{fractional calculus with respect to functions})
\]
and not
\[
(\text{weighted fractional calculus})\text{ with respect to functions}.
\]
Thus, choosing $w(x)=e^{\beta x}$ in Definition \ref{Def:wRL&wC} gives tempered fractional calculus, but choosing $w(x)=e^{\beta x}$ in Definition \ref{Def:WW} does not give the with-respect-to-functions version of tempered fractional calculus: instead, it gives the tempered version of fractional calculus with respect to functions. On the other hand, if we swapped the order of the $M_w$ and $Q_{\phi}$ operators in the definition, then Hadamard-type fractional calculus would be given by $w(x)=e^{\beta x}$ and $\phi(x)=\log(x)$, as it is tempered fractional calculus taken with respect to the natural logarithm function.

Explicitly, the operators of Hadamard-type fractional calculus are given by the following conjugation relations:
\begin{align*}
\prescript{H}{a}I^{\alpha,\beta}_xf(x)&=M_{x^{\beta}}^{-1}\circ Q_{\log}\circ\prescript{RL}{\log(a)}I^{\alpha}_x\circ Q_{\log}^{-1}\circ M_{x^{\beta}} \\
&=Q_{\log}\circ M_{e^{\beta x}}^{-1}\circ\prescript{RL}{\log(a)}I^{\alpha}_x\circ M_{e^{\beta x}}\circ Q_{\log}^{-1}; \\
\prescript{HR}{a}D^{\alpha,\beta}_xf(x)&=M_{x^{\beta}}^{-1}\circ Q_{\log}\circ\prescript{RL}{\log(a)}D^{\alpha}_x\circ Q_{\log}^{-1}\circ M_{x^{\beta}} \\
&=Q_{\log}\circ M_{e^{\beta x}}^{-1}\circ\prescript{RL}{\log(a)}D^{\alpha}_x\circ M_{e^{\beta x}}\circ Q_{\log}^{-1}; \\
\prescript{HC}{a}D^{\alpha,\beta}_xf(x)&=M_{x^{\beta}}^{-1}\circ Q_{\log}\circ\prescript{C}{\log(a)}D^{\alpha}_x\circ Q_{\log}^{-1}\circ M_{x^{\beta}} \\
&=Q_{\log}\circ M_{e^{\beta x}}^{-1}\circ\prescript{C}{\log(a)}D^{\alpha}_x\circ M_{e^{\beta x}}\circ Q_{\log}^{-1},
\end{align*}
which was already known from \cite[Theorem 3.2]{fahad-fernandez-rehman-siddiqi}.
\end{rem}

\begin{ex}
If $w(x)=x^{\sigma\eta}$ and $\phi(x)=x^{\sigma}$ are power functions, with $\mathrm{Re}(\eta)>0$ and $\sigma>0$, then the operators of $w$-weighted fractional calculus with respect to $\phi$, as given in Definition \ref{Def:WW}, are almost exactly those of the so-called Erd\'elyi--Kober fractional calculus, which we define following \cite[\S18.1]{samko-kilbas-marichev} and \cite[\S2.6]{kilbas-srivastava-trujillo} as follows. The fractional integral is
\[
\prescript{E}{a}I^{\alpha;\sigma,\eta}_xf(x)=\frac{\sigma x^{-\sigma(\alpha+\eta)}}{\Gamma(\alpha)}\int_a^x\big(x^{\sigma}-t^{\sigma}\big)^{\alpha-1}t^{\sigma\eta+\sigma-1}f(t)\,\mathrm{d}t,\qquad\mathrm{Re}(\alpha)>0,
\]
while the fractional derivative (of Riemann--Liouville type) is
\[
\prescript{ER}{a}D^{\alpha;\sigma,\eta}_xf(x)=x^{-\sigma\eta}\left(\frac{1}{\sigma x^{\sigma-1}}\cdot\frac{\mathrm{d}}{\mathrm{d}x}\right)^nx^{\sigma(\eta+n)}\prescript{E}{a}I^{n-\alpha;\sigma,\eta+\alpha}_xf(x),\qquad\mathrm{Re}(\alpha)\geq0,
\]
and the fractional derivative of Caputo type, defined more recently in \cite{odibat-baleanu}, is:
\[
\prescript{EC}{a}D^{\alpha;\sigma,\eta}_xf(x)=x^{\sigma n}\prescript{E}{a}I^{n-\alpha;\sigma,\eta+\alpha}_xx^{-\sigma(\eta+\alpha)}\left(\frac{1}{\sigma x^{\sigma-1}}\cdot\frac{\mathrm{d}}{\mathrm{d}x}\right)^nx^{\sigma(\eta+\alpha)}f(x),\qquad\mathrm{Re}(\alpha)\geq0,
\]
where in both of the last two cases the natural number $n$ is defined by $n-1<\mathrm{Re}(\alpha)\leq n$, or in other words $n=\lfloor\mathrm{Re}(\alpha)\rfloor+1$.

It is clear that the Erd\'elyi--Kober integral is related to the $w$-weighted fractional integral with respect to $\phi$ as follows:
\[
\prescript{E}{a}I^{\alpha;\sigma,\eta}_xf(x)=x^{-\sigma\alpha}\cdot\prescript{RL}{a}I^{\alpha}_{x^{\sigma};x^{\sigma\eta}}f(x).
\]
The original Erd\'elyi--Kober derivative can also be related to the $w$-weighted fractional derivative with respect to $\phi$ of Riemann--Liouville type, as follows:
\[
\prescript{ER}{a}D^{\alpha;\sigma,\eta}_xf(x)=x^{\sigma\alpha}\cdot\prescript{RL}{a}D^{\alpha}_{x^{\sigma};x^{\sigma(\eta+\alpha)}}f(x).
\]
And the Caputo--type Erd\'elyi--Kober derivative can be related similarly to the $w$-weighted fractional derivative with respect to $\phi$ of Caputo type, as follows:
\[
\prescript{EC}{a}D^{\alpha;\sigma,\eta}_xf(x)=x^{\sigma\alpha}\cdot\prescript{C}{a}D^{\alpha}_{x^{\sigma};x^{\sigma(\eta+\alpha)}}f(x).
\]
These relationships were already noted in \cite[Eq. (2.6.9)]{kilbas-srivastava-trujillo}, using a different notation of $M_{\eta}$ and $N_{\sigma}$ operators to give a relationship between the Erd\'elyi--Kober integral and the Riemann--Liouville integral, and in \cite[Eq. (16)--(24)]{odibat-baleanu}, using a direct formulation involving substitutions of power functions.

It is also interesting to note that, by our Proposition \ref{Prop:WWanalcont}, the Erd\'elyi--Kober fractional derivative is the unique analytic continuation of the Erd\'elyi--Kober fractional integral $\prescript{E}{a}I^{\alpha;\sigma,\eta}_xf(x)$ from the original domain $\mathrm{Re}(\alpha)>0$ to the whole complex plane for $\alpha$, under the convention that
\[
\prescript{E}{a}I^{\alpha;\sigma,\eta}_xf(x)=\prescript{ER}{a}D^{-\alpha;\sigma,\eta+\alpha}_xf(x),\qquad\mathrm{Re}(\alpha)\leq0.
\]
As usual with analytic continuation relations, this fact will be generally useful in proving results about Erd\'elyi--Kober derivatives when the corresponding results for Erd\'elyi--Kober integrals are already known, simply by extension using the fact of analytic continuation.
\end{ex}

\subsection{Laplace transform and convolution}

We shall now study the $w$-weighted Laplace transform with respect to $\phi$, an integral transform which is ideally suited for studying fractional differential equations which are both weighted and with respect to functions. Most of the results of this subsection were already seen in \cite{jarad-abdeljawad-shah}, but we shall now see how to prove them much more quickly and easily by using operational calculus.

\begin{defn}[\cite{jarad-abdeljawad-shah}]
	Let $ f : [a, \infty) \to \mathbb{C} $ be a real-valued or complex-valued function, and let $w$ and $\phi$ be functions as above. Then the $w$-weighted Laplace transform of $f$ with respect to $\phi$ is defined by
	\begin{equation} \label{WWLT}
		\mathcal{L}_{\phi;w(x)} \left\{ f(x) \right\}=F(s)=\int_{a}^{\infty} e^{-s[\phi(x)-\phi(a)]} w(x) f(x)\phi'(x) \,\mathrm{d}x,
	\end{equation}
	for all $s\in\mathbb{C}$ such that this integral converges.
\end{defn}

\begin{thm} \label{Thm:WWLTcomp}
The $w$-weighted Laplace transform with respect to $\phi$ can be written as a composition of the usual Laplace transform with multiplication and composition operators, as follows:
\begin{equation}
\label{WWLT:comp}
\mathcal{L}_{\phi;w(x)}=\mathcal{L}\circ Q_{\phi(x)-\phi(a)}^{-1}\circ M_{w(x)},
\end{equation}
where $M$ and $Q$ are the operators defined above in \eqref{Mdef} and in Lemma \ref{Lem:WRTFconjug} respectively, and where we assume the increasing function $\phi$ satisfies $\phi(x)\to\infty$ as $x\to\infty$.
\end{thm}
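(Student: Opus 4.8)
The plan is to verify the claimed operator identity \eqref{WWLT:comp} by applying the right-hand side to an arbitrary function $f$, unwinding the composition from the innermost operator outward, and then recovering the defining integral \eqref{WWLT} through a single change of variables. To set up notation, write $\psi(x)=\phi(x)-\phi(a)$, which is strictly increasing (since $\phi$ is) with $\psi(a)=0$, so that it admits an inverse $\psi^{-1}$; the hypothesis $\phi(x)\to\infty$ as $x\to\infty$ guarantees that $\psi$ maps $[a,\infty)$ bijectively onto $[0,\infty)$, which is exactly the domain on which the standard Laplace transform integrates.

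First I would apply $M_{w(x)}$ to $f$, producing the function $x\mapsto w(x)f(x)$. Next I would apply $Q_{\phi(x)-\phi(a)}^{-1}=Q_{\psi}^{-1}$: since $Q_{\psi}g=g\circ\psi$ by the definition in Lemma \ref{Lem:WRTFconjug}, its inverse sends a function $g$ to $g\circ\psi^{-1}$, so the result is the function $u\mapsto w\big(\psi^{-1}(u)\big)\,f\big(\psi^{-1}(u)\big)$. Finally, applying the usual Laplace transform $\mathcal{L}$ yields
\[
\int_0^\infty e^{-su}\, w\big(\psi^{-1}(u)\big)\, f\big(\psi^{-1}(u)\big)\,\mathrm{d}u.
\]

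The key step is then the substitution $u=\psi(x)=\phi(x)-\phi(a)$, giving $\mathrm{d}u=\phi'(x)\,\mathrm{d}x$, with the lower limit $u=0$ corresponding to $x=a$ and the upper limit $u\to\infty$ corresponding to $x\to\infty$ precisely because of the growth assumption $\phi(x)\to\infty$. Since $\psi^{-1}(u)=x$ under this substitution, the integral transforms into
\[
\int_a^\infty e^{-s[\phi(x)-\phi(a)]}\, w(x)\, f(x)\, \phi'(x)\,\mathrm{d}x,
\]
which is exactly $\mathcal{L}_{\phi;w(x)}\{f(x)\}$ as given in \eqref{WWLT}. As $f$ was arbitrary, this proves the operator identity.

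I do not expect any genuine obstacle here, since the result is essentially a disguised change of variables. The only points requiring care are the correct interpretation of $Q_{\phi(x)-\phi(a)}^{-1}$ as right-composition with the inverse function $\psi^{-1}$, and the verification that the limits of integration transform correctly, for which the hypothesis $\phi(x)\to\infty$ is exactly what is needed to carry the upper endpoint to $+\infty$. One could alternatively argue purely at the operator level, observing that conjugation by $Q_{\psi}$ turns the weighted kernel $e^{-s[\phi(x)-\phi(a)]}\phi'(x)$ into the plain exponential kernel of $\mathcal{L}$, but the direct substitution is the cleanest route and makes the role of each hypothesis transparent.
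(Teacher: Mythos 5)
Your proof is correct and takes essentially the same route as the paper's: apply the three operators on the right-hand side to an arbitrary $f$ one by one, then recover the defining integral \eqref{WWLT} via the substitution $u=\phi(x)-\phi(a)$, with the growth hypothesis $\phi(x)\to\infty$ carrying the upper limit to $+\infty$. The only cosmetic difference is your shorthand $\psi(x)=\phi(x)-\phi(a)$, where the paper writes the inverse explicitly as $\phi^{-1}\big(x+\phi(a)\big)$, which is the same map.
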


\begin{proof}
We check the effect of applying the three operators from the right-hand side, one by one, on an appropriate function $f$:
\begin{align*}
M_{w(x)}f(x)&=w(x)f(x); \\
Q_{\phi(x)-\phi(a)}^{-1}\circ M_{w(x)}f(x)&=w\Big(\phi^{-1}\big(x+\phi(a)\big)\Big)f\Big(\phi^{-1}\big(x+\phi(a)\big)\Big); \\
\mathcal{L}\circ Q_{\phi(x)-\phi(a)}^{-1}\circ M_{w(x)}f(x)&=\int_0^{\infty}e^{-sx}w\Big(\phi^{-1}\big(x+\phi(a)\big)\Big)f\Big(\phi^{-1}\big(x+\phi(a)\big)\Big)\,\mathrm{d}x \\
&=\int_{a}^{\infty}e^{-s[\phi(t)-\phi(a)]}w(t)f(t)\phi'(t)\,\mathrm{d}t,
\end{align*}
where in the last step we substituted $x=\phi(t)-\phi(a)$ and used the assumption on the infinite limiting behaviour of $\phi$.
\end{proof}

\begin{cor}
If $\phi:[0,\infty)\to[0,\infty)$ is an increasing bijection, then the $w$-weighted Laplace transform with respect to $\phi$ can be written as a composition of the usual Laplace transform with multiplication and composition operators, as follows:
\[
\mathcal{L}_{\phi;w(x)}=\mathcal{L}\circ Q_{\phi}^{-1}\circ M_{w(x)},
\]
where $M_{w(x)}$ and $Q_{\phi}$ are the operators defined above in \eqref{Mdef} and  in Lemma \ref{Lem:WRTFconjug} respectively.
\end{cor}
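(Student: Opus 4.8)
The plan is to obtain this statement as an immediate specialisation of Theorem \ref{Thm:WWLTcomp}, so the only real work lies in checking that the hypotheses of that theorem are met and that its general composition formula collapses to the claimed one. First I would record the two elementary consequences of assuming that $\phi:[0,\infty)\to[0,\infty)$ is an increasing bijection. On the one hand, the lower endpoint of the domain is now $a=0$, and since $\phi$ is increasing its value $\phi(0)$ is the infimum of the range; surjectivity onto $[0,\infty)$ then forces this infimum to equal $0$, so that $\phi(a)=\phi(0)=0$. On the other hand, again because $\phi$ is increasing with range exactly $[0,\infty)$, we automatically have $\phi(x)\to\infty$ as $x\to\infty$, which is precisely the limiting hypothesis required to invoke Theorem \ref{Thm:WWLTcomp}.

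With these two facts in hand, the proof is essentially a substitution. Since $\phi(a)=0$, the shift appearing in the general theorem disappears: $\phi(x)-\phi(a)=\phi(x)$, and hence the shifted composition operator reduces to $Q_{\phi(x)-\phi(a)}=Q_{\phi}$. Substituting this identification directly into the formula $\mathcal{L}_{\phi;w(x)}=\mathcal{L}\circ Q_{\phi(x)-\phi(a)}^{-1}\circ M_{w(x)}$ of Theorem \ref{Thm:WWLTcomp} yields $\mathcal{L}_{\phi;w(x)}=\mathcal{L}\circ Q_{\phi}^{-1}\circ M_{w(x)}$, which is exactly the claimed relation.

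I do not anticipate any genuine obstacle here, as the corollary is a clean special case; the one point deserving a sentence of justification is the claim that an increasing bijection of $[0,\infty)$ fixes the origin, since that is what removes the $\phi(a)$-shift and lets $Q_{\phi}$ appear in place of the more cumbersome $Q_{\phi(x)-\phi(a)}$. Everything else follows by direct appeal to the already-established Theorem \ref{Thm:WWLTcomp}.
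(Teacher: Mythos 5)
Your proposal is correct and matches the paper's own proof, which simply states that the corollary is the case $a=0$, $\phi(0)=0$ of Theorem \ref{Thm:WWLTcomp}. Your additional verifications --- that surjectivity and monotonicity force $\phi(0)=0$, and that bijectivity onto $[0,\infty)$ gives $\phi(x)\to\infty$ as $x\to\infty$ --- are exactly the details the paper leaves implicit.
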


\begin{proof}
This is the case $a=0$, $\phi(0)=0$ of the preceding theorem.
\end{proof}

\begin{cor}
If $a\in\mathbb{R}$ is any real number and $\phi:[a,\infty)\to[\phi(a),\infty)$ is a bijection, then the Laplace transform with respect to $\phi$ can be written in terms of the usual Laplace transform as follows:
\[
\mathcal{L}_{\phi}=\mathcal{L}\circ Q_{\phi(x)-\phi(a)}^{-1}.
\]
\end{cor}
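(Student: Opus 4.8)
The plan is to recognize this corollary as the special case $w(x)\equiv 1$ of the preceding Theorem \ref{Thm:WWLTcomp}, in exactly the same spirit as the corollary just above it was obtained as the case $a=0$, $\phi(0)=0$. First I would note that the unweighted Laplace transform with respect to $\phi$ is, by definition, the weighted transform \eqref{WWLT} taken with the constant weight $w\equiv 1$; that is, $\mathcal{L}_{\phi}=\mathcal{L}_{\phi;1}$.

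Next I would observe that when $w\equiv 1$ the multiplication operator $M_{w(x)}$ of \eqref{Mdef} satisfies $M_{1}f=f$, so it is the identity operator on functions. Substituting $w\equiv 1$ into the composition formula \eqref{WWLT:comp} therefore collapses the rightmost factor, giving
\[
\mathcal{L}_{\phi}=\mathcal{L}_{\phi;1}=\mathcal{L}\circ Q_{\phi(x)-\phi(a)}^{-1}\circ M_{1}=\mathcal{L}\circ Q_{\phi(x)-\phi(a)}^{-1},
\]
which is precisely the claimed identity.

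The only point requiring any care is to confirm that the hypotheses of Theorem \ref{Thm:WWLTcomp} are in force, namely that $\phi$ is increasing with $\phi(x)\to\infty$ as $x\to\infty$. This is automatic under the present assumptions: $\phi$ is an increasing $C^1$ function (as fixed throughout the section), and its being a bijection of $[a,\infty)$ onto $[\phi(a),\infty)$ forces $\phi(x)\to\infty$ as $x\to\infty$ by surjectivity. I expect no genuine obstacle in this argument, since the corollary is a pure specialization and inherits both its conclusion and its validity directly from the theorem.
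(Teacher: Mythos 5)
Your proof is correct and matches the paper's own argument exactly: the paper also proves this corollary by taking the case $w(x)=1$ of Theorem \ref{Thm:WWLTcomp}, under which $M_{w(x)}$ becomes the identity operator. Your additional verification that bijectivity onto $[\phi(a),\infty)$ forces $\phi(x)\to\infty$ is a sensible check of the theorem's hypothesis, which the paper leaves implicit.
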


\begin{proof}
This is the case $w(x)=1$ of the preceding theorem. Note that it also provides a generalisation of the results of \cite{fahad-fernandez-rehman-siddiqi}, where it was assumed that $\phi(0)=0$. The operational-calculus approach in \cite{fahad-fernandez-rehman-siddiqi} was therefore used only with fractional differintegrals having lower limit $0$, but now those results can be straightforwardly extended to fractional differintegral operators with general lower limit $a\in\mathbb{R}$.
\end{proof}

As corollaries of the relation \eqref{WWLT:comp}, we obtain the following results.

\begin{cor}
If $w$ and $f$ are such that $w(x)f(x)$ is of $\phi$-exponential order $c$ (as defined in \cite{fahad-fernandez-rehman-siddiqi}), then the $w$-weighted Laplace transform $F(s)$ of $f$ with respect to $\phi$ exists for $\mathrm{Re}(s)>c$.
\end{cor}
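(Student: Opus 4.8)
The plan is to reduce this statement to the classical convergence criterion for the ordinary Laplace transform, using the operational factorisation already established in Theorem \ref{Thm:WWLTcomp}. By that theorem we have
\[
\mathcal{L}_{\phi;w(x)}=\mathcal{L}\circ Q_{\phi(x)-\phi(a)}^{-1}\circ M_{w(x)},
\]
so the convergence of $F(s)=\mathcal{L}_{\phi;w(x)}\{f\}$ is exactly the convergence of the ordinary Laplace transform applied to the function $g:=Q_{\phi(x)-\phi(a)}^{-1}\big(M_{w(x)}f\big)$. The whole task is therefore to show that the hypothesis on $w(x)f(x)$ transforms into the statement that $g$ is of ordinary exponential order $c$.

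First I would unwind the operators explicitly. Applying $M_{w(x)}$ gives $w(x)f(x)$, and applying $Q_{\phi(x)-\phi(a)}^{-1}$ gives
\[
g(x)=w\Big(\phi^{-1}\big(x+\phi(a)\big)\Big)f\Big(\phi^{-1}\big(x+\phi(a)\big)\Big),
\]
as already computed in the proof of Theorem \ref{Thm:WWLTcomp}. By the definition of $\phi$-exponential order from \cite{fahad-fernandez-rehman-siddiqi}, there are constants $M>0$ and $x_0$ with $|w(x)f(x)|\leq M e^{c[\phi(x)-\phi(a)]}$ for all large $x$. Substituting the argument $\phi^{-1}(x+\phi(a))$, for which $\phi(\,\cdot\,)-\phi(a)=x$, I obtain $|g(x)|\leq M e^{cx}$ for all sufficiently large $x$; that is, $g$ is of ordinary exponential order $c$. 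The classical fact that the Laplace transform of a (piecewise continuous) function of exponential order $c$ converges absolutely for $\mathrm{Re}(s)>c$ then yields the claimed existence of $F(s)$ on that half-plane.

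The only point requiring care is the matching of definitions: one must confirm that the notion of $\phi$-exponential order in \cite{fahad-fernandez-rehman-siddiqi} is precisely the condition that becomes ordinary exponential order under the change of variable $x\mapsto\phi(x)-\phi(a)$ effected by $Q_{\phi(x)-\phi(a)}^{-1}$. Since that change of variable is exactly the one built into the composition operator, this is immediate rather than an obstacle, and no estimate beyond the substitution above is needed. I expect the entire argument to be a short corollary of Theorem \ref{Thm:WWLTcomp} together with the standard Laplace-transform convergence theorem, with the verification of the exponential-order bookkeeping being the only substantive (but routine) step.
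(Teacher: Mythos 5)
Your proof is correct and follows exactly the route the paper intends: the paper states this result as an immediate corollary of the composition relation $\mathcal{L}_{\phi;w(x)}=\mathcal{L}\circ Q_{\phi(x)-\phi(a)}^{-1}\circ M_{w(x)}$ from Theorem \ref{Thm:WWLTcomp}, with the change of variable turning $\phi$-exponential order into ordinary exponential order, which is precisely your argument. You have merely written out the bookkeeping (the substitution $t=\phi^{-1}(x+\phi(a))$ and the matching of the exponential-order definitions) that the paper leaves implicit.
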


\begin{cor}
The inverse $w$-weighted Laplace transform with respect to $\phi$ exists for any function which has a classical inverse Laplace transform, and it may be written as follows:
	\[
		\mathcal{L}_{\phi;w(x)}^{-1}=M_{w(x)}^{-1}\circ Q_{\phi(x)-\phi(a)}\circ\mathcal{L}^{-1},
	\]
	or in other words
	\[
		\mathcal{L}^{-1}_{\phi;w(x)} \left\{F(s)\right\} =\frac{1}{2\pi i w(x)} \int_{c-i\infty}^{c+i\infty} e^{s[\phi(x)-\phi(a)]} F(s) \,\mathrm{d}s.
	\]
\end{cor}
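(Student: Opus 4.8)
The plan is to invert the operational relation \eqref{WWLT:comp} of Theorem \ref{Thm:WWLTcomp}, in exactly the same spirit as Corollary \ref{IWLT:cor} was obtained by inverting \eqref{WLT:comp}. Since that theorem expresses $\mathcal{L}_{\phi;w(x)}=\mathcal{L}\circ Q_{\phi(x)-\phi(a)}^{-1}\circ M_{w(x)}$ as a composition of three operators, its inverse is the composition of the three individual inverses taken in the opposite order, using $(ABC)^{-1}=C^{-1}B^{-1}A^{-1}$:
\[
\mathcal{L}_{\phi;w(x)}^{-1}=\big(M_{w(x)}\big)^{-1}\circ\big(Q_{\phi(x)-\phi(a)}^{-1}\big)^{-1}\circ\mathcal{L}^{-1}=M_{w(x)}^{-1}\circ Q_{\phi(x)-\phi(a)}\circ\mathcal{L}^{-1},
\]
which is precisely the first displayed formula of the statement.

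First I would justify that each of the three factors is genuinely invertible on the relevant function space, so that the composition above makes sense. The multiplication operator $M_{w(x)}$ is invertible because $w$ is a nonvanishing weight function, its inverse being multiplication by $1/w$; and the composition operator $Q_{\phi(x)-\phi(a)}$ is invertible because $\phi$ is strictly increasing, hence a bijection onto its image, so that $x\mapsto\phi(x)-\phi(a)$ has a well-defined inverse map. Consequently, whenever $\mathcal{L}^{-1}$ is defined on a given transform $F$, the composite $M_{w(x)}^{-1}\circ Q_{\phi(x)-\phi(a)}\circ\mathcal{L}^{-1}$ is also defined, which establishes the stated existence claim.

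To obtain the explicit Bromwich-type integral, I would substitute the classical inverse Laplace transform $\mathcal{L}^{-1}\{F(s)\}=\frac{1}{2\pi i}\int_{c-i\infty}^{c+i\infty}e^{sx}F(s)\,\mathrm{d}s$ and then apply the remaining two operators in turn: the operator $Q_{\phi(x)-\phi(a)}$ replaces the output variable $x$ by $\phi(x)-\phi(a)$ inside the exponential, while $M_{w(x)}^{-1}$ divides the whole expression by $w(x)$, yielding
\[
\mathcal{L}^{-1}_{\phi;w(x)}\{F(s)\}=\frac{1}{2\pi i w(x)}\int_{c-i\infty}^{c+i\infty}e^{s[\phi(x)-\phi(a)]}F(s)\,\mathrm{d}s,
\]
as required.

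There is no serious obstacle here, the result being a formal consequence of the operator-inversion identity together with Theorem \ref{Thm:WWLTcomp}. The only points requiring a moment's care are confirming the invertibility of the two auxiliary operators under the standing hypotheses on $w$ and $\phi$, and, if one wishes to be rigorous about the contour integral, checking that applying $Q_{\phi(x)-\phi(a)}$ and $M_{w(x)}^{-1}$ commutes with the Bromwich integral; this holds because both operators act only on the output variable $x$ and leave the integration variable $s$ untouched.
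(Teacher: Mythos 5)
Your proposal is correct and matches the paper's approach exactly: the paper states this corollary as an immediate consequence of the operational relation \eqref{WWLT:comp}, obtained precisely by inverting the composition $\mathcal{L}_{\phi;w(x)}=\mathcal{L}\circ Q_{\phi(x)-\phi(a)}^{-1}\circ M_{w(x)}$ factor by factor in reverse order, just as you do. Your added remarks on the invertibility of $M_{w(x)}$ and $Q_{\phi(x)-\phi(a)}$ and on passing the two operators through the Bromwich integral are sound details that the paper leaves implicit.
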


\begin{cor}
	If $f$ is a function which has a classical Laplace transform $F(s)$, then the $w$-weighted Laplace transform with respect to $\phi$ of the function $\left(M_{w(x)}^{-1}\circ Q_{\phi(x)-\phi(a)}f\right)(x)=\frac{f\big(\phi(x)-\phi(a)\big)}{w(x)}$ is also $F(s)$:
	\[
	\mathcal{L}\{f(x)\}=F(s)\quad\Rightarrow\quad\mathcal{L}_{\phi;w(x)}\left\{\frac{f\big(\phi(x)-\phi(a)\big)}{w(x)}\right\}=F(s).
	\]
\end{cor}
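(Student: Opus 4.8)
The plan is to deduce the identity immediately from the operational composition formula \eqref{WWLT:comp} of Theorem \ref{Thm:WWLTcomp}, by noticing that the function whose transform we wish to compute is precisely the image of $f$ under the two operators appearing on the right-hand side of that formula. This corollary is the with-respect-to-$\phi$ generalisation of Corollary \ref{Coroll:genfn}, and admits an essentially identical one-line operational proof.

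First I would observe that, by definition, the argument function is $g=M_{w(x)}^{-1}\circ Q_{\phi(x)-\phi(a)}f$. Applying $\mathcal{L}_{\phi;w(x)}$ and substituting the factorisation $\mathcal{L}_{\phi;w(x)}=\mathcal{L}\circ Q_{\phi(x)-\phi(a)}^{-1}\circ M_{w(x)}$ from Theorem \ref{Thm:WWLTcomp}, I would write
\[
\mathcal{L}_{\phi;w(x)}\{g\}=\mathcal{L}\circ Q_{\phi(x)-\phi(a)}^{-1}\circ M_{w(x)}\circ M_{w(x)}^{-1}\circ Q_{\phi(x)-\phi(a)}f.
\]
The key step is then the cancellation of the adjacent inverse pairs: $M_{w(x)}\circ M_{w(x)}^{-1}$ and $Q_{\phi(x)-\phi(a)}^{-1}\circ Q_{\phi(x)-\phi(a)}$ each collapse to the identity operator, leaving $\mathcal{L}f=F(s)$, as required.

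No serious obstacle arises, since the argument is a purely formal consequence of the operator factorisation. The only care needed is with the well-definedness of the inverse operators: $w$ must be nonvanishing for $M_{w(x)}^{-1}$ to act, and $\phi$ must be an increasing bijection onto its range for $Q_{\phi(x)-\phi(a)}$ to be invertible, both of which hold under the standing hypotheses on $w$ and $\phi$. Finally, the assumption that $f$ has a classical Laplace transform $F(s)$ ensures that $\mathcal{L}f$ converges, and reading the substitution in the proof of Theorem \ref{Thm:WWLTcomp} in reverse confirms that the weighted transform of $g$ converges on the same half-plane and equals $F(s)$ there.
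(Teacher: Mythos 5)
Your proof is correct and is precisely the argument the paper intends: the corollary is stated as an immediate consequence of the factorisation $\mathcal{L}_{\phi;w(x)}=\mathcal{L}\circ Q_{\phi(x)-\phi(a)}^{-1}\circ M_{w(x)}$ from Theorem \ref{Thm:WWLTcomp}, applied to $M_{w(x)}^{-1}\circ Q_{\phi(x)-\phi(a)}f$ so that the adjacent inverse pairs cancel and leave $\mathcal{L}f=F(s)$. Your added remarks on invertibility of $M_{w(x)}$ and $Q_{\phi(x)-\phi(a)}$ and on convergence are sensible but go slightly beyond what the paper records, which treats the corollary as purely formal.
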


The following theorem gives the natural relationship between the weighted Laplace transform with respect to a function and the operators of weighted fractional calculus with respect to a function.

\begin{thm}
	Let $ \alpha >0 $ and let $f$ be a continuous function on $[0,\infty)$ which is of $w$-weighted $\phi$-exponential order, where $w$ is a continuous weight function and $\phi:[a,\infty)\to[\phi(a),\infty)$ is an increasing bijection. Then we have the following results.
	\begin{enumerate}
	\item \[\mathcal{L}_{\phi;w(x)} \left\{ \left( 	\prescript{RL}{a}I^{\alpha}_{\phi(x);w(x)}f\right)(x) \right\}= s^{-\alpha}\mathcal{L}_{\phi;w(x)} \left\{f(x)\right\}.\]
	\item Let $ n-1\leq\mathrm{Re}(\alpha)<n\in\mathbb{Z}^+ $, and assume that $\prescript{RL}{a}D^{\alpha}_{x;w(x)}f$ is continuous on $[a,\infty)$ and of $w$-weighted $\phi$-exponential order. Then
	\[\mathcal{L}_{\phi;w(x)}\left\{\left(\prescript{RL}{a}D^{\alpha}_{\phi(x);w(x)}f\right)(x) \right\} = s^{\alpha}\mathcal{L}_{\phi;w(x)}\{f(x)\} - w(a^+)\sum_{i=0}^{n-1} s^{n-i-1}\left(\prescript{RL}{a}I^{n-i-\alpha}_{\phi(x);w(x)}f\right)(a^+).\]
	\item Let $ n-1\leq\mathrm{Re}(\alpha)<n\in\mathbb{Z}^+ $, and assume that $\prescript{RL}{a}D^{n}_{x;w(x)}f$ is continuous on $[a,\infty)$ and of $w$-weighted $\phi$-exponential order. Then
	\[\mathcal{L}_{\phi;w(x)} \left\{ \left(\prescript{C}{a}D^{\alpha}_{\phi(x);w(x)}f\right)(x) \right\} =s^{\alpha} \mathcal{L}_{\phi;w(x)}\{f(x)\} - w(a^+)\sum_{i=0}^{n-1} s^{\alpha-i-1}\Big(D_{\phi(x);w(x)}\Big)^if(a^+).\]
	\end{enumerate}
\end{thm}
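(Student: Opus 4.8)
The plan is to mirror the proof of Theorem \ref{inttrans} for the non-with-respect-to-functions case, now combining the composition relation \eqref{WWLT:comp} for $\mathcal{L}_{\phi;w(x)}$, the conjugation relations of Theorem \ref{Thm:WWconjug}, and the three classical Laplace transform formulas for $\prescript{RL}{0}I^{\alpha}_x$, $\prescript{RL}{0}D^{\alpha}_x$ and $\prescript{C}{0}D^{\alpha}_x$ already listed in the proof of Theorem \ref{inttrans}. For the first identity I would start by writing $\mathcal{L}_{\phi;w(x)}=\mathcal{L}\circ Q_{\phi(x)-\phi(a)}^{-1}\circ M_{w(x)}$ and $\prescript{RL}{a}I^{\alpha}_{\phi(x);w(x)}=M_{w(x)}^{-1}\circ Q_{\phi}\circ\prescript{RL}{\phi(a)}I^{\alpha}_x\circ Q_{\phi}^{-1}\circ M_{w(x)}$, so that in the composition $\mathcal{L}_{\phi;w(x)}\circ\prescript{RL}{a}I^{\alpha}_{\phi(x);w(x)}$ the inner pair $M_{w(x)}\circ M_{w(x)}^{-1}$ cancels immediately.

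The key algebraic observation is that the two composition operators do not match exactly: the Laplace transform supplies $Q_{\phi(x)-\phi(a)}^{-1}$ while the conjugation supplies $Q_{\phi}$, and their composite $Q_{\phi(x)-\phi(a)}^{-1}\circ Q_{\phi}$ acts on a function $g$ by $g(x)\mapsto g(x+\phi(a))$, i.e. as translation by $\phi(a)$. Since the classical Riemann--Liouville and Caputo operators are translation-covariant in their lower limit (an elementary consequence of the substitution $t\mapsto t+\phi(a)$ in the defining integral), this translation converts $\prescript{RL}{\phi(a)}I^{\alpha}_x$ into $\prescript{RL}{0}I^{\alpha}_x$, and likewise for the two derivatives, precisely the lower-limit-$0$ operators to which the classical Laplace formulas apply. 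After applying the classical formula $\mathcal{L}\{\prescript{RL}{0}I^{\alpha}_x g\}=s^{-\alpha}\mathcal{L}\{g\}$ and reassembling the translation together with the remaining $Q_{\phi}^{-1}\circ M_{w(x)}$ into $\mathcal{L}_{\phi;w(x)}$ via \eqref{WWLT:comp} again, the first identity $s^{-\alpha}\mathcal{L}_{\phi;w(x)}\{f\}$ drops out. In the special case $\phi(a)=0$ there is no translation and the argument is an exact copy of the proof of Theorem \ref{inttrans}.

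The main obstacle, as in the earlier composition propositions, will be the boundary terms in parts 2 and 3. For the Riemann--Liouville derivative, applying the classical formula produces terms $s^{n-i-1}\big(\prescript{RL}{0}I^{n-i-\alpha}_x g\big)(0^+)$ with $g=Q_{\phi(x)-\phi(a)}^{-1}\circ M_{w(x)}f$, and I would need to show that each such term equals $w(a^+)\, s^{n-i-1}\big(\prescript{RL}{a}I^{n-i-\alpha}_{\phi(x);w(x)}f\big)(a^+)$. This I would do by running the integral conjugation relation of Theorem \ref{Thm:WWconjug} together with the same translation identity in reverse, obtaining $\prescript{RL}{0}I^{n-i-\alpha}_x g=Q_{\phi(x)-\phi(a)}^{-1}\circ M_{w(x)}\circ\prescript{RL}{a}I^{n-i-\alpha}_{\phi(x);w(x)}f$, and then evaluating at $x\to0^+$, where $Q_{\phi(x)-\phi(a)}^{-1}$ sends the argument to $a^+$ and the multiplication operator contributes the factor $w(a^+)$.

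For the Caputo case the same mechanism applies, except that the classical boundary terms are ordinary derivatives $g^{(i)}(0^+)$. Here I would use the first-order conjugation $D_{\phi(x);w(x)}=M_{w(x)}^{-1}\circ Q_{\phi}\circ\frac{\mathrm{d}}{\mathrm{d}x}\circ Q_{\phi}^{-1}\circ M_{w(x)}$ from the proof of Theorem \ref{Thm:WWconjug}, raised to the $i$th power and combined with the fact that $\frac{\mathrm{d}}{\mathrm{d}x}$ commutes with translation, to identify $g^{(i)}(0^+)=w(a^+)\big(D_{\phi(x);w(x)}\big)^i f(a^+)$, which yields the stated sum $w(a^+)\sum_{i=0}^{n-1}s^{\alpha-i-1}\big(D_{\phi(x);w(x)}\big)^i f(a^+)$. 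The only genuine care needed throughout is the consistent handling of this $\phi(a)$-translation linking the lower limit $\phi(a)$ of the conjugated classical operators with the lower limit $0$ of the classical Laplace transform; once that is pinned down, every step is an instance of the conjugation-plus-known-result template used repeatedly in the preceding propositions.
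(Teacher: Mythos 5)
Your proposal is correct and follows essentially the same route as the paper's proof: writing both $\mathcal{L}_{\phi;w(x)}$ and the weighted differintegrals via their composition/conjugation relations, cancelling the inner $M_{w(x)}\circ M_{w(x)}^{-1}$, identifying $Q_{\phi(x)-\phi(a)}^{-1}\circ Q_{\phi}$ as translation by $\phi(a)$ and commuting it past the Riemann--Liouville operators to shift the lower limit from $\phi(a)$ to $0$, then applying the classical Laplace formulas and converting the boundary terms back through the conjugation so that the factor $w(a^+)$ emerges at $x\to a^+$. The only difference is that you spell out the Caputo boundary-term identification $g^{(i)}(0^+)=w(a^+)\big(D_{\phi(x);w(x)}\big)^if(a^+)$ explicitly, which the paper dismisses with ``similarly for the Caputo case.''
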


\begin{proof}
All of these results follow from combining the composition results of Theorem \ref{Thm:WWconjug} and Theorem \ref{Thm:WWLTcomp} with the classical facts on Laplace transforms of fractional integrals and derivatives which were quoted previously in the proof of Theorem \ref{inttrans}. However, in this case the $Q$ operators are a little more tricky to deal with, since we have both $Q_{\phi}$ and $Q_{\phi(x)-\phi(a)}$, closely related but not the same operator, involved in the same manipulation.

For the fractional integral, we have:
\begin{align*}
\mathcal{L}_{\phi;w(x)}\circ\prescript{RL}{a}I^{\alpha}_{\phi(x);w(x)}&=\left(\mathcal{L}\circ Q_{\phi(x)-\phi(a)}^{-1}\circ M_{w(x)}\right)\circ\left(M_{w(x)}^{-1}\circ Q_{\phi}\circ\prescript{RL}{\phi(a)}I^{\alpha}_x\circ Q_{\phi}^{-1}\circ M_{w(x)}\right) \\
&=\mathcal{L}\circ\left(Q_{\phi(x)-\phi(a)}^{-1}\circ Q_{\phi}\right)\circ\prescript{RL}{\phi(a)}I^{\alpha}_x\circ Q_{\phi}^{-1}\circ M_{w(x)},
\end{align*}
and the composition $Q_{\phi(x)-\phi(a)}^{-1}\circ Q_{\phi}$ is equivalent to a linear substitution:
\[
\left(Q_{\phi(x)-\phi(a)}^{-1}\circ Q_{\phi}\right)g(x)=g\left(x+\phi(a)\right),
\]
so this operator has the following effect on the fractional integral operator:
\begin{align*}
Q_{\phi(x)-\phi(a)}^{-1}\circ Q_{\phi}\circ\prescript{RL}{\phi(a)}I^{\alpha}_x\;g(x)&=\frac{1}{\Gamma(\alpha)}\int_{\phi(a)}^{x+\phi(a)}\left(x+\phi(a)-t\right)^{\alpha-1}g(t)\,\mathrm{d}t \\
&=\frac{1}{\Gamma(\alpha)}\int_{0}^{x}(x-u)^{\alpha-1}g\left(u+\phi(a)\right)\,\mathrm{d}u \\
&=\prescript{RL}{0}I^{\alpha}_x\circ Q_{\phi(x)-\phi(a)}^{-1}\circ Q_{\phi}\;g(x),
\end{align*}
and therefore 
\begin{align*}
\mathcal{L}_{\phi;w(x)}\circ\prescript{RL}{a}I^{\alpha}_{\phi(x);w(x)}&=\mathcal{L}\circ\prescript{RL}{0}I^{\alpha}_x\circ\left(Q_{\phi(x)-\phi(a)}^{-1}\circ Q_{\phi}\right)\circ Q_{\phi}^{-1}\circ M_{w(x)} \\
&=M_{s^{\alpha}}^{-1}\circ\mathcal{L}\circ Q_{\phi(x)-\phi(a)}^{-1}\circ M_{w(x)} \\
&=M_{s^{\alpha}}^{-1}\circ\mathcal{L}_{\phi;w(x)},
\end{align*}
which is the stated relation for fractional integrals.

For fractional derivatives, since the linear substitution $Q_{\phi(x)-\phi(a)}^{-1}\circ Q_{\phi}$ commutes with the $\frac{\mathrm{d}}{\mathrm{d}x}$ operator, the same manipulations as above give rise to:
\begin{align*}
\mathcal{L}_{\phi;w(x)}\circ\prescript{RL}{a}D^{\alpha}_{\phi(x);w(x)}&=\mathcal{L}\circ\prescript{RL}{0}D^{\alpha}_x\circ Q_{\phi(x)-\phi(a)}^{-1}\circ M_{w(x)}, \\
\mathcal{L}_{\phi;w(x)}\circ\prescript{C}{a}D^{\alpha}_{\phi(x);w(x)}&=\mathcal{L}\circ\prescript{C}{0}D^{\alpha}_x\circ Q_{\phi(x)-\phi(a)}^{-1}\circ M_{w(x)}.
\end{align*}
For the Riemann--Liouville case, we then have
\begin{align*}
\mathcal{L}_{\phi;w(x)} \left\{\prescript{C}{a}D^{\alpha}_{\phi(x);w(x)}f(x) \right\} &=\mathcal{L} \left\{\prescript{C}{a}D^{\alpha}_{\phi(x);w(x)}\Big[Q_{\phi(x)-\phi(a)}^{-1}\circ M_{w(x)}f(x)\Big] \right\} \\
&\hspace{-3cm}=s^{\alpha} \mathcal{L}\{f(x)\} - \sum_{i=0}^{n-1} s^{n-i-1}\lim_{x\to0^+}\left(\prescript{}{0}I^{n-i-\alpha}_{x}\Big[Q_{\phi(x)-\phi(a)}^{-1}\circ M_{w(x)}f(x)\Big]\right) \\
&\hspace{-3cm}=s^{\alpha} \mathcal{L}\{f(x)\} - \sum_{i=0}^{n-1} s^{n-i-1}\lim_{x\to a^+}\Big(Q_{\phi(x)-\phi(a)}\circ\prescript{}{0}I^{n-i-\alpha}_{x}\circ Q_{\phi(x)-\phi(a)}^{-1}\circ M_{w(x)}f(x)\Big) \\
&\hspace{-3cm}=s^{\alpha} \mathcal{L}\{f(x)\} - w(a^+)\sum_{i=0}^{n-1} s^{n-i-1}\left(\prescript{RL}{a}I^{n-i-\alpha}_{\phi(x);w(x)}f\right)(a^+),
\end{align*}
which is the stated relation, and similarly for the Caputo case.
\end{proof}

\begin{defn} [\cite{jarad-abdeljawad-shah}]
The $ w $-weighted $\phi$-convolution of two real-valued or complex-valued functions $f,g:[a,\infty)\to\mathbb{C}$ is the function $f\ast_{w(x)} g$ defined by 
	\begin{equation}\label{WWconvolution}
		\Big(f*_{\phi;w(x)} g\Big)(x)= \frac{1}{w(x)}\int_{a}^{x} w\Big(\phi^{-1}\big(\phi(x)+\phi(a)-\phi(t)\big)\Big)f\Big(\phi^{-1}\big(\phi(x)+\phi(a)-\phi(t)\big)\Big) w(t)g(t)\phi'(t)\,\mathrm{d}t.
	\end{equation}
\end{defn}

\begin{thm} \label{Thm:WWconvol}
The $w$-weighted $\phi$-convolution is related to classical convolution via the following formula:
	\begin{equation}
		f\ast_{\phi;w(x)} g = M_{w(x)}^{-1}\circ Q_{\phi(x)-\phi(a)}\left(Q_{\phi(x)-\phi(a)}^{-1}\circ M_{w(x)}f\right)\ast\left(Q_{\phi(x)-\phi(a)}^{-1}\circ M_{w(x)}g\right),
	\end{equation}
	or equivalently, using the notation of binary operations and conjugation,
	\[
	\ast_{\phi;w(x)}=M_{w(x)}^{-1}\circ Q_{\phi(x)-\phi(a)}\circ\ast\circ\left(Q_{\phi(x)-\phi(a)}^{-1}\circ M_{w(x)},Q_{\phi(x)-\phi(a)}^{-1}\circ M_{w(x)}\right),
	\]
	where the $M$ and $Q$ operators are as in \eqref{Mdef} and Lemma \ref{Lem:WRTFconjug} respectively. 
\end{thm}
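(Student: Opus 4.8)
The plan is to recognise the stated identity as a single conjugation. Writing $T=Q_{\phi(x)-\phi(a)}^{-1}\circ M_{w(x)}$, which is invertible with $T^{-1}=M_{w(x)}^{-1}\circ Q_{\phi(x)-\phi(a)}$, the claim is precisely that $\ast_{\phi;w(x)}=T^{-1}\circ\ast\circ(T,T)$. This is the exact analogue of the flat-case relation \eqref{RSbwWCnC}, with the operator $M_{w(x)}$ now replaced by the composite $T$ that already appears in the Laplace-transform relation \eqref{WWLT:comp}. Thus it suffices to compute the right-hand side $T^{-1}\big((Tf)\ast(Tg)\big)$ explicitly and check that it reproduces the defining integral \eqref{WWconvolution}.

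First I would apply $T$ to each argument. Writing $\psi(x)=\phi(x)-\phi(a)$ so that $\psi^{-1}(x)=\phi^{-1}(x+\phi(a))$, we obtain
\[
(Tf)(x)=w\big(\phi^{-1}(x+\phi(a))\big)\,f\big(\phi^{-1}(x+\phi(a))\big),
\]
and similarly for $Tg$. Substituting these into the classical convolution $\big((Tf)\ast(Tg)\big)(x)=\int_0^x (Tf)(x-t)(Tg)(t)\,\mathrm{d}t$ produces an integral whose integrand is a product of four factors, namely $w,f$ evaluated at $\phi^{-1}(x-t+\phi(a))$ together with $w,g$ evaluated at $\phi^{-1}(t+\phi(a))$.

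Next I would apply $Q_{\phi(x)-\phi(a)}$, that is, replace $x$ by $\phi(x)-\phi(a)$ throughout; the nested argument then collapses via $\phi(x)-\phi(a)-t+\phi(a)=\phi(x)-t$, so the first factor becomes $w,f$ evaluated at $\phi^{-1}(\phi(x)-t)$ and the upper limit becomes $\phi(x)-\phi(a)$. The key step is then the change of variables $t=\phi(s)-\phi(a)$, equivalently $s=\phi^{-1}(t+\phi(a))$, which is valid precisely because $\phi$ is strictly increasing and hence a bijection; it sends the limits $0,\phi(x)-\phi(a)$ to $a,x$, produces the Jacobian factor $\phi'(s)\,\mathrm{d}s$, turns $\phi^{-1}(t+\phi(a))$ into $s$, and turns $\phi^{-1}(\phi(x)-t)$ into $\phi^{-1}(\phi(x)+\phi(a)-\phi(s))$. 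Finally, applying $M_{w(x)}^{-1}$ divides through by $w(x)$, and the result is exactly \eqref{WWconvolution}.

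The genuinely non-trivial part is this change of variables together with the bookkeeping of the nested $\phi^{-1}$ compositions; everything else is routine substitution. I would remark that the monotonicity assumption on $\phi$ is what simultaneously guarantees the invertibility of $T$ and the validity of the substitution. One could alternatively motivate the identity by noting that $T$ is exactly the operator intertwining $\mathcal{L}_{\phi;w(x)}$ with the classical transform $\mathcal{L}$, so that conjugating classical convolution by $T$ is essentially forced by the classical convolution theorem and the injectivity of the Laplace transform; but the direct verification above is the cleanest route to the stated algebraic relation.
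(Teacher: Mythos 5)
Your proposal is correct and follows essentially the same route as the paper: both compute the classical convolution of $Q_{\phi(x)-\phi(a)}^{-1}\circ M_{w(x)}f$ and $Q_{\phi(x)-\phi(a)}^{-1}\circ M_{w(x)}g$ explicitly, perform the substitution $t=\phi(s)-\phi(a)$ (the paper does it before applying $Q_{\phi(x)-\phi(a)}$, you after, which is immaterial since the two steps commute), and then divide by $w(x)$ to recover the defining integral \eqref{WWconvolution}. Your closing remark about the identity being forced by the Laplace intertwining relation \eqref{WWLT:comp} is a nice observation, but as you note, the direct verification is the actual proof, and it matches the paper's.
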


\begin{proof}
Clearly, the convolution of $Q_{\phi(x)-\phi(a)}^{-1}\circ M_{w(x)}f$ and $Q_{\phi(x)-\phi(a)}^{-1}\circ M_{w(x)}g$ can be written as
\begin{align*}
\left(Q_{\phi(x)-\phi(a)}^{-1}\circ M_{w(x)}f\right)&\ast\left(Q_{\phi(x)-\phi(a)}^{-1}\circ M_{w(x)}g\right)(x) \\
&\hspace{-2.5cm}=\int_0^xw\Big(\phi^{-1}\big(x-u+\phi(a)\big)\Big)f\Big(\phi^{-1}\big(x-u+\phi(a)\big)\Big) \\ &\hspace{3cm}\times w\Big(\phi^{-1}\big(u+\phi(a)\big)\Big)g\Big(\phi^{-1}\big(u+\phi(a)\big)\Big)\,\mathrm{d}u \\
&\hspace{-2.5cm}=\int_a^{\phi^{-1}(x+\phi(a))}w\Big(\phi^{-1}\big(x+2\phi(a)-\phi(t)\big)\Big)f\Big(\phi^{-1}\big(x+2\phi(a)-\phi(t)\big)\Big)w(t)g(t)\phi'(t)\,\mathrm{d}t,
\end{align*}
where we substituted $u=\phi(t)-\phi(a)$ in the integral. Applying $Q_{\phi(x)-\phi(a)}$ to this expression turns it into
\[
\int_a^{x}w\Big(\phi^{-1}\big(\phi(x)+\phi(a)-\phi(t)\big)\Big)f\Big(\phi^{-1}\big(\phi(x)+\phi(a)-\phi(t)\big)\Big)w(t)g(t)\phi'(t)\,\mathrm{d}t,
\]
and then dividing by $w(x)$ gives exactly the $w$-weighted $\phi$-convolution $\left(f*_{\phi;w(x)} g\right)(x)$, as required.
\end{proof}

The result of Theorem \ref{Thm:WWconvol} for the $w$-weighted $\phi$-convolution, combined with the previous result of Theorem \ref{Thm:WWLTcomp} for the $w$-weighted $\phi$-Laplace transform, enable the following result, already seen in \cite[Theorem 5.9]{jarad-abdeljawad-shah}, to be proved immediately by composition of operators.

\begin{cor}
If $f,g:[0,X]\to\mathbb{C}$ are piecewise continuous and of $ w $-weighted $\phi$-exponential order $c>0 $, then
	\begin{equation*}
		\mathcal{L}_{\phi;w(x)} \left\{ f\ast_{\phi;w(x)} g\right\}=\mathcal{L}_{\phi;w(x)}\{f\}\mathcal{L}_{\phi;w(x)}\{g\}.
	\end{equation*}
\end{cor}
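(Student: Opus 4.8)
The plan is to prove this by pure composition of operators, exactly mirroring the strategy used throughout the paper: express everything in terms of the classical Laplace transform and convolution, where the convolution theorem is already known, and let the conjugating operators cancel. Writing $Q:=Q_{\phi(x)-\phi(a)}$ for brevity, the two ingredients I would use are the composition formula $\mathcal{L}_{\phi;w(x)}=\mathcal{L}\circ Q^{-1}\circ M_{w(x)}$ from Theorem \ref{Thm:WWLTcomp}, and the conjugation formula $\ast_{\phi;w(x)}=M_{w(x)}^{-1}\circ Q\circ\ast\circ\big(Q^{-1}\circ M_{w(x)},\,Q^{-1}\circ M_{w(x)}\big)$ from Theorem \ref{Thm:WWconvol}.

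First I would apply $\mathcal{L}_{\phi;w(x)}$ to $f\ast_{\phi;w(x)}g$ and substitute both formulas, obtaining the operator composition
\[
\mathcal{L}_{\phi;w(x)}\circ\ast_{\phi;w(x)}=\mathcal{L}\circ Q^{-1}\circ M_{w(x)}\circ M_{w(x)}^{-1}\circ Q\circ\ast\circ\big(Q^{-1}\circ M_{w(x)},\,Q^{-1}\circ M_{w(x)}\big).
\]
The key observation is that the middle block $Q^{-1}\circ M_{w(x)}\circ M_{w(x)}^{-1}\circ Q$ collapses to the identity, leaving simply $\mathcal{L}\circ\ast\circ\big(Q^{-1}\circ M_{w(x)},\,Q^{-1}\circ M_{w(x)}\big)$. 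In other words,
\[
\mathcal{L}_{\phi;w(x)}\big\{f\ast_{\phi;w(x)}g\big\}=\mathcal{L}\Big\{\big(Q^{-1}\circ M_{w(x)}f\big)\ast\big(Q^{-1}\circ M_{w(x)}g\big)\Big\}.
\]
Next I would invoke the classical Laplace convolution theorem to split this as $\mathcal{L}\{Q^{-1}\circ M_{w(x)}f\}\cdot\mathcal{L}\{Q^{-1}\circ M_{w(x)}g\}$, and then recognise each factor, again via Theorem \ref{Thm:WWLTcomp}, as $\mathcal{L}_{\phi;w(x)}\{f\}$ and $\mathcal{L}_{\phi;w(x)}\{g\}$ respectively, which yields the claimed identity.

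The only real care needed is in justifying the use of the classical convolution theorem, which requires the transformed functions $Q^{-1}\circ M_{w(x)}f$ and $Q^{-1}\circ M_{w(x)}g$ to be piecewise continuous and of ordinary exponential order $c$. This is where the hypotheses enter, and in fact the notion of $w$-weighted $\phi$-exponential order is tailored precisely so that $f$ having $w$-weighted $\phi$-exponential order $c$ is equivalent to $Q^{-1}\circ M_{w(x)}f$ having classical exponential order $c$; piecewise continuity is likewise preserved under composition with the increasing bijection $\phi$ and multiplication by the continuous weight $w$. Thus the hypotheses translate exactly into the conditions demanded by the classical theorem, and no genuine analytic obstacle arises — the entire content of the corollary is the algebraic cancellation of the conjugating operators, so I would expect the write-up to be very short.
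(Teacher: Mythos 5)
Your proposal is correct and is essentially identical to the paper's own argument: the paper proves this corollary ``immediately by composition of operators,'' combining the composition formula for $\mathcal{L}_{\phi;w(x)}$ (Theorem \ref{Thm:WWLTcomp}) with the conjugation formula for $\ast_{\phi;w(x)}$ (Theorem \ref{Thm:WWconvol}) and the classical convolution theorem, exactly as you do. Your write-up merely makes explicit the operator cancellation and the translation of the exponential-order hypotheses, which the paper leaves implicit.
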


Finally, the $w$-weighted $\phi$-Laplace transform can be used to establish a regularity condition for the solutions to weighted fractional differential equations with respect to functions, for example the following initial value problem:
\begin{align}
	\prescript{C}{0}D^{\alpha}_{\phi(x);w(x)}\boldsymbol{y}(x)&=A\boldsymbol{y}(x)+\boldsymbol{g}(x), \qquad x\geq0, \label{deWW}
	\\ \boldsymbol{y}(0)&= \boldsymbol{\eta},  \label{icWW}
\end{align}
where $0<\alpha<1$ is fixed and $ A = \left(a_{ij}\right) $ is an $n\times n$ constant matrix and $\boldsymbol{g}$ is a continuous $n$-dimensional vector-valued function and $\boldsymbol{\eta}$ is a constant $n$-dimensional vector.

\begin{thm}
	Assume that the system $\eqref{deWW}-\eqref{icWW}$ has a unique continuous solution $ \boldsymbol{y} $. If $ \boldsymbol{g}$ is continuous on $[0,\infty) $ and $ w $-weighted $\phi$-exponentially bounded, then $ \boldsymbol{y} $ and $ \prescript{C}{0}D^{\alpha}_{x;w(x)}\boldsymbol{y} $ are both $ w $-weighted $\phi$-exponentially bounded too.
\end{thm}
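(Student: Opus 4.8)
The plan is to mirror the argument used for the analogous weighted (but not with-respect-to-$\phi$) system proved earlier: namely, to conjugate the entire initial value problem into a classical Caputo system to which the known regularity result \cite[Theorem 3.1]{Kexue} applies directly, and then to conjugate the conclusion back. The conjugation relations of Theorem \ref{Thm:WWconjug} are precisely what make this transfer possible.

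First I would set $\boldsymbol{z} = Q_{\phi}^{-1}\circ M_{w(x)}\boldsymbol{y}$ and apply the operator $Q_{\phi}^{-1}\circ M_{w(x)}$ to both sides of \eqref{deWW}. Using the Caputo conjugation relation $\prescript{C}{0}D^{\alpha}_{\phi(x);w(x)} = M_{w(x)}^{-1}\circ Q_{\phi}\circ\prescript{C}{\phi(0)}D^{\alpha}_x\circ Q_{\phi}^{-1}\circ M_{w(x)}$ from Theorem \ref{Thm:WWconjug}, the left-hand side telescopes to $\prescript{C}{\phi(0)}D^{\alpha}_x\boldsymbol{z}$. Since $A$ is a constant matrix it commutes with both $M_{w(x)}$ and $Q_{\phi}^{-1}$, so the right-hand side becomes $A\boldsymbol{z} + \tilde{\boldsymbol{g}}$, where $\tilde{\boldsymbol{g}} = Q_{\phi}^{-1}\circ M_{w(x)}\boldsymbol{g}$; the initial condition \eqref{icWW} transforms into $\boldsymbol{z}(\phi(0)) = w(0)\boldsymbol{\eta}$. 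This is a classical Caputo initial value problem in $\boldsymbol{z}$ with lower limit $\phi(0)$, of exactly the form treated in \cite{Kexue}.

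Next I would record the dictionary between the two settings: by the definition of $w$-weighted $\phi$-exponential order, a function is $w$-weighted $\phi$-exponentially bounded if and only if its image under $Q_{\phi}^{-1}\circ M_{w(x)}$ is classically exponentially bounded, and continuity is preserved in both directions because $w$ is continuous and $\phi$ is an increasing bijection. Hence the hypotheses translate into: $\tilde{\boldsymbol{g}}$ is continuous and classically exponentially bounded, and the transformed system has a unique continuous solution $\boldsymbol{z}$. Applying \cite[Theorem 3.1]{Kexue} then yields that $\boldsymbol{z}$ and $\prescript{C}{\phi(0)}D^{\alpha}_x\boldsymbol{z}$ are both classically exponentially bounded. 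Conjugating back through $\boldsymbol{y} = M_{w(x)}^{-1}\circ Q_{\phi}\boldsymbol{z}$ and $\prescript{C}{0}D^{\alpha}_{\phi(x);w(x)}\boldsymbol{y} = M_{w(x)}^{-1}\circ Q_{\phi}\big(\prescript{C}{\phi(0)}D^{\alpha}_x\boldsymbol{z}\big)$ delivers the claimed $w$-weighted $\phi$-exponential boundedness of $\boldsymbol{y}$ and of its weighted Caputo derivative. The only real obstacle is the bookkeeping of this last step: confirming that the $w$-weighted $\phi$-exponential-order condition is genuinely equivalent to classical exponential order under the conjugation, and that the harmless translation of the lower limit from $0$ to $\phi(0)$ does not affect exponential boundedness. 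Once this equivalence is pinned down, the regularity result of \cite{Kexue} transfers without further work.
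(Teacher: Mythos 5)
Your proposal is correct and follows essentially the same route as the paper's own (very brief) proof: the paper likewise invokes the conjugation relations of Theorem \ref{Thm:WWconjug} to transfer the system to a classical Caputo initial value problem, applies \cite[Theorem 3.1]{Kexue}, and conjugates back. Your write-up simply makes explicit the bookkeeping (the substitution $\boldsymbol{z}=Q_{\phi}^{-1}\circ M_{w(x)}\boldsymbol{y}$, the translation of the lower limit to $\phi(0)$, and the equivalence of the exponential-order notions under conjugation) that the paper leaves implicit.
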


\begin{proof}
	Using the result proved in \cite[Theorem 3.1]{Kexue} together with the conjugation relations given by Theorem \ref{Thm:WWconjug}, we obtain the required result.
\end{proof}

\section{Conclusions} \label{Sec:concl}

This paper has constructed a formal mathematical analysis of the structure of the so-called weighted fractional calculus, and also of the same operators taken with respect to an increasing function. Both of these frameworks can be seen as general classes of operators, and within each general class there is an operational conjugation relation which connects every operator in the class back to the basic Riemann--Liouville and Caputo fractional calculi via multiplication and composition. Although these conjugation relations have been mentioned previously in the literature, some previous research on weighted fractional calculus with respect to functions had failed to take them into account. The conjugation relations make the whole theory more efficient by enabling shorter simpler proofs based on knowledge of the RL and Caputo fractional calculi.

The general class of weighted fractional calculus operators includes such well-known types of operators as those of tempered fractional calculus and Kober--Erd\'elyi fractional calculus. The even more general class of weighted fractional calculus operators with respect to functions includes operators such as those of Hadamard-type fractional calculus and Erd\'elyi--Kober fractional calculus, as well as their extensions to be taken with respect to an arbitrary increasing function. Thus, the theory developed in this paper can be useful in the understanding of various types of fractional calculus that have already appeared in the literature and been found with applications worthy of discussion. The whole general class itself, weighted fractional calculus with respect to functions, has been found useful in probability theory and variational calculus.

Corresponding to each fractional differintegral operator in these general classes, there is both a Laplace-type integral transform and a convolution operation. These too can be expressed as operational compositions (not necessarily conjugations) of the classical Laplace transform and convolution together with some multiplication or composition operators. Just like the classical Laplace transform, the weighted Laplace transform (with respect to a function) can be used to solve differential equations with appropriate differential operators, as we have demonstrated briefly in this paper.

The current work is purely theoretical, but much remains to be done in the direction of fractional differential equations and applications. It is presumed that many methods used for solving fractional differential equations, either analytically or numerically, can be extended to corresponding methods which apply to solve weighted fractional differential equations or weighted fractional differential equations with respect to functions. Our current study, particularly the conjugation relations proved herein, will be vital in establishing such extensions.

In the pure mathematical direction, it is possible to extend these general classes of operators still further. The class of fractional differintegrals with analytic kernels and the class of fractional differintegrals with respect to functions have already been combined into a single superclass; the same is done here with the class of weighted fractional differintegrals and the class of fractional differintegrals with respect to functions. Future work may focus on combining the class of weighted differintegrals with the class of differintegrals with analytic kernels, in order to obtain yet another superclass of operators in fractional calculus.



\begin{thebibliography}{99}

\bibitem{abdo-abdeljawad-ali-shah-jarad}
Abdo, M.S., Abdeljawad, T., Ali, M., Shah, K., Jarad, F., (2020), Existence of positive solutions for weighted fractional order differential equations, Chaos Solitons Fractals 141, 110341.

\bibitem{agrawal1}
Agrawal, O.P., (2012), Some generalized fractional calculus operators and their applications in integral equations, Fract. Calc. Appl. Anal. 15(4), pp. 700--711.

\bibitem{agrawal2}
Agrawal, O.P., (2012), Generalized multiparameters fractional variational calculus, Int. J. Differ. Equ. 2012, 521750.

\bibitem{alrefai}
Al-Refai, M., (2020), On weighted Atangana--Baleanu fractional operators, Adv. Differ. Equ. 2020, 3.

\bibitem{baleanu-fernandez}
Baleanu, D., Fernandez, A., (2019), On fractional operators and their classifications, Mathematics 7(9), 830.

\bibitem{bayrak-demir-ozbilge}
Bayrak, M.A., Demir, A., \"Ozbilge, E., (2020), On solution of fractional partial differential equation by the weighted fractional operator, {Alex. Eng. J.} 59(6), pp. 4805--4819.

\bibitem{butzer-kilbas-trujillo}
Butzer, P.L., Kilbas, A.A., Trujillo, J.J., (2002), Fractional calculus in the Mellin setting and Hadamard-type fractional integrals, J. Math. Anal. Appl. 269, pp. 1--27.

\bibitem{diethelm}
Diethelm, K., (2010), The Analysis of Fractional Differential Equations: An Application-Oriented Exposition Using Differential Operators of Caputo Type, Springer, Heidelberg.

\bibitem{fahad-fernandez-rehman-siddiqi}
Fahad, H.M., Fernandez, A., ur Rehman, M., Siddiqi, M., (2021), Tempered and Hadamard-type fractional calculus with respect to functions, Mediterr. J. Math. 18, 143.

\bibitem{fernandez-ozarslan-baleanu}
Fernandez, A., \"Ozarslan, M.A., Baleanu, D., (2019), On fractional calculus with general analytic kernels, Appl. Math. Comput. 354, pp. 248--265.

\bibitem{fernandez-ustaoglu}
Fernandez, A., Ustao\u{g}lu, C., (2020), On some analytic properties of tempered fractional calculus, J. Comput. Appl. Math. 366, 112400.

\bibitem{hilfer-luchko}
Hilfer, R., Luchko, Y. (2019), Desiderata for Fractional Derivatives and Integrals, Mathematics 7, 149.

\bibitem{kilbas-srivastava-trujillo}
Kilbas, A.A., Srivastava, H.M., Trujillo, J.J., (2006), Theory and Applications of Fractional Differential Equations, Elsevier, North-Holland.

\bibitem{jarad-abdeljawad-shah}
Jarad, F., Abdeljawad, T., Shah, K., (2020), On the weighted fractional operators of a function with respect to another function, Fractals 28(08), 2040011.

\bibitem{kilbas}
Kilbas, A.A., (2001), Hadamard-Type Fractional Calculus, J. Korean Math. Soc. 38(6), pp. 1191--1204.

\bibitem{kolokoltsov}
Kolokoltsov, V.N., (2019), The probabilistic point of view on the generalized fractional partial differential equations, Fract. Calc. Appl. Anal. 22(3), pp. 543--600.

\bibitem{li-deng-zhao}
Li, C., Deng, W., Zhao, L., (2019), Well-posedness and numerical algorithm for the tempered fractional ordinary differential equations, {Discrete Contin. Dyn. Syst. Ser. B} 24 (4), pp. 1989--2015.

\bibitem{Kexue}		    
Li, K., Peng, J., (2011), Laplace transform and fractional differential equations, Appl. Math. Lett. 24, pp. 2019--2023.

\bibitem{liu-yang-feng-geng}
Liu, J.G., Yang, X.J., Feng, Y.Y., Geng, L.L., (2021), Fundamental results to the weighted Caputo-type differential operator, Appl. Math. Lett. 121, 107421.

\bibitem{meerschaert-sabzikar-chen}
Meerschaert, M.M., Sabzikar, F., Chen, J., (2015), Tempered fractional calculus, J. Comput. Phys. 293, pp. 14--28.

\bibitem{miller-ross}
Miller, K.S., Ross, B., (1993), An Introduction to the Fractional Calculus and Fractional Differential Equations, John Wiley, New York.

\bibitem{odibat-baleanu}
Odibat, Z., Baleanu, D., (2021), On a New Modification of the Erd\'elyi--Kober Fractional Derivative, Fractal Fract. 5(3), 121.

\bibitem{oldham-spanier}
Oldham, K.B., Spanier, J., (1974), The Fractional Calculus, Academic Press, New York-London.

\bibitem{osler}
Osler, T.J., (1970), Leibniz rule for fractional derivatives generalized and an application to infinite series, SIAM J. Appl. Math. 18 (3), pp. 658--674.

\bibitem{oumarou-fahad-djida-fernandez}
Oumarou, C.M.S., Fahad, H.M., Djida, J.D., Fernandez, A., (2021), On fractional calculus with analytic kernels with respect to functions, Comput. Appl. Math. 40, 244.

\bibitem{samko-kilbas-marichev}
Samko, S.G., Kilbas, A.A., Marichev, O.I., (1993), Fractional Integrals and Derivatives: Theory and Applications, Gordon and Breach Science Publ., New York-London.

\bibitem{teodoro-machado-oliveira}
Teodoro, G.S., Machado, J.T., de Oliveira, E.C., (2019), A review of definitions of fractional derivatives and other operators, J. Comput. Phys. 388, pp. 195--208.

\bibitem{zaky-hendy-suragan}
Zaky, M.A., Hendy, A.S., Suragan, D., (2022), A note on a class of Caputo fractional differential equations with respect to another function, Math. Comp. Simul., accepted.



\end{thebibliography}
\end{document}